\documentclass[11pt]{amsart}

\usepackage{etex}
\usepackage{amsmath, amssymb}
\usepackage[frame,cmtip,arrow,matrix,line,graph,curve]{xy}
\usepackage{graphpap, color, paralist, pstricks}
\usepackage[mathscr]{eucal}
\usepackage[pdftex]{graphicx}
\usepackage[pdftex,colorlinks,backref=page,citecolor=blue]{hyperref}
\usepackage{tikz}

\setlength{\oddsidemargin}{0in}
\setlength{\evensidemargin}{0in}
\setlength{\marginparwidth}{0in}
\setlength{\marginparsep}{0in}
\setlength{\marginparpush}{0in}
\setlength{\topmargin}{0in}
\setlength{\headsep}{8pt}
\setlength{\footskip}{.3in}
\setlength{\textheight}{9.2in}
\setlength{\textwidth}{6.5in}
\setlength{\parskip}{4pt}
\linespread{1.2}

\newtheorem{theorem}{Theorem}[section]
\newtheorem{proposition}[theorem]{Proposition}
\newtheorem{corollary}[theorem]{Corollary}
\newtheorem{lemma}[theorem]{Lemma}

\newtheorem{conjecture}[theorem]{Conjecture}
\newtheorem{problem}[theorem]{Problem}

\theoremstyle{definition}
\newtheorem{definition}[theorem]{Definition}

\newtheorem{remark}[theorem]{Remark}

\newcommand{\PP}{\mathbb{P}}
\newcommand{\QQ}{\mathbb{Q}}
\newcommand{\CC}{\mathbb{C}}
\newcommand{\RR}{\mathbb{R}}
\newcommand{\ZZ}{\mathbb{Z}}

\newcommand{\HH}{\mathbb{H}}

\newcommand{\cO}{\mathcal{O} }
\newcommand{\cA}{\mathcal{A} }

\newcommand{\cC}{\mathcal{C} }
\newcommand{\cE}{\mathcal{E} }
\newcommand{\cF}{\mathcal{F} }
\newcommand{\cG}{\mathcal{G} }

\newcommand{\cM}{\mathcal{M} }
\newcommand{\cN}{\mathcal{N} }
\newcommand{\cL}{\mathcal{L} }
\newcommand{\cQ}{\mathcal{Q} }
\newcommand{\cS}{\mathcal{S} }
\newcommand{\cT}{\mathcal{T} }
\newcommand{\cU}{\mathcal{U} }
\newcommand{\cW}{\mathcal{W} }
\newcommand{\cZ}{\mathcal{Z} }

\newcommand{\rA}{\mathrm{A} }
\newcommand{\rH}{\mathrm{H} }
\newcommand{\rM}{\mathrm{M} }
\newcommand{\rN}{\mathrm{N} }
\newcommand{\rQ}{\mathrm{Q} }

\newcommand{\proj}{\mathrm{Proj}\;}

\newcommand{\cHom}{\mathcal{H}om}
\newcommand{\cEnd}{\mathcal{E}nd}

\def\Hom{\mathrm{Hom} }
\def\Ext{\mathrm{Ext} }
\def\cExt{\mathcal{E}xt }

\newcommand{\rank}{\mathrm{rank}\, }

\def\GL{\mathrm{GL}}
\def\git{/\!/ }

\def\lr{\rightarrow}

\def\bH{\mathrm{H}}
\def\bM{\mathrm{M}}
\def\bN{\mathrm{N}}

\newcommand{\ses}[3]{0\lr{#1}\lr{#2}\lr{#3}\lr 0}

\hypersetup{%
pdftitle={Chow ring of $\mathrm{M}(4,1)$},%
pdfauthor={Kiryong Chung, Han-Bom Moon},%
pdfkeywords={moduli of sheaves, minimal model program, moduli space},%
citecolor=blue,%
linkcolor=blue,%
}

\begin{document}

\title{Chow ring of the moduli space of stable sheaves supported on quartic curves}

\author{Kiryong Chung}
\address{School of Mathematics, Korea Institute for Advanced Study, Seoul 130-722, Korea}
\email{krjung@kias.re.kr}

\author{Han-Bom Moon}
\address{Department of Mathematics, Fordham University, Bronx, NY 10458}
\email{hmoon8@fordham.edu}

\keywords{Chow ring; Moduli space of sheaves; Blowing-up/down; Euler charateristic}
\subjclass[2010]{14C15, 14E30, 14N35, 14F05.}

\begin{abstract}
Motivated by the computation of the BPS-invariants on a local Calabi-Yau threefold suggested by S. Katz, we compute the Chow ring and the cohomology ring of the moduli space of stable sheaves of Hilbert polynomial $4m+1$ on the projective plane. As a byproduct, we obtain the total Chern class and Euler characteristics of all line bundles, which provide a numerical data for the strange duality on the plane. 
\end{abstract}

\maketitle


\section{Introduction and results}
\subsection{Introduction}
For a smooth projective variety $X$, let $\rM(X, P(m))$ be the moduli space of (Gieseker-Mumford) semistable sheaves on $X$ with Hilbert polynomial $P(m)$. Since its construction in the monumental paper \cite{Sim94}, the geometry of $\rM(X, P(m))$ has been actively studied by many algebraic geometers in various contexts. In this paper, we will study the case that $X = \PP^{2}$ and $P(m)$ is a linear polynomial. 

The geometry of $\rM(d, \chi) := \rM(\PP^{2}, dm+\chi)$ is studied at least three different directions: birational geometry, curve counting theory, and strange duality. In a group of birational geometers, the log minimal model program of $\rM(d,\chi)$ is actively studied with a connection with the notion of moduli spaces of objects in the derived category of coherent sheaves (\cite{ABCH13,BMW14,CHW14,Liu15,Woo13}). The key ingredients are that one can identify the moduli space of semistable sheaves with the moduli space of Bridgeland stable objects in $\mathrm{D}^b(\PP^2)$ with a proper stability condition, and by varying the stability condition, one can obtain new moduli spaces which are birational to $\rM(d, \chi)$. 

On the other hand, the virtual curve counting theory on Calabi-Yau (CY) threefolds focuses on the computation of certain topological invariants of the space $\rM(d,1)$. By S. Katz (\cite{Kat08}), it was suggested that genus zero BPS-invariants of the total space of the canonical line bundle $K_{\PP^{2}}$ with the canonical polarization are given by Donaldson-Thomas invariants ($=$ the top Chern class of the cotangent bundle) of $\rM(d, 1)$. More generally, there have been several mathematical proposals of the definition of BPS-invariants in \cite{HST01, Kat08, KL12, CKK14} in terms of moduli spaces of sheaves. The study of the space $\rM(d, 1)$ and its cohomology ring provides an explicit and computable example of these theories in a local CY case. 

Finally, in the study of the strange duality, we are interested in the isomorphism between the space of theta divisors in two different moduli spaces of sheaves. The case of moduli spaces of bundles on a curve was established in \cite{Bel08a, MO07}. The theory for surfaces, in particular $\PP^{2}$, was initiated by Le Potier, and there have been some partial progress for instance \cite{Dan00, Dan02, Yua12b}. Generating functions of theta divisors has been studied as generalized K-theoretic Donaldson invariants by G\"ottsche, Nakajima and Yoshioka (\cite{GNY09}). By Grothendieck-Riemann-Roch theorem, studying the cohomology ring and Chern classes of $\rM(d, \chi)$ provides some numerical data for the strange duality. 

Of course, the study of the geometric/topological structure of moduli spaces is itself an interesting problem. In addition, with the motivations from curve counting theory and study of theta divisors, the following geometric question arises naturally.

\begin{problem}
Study the cohomology ring structure of the moduli space $\rM(d,\chi)$.\end{problem}

In \cite{Mar07}, Markman proved that the cohomology ring $\rH^*(\rM(d,\chi))$ is generated by Chern classes of K\"unneth factors of a universal family on $\rM(d,\chi)$, which is similar to the case of moduli spaces of vector bundles on curves. Also he showed that the Chow ring and the cohomology ring are isomorphic. But the lack of general tools dealing with the moduli space $\rM(d,\chi)$ leads us to solve the problem separately for each $d$ and $\chi$. 

In this paper, we study $\rM(d,1)$. In this case the semistability coincides with the stability. If $d=1,2$, then $\rM(d,1) \cong |\cO_{\PP^2}(d)|=\PP^{d(d+3)/2}$ (\cite[Theorem 3.5]{LP93a}). If $d=3$, $\rM(3,1)\cong \cC_3$, the universal cubic plane curve (\cite[Theorem 5.1]{LP93a}). For the cohomology ring ($=$ Chow ring) of $\cC_{3}$, see Section \ref{ssec:univcurve}. Thus the first non-trivial case is $d=4$. 

\subsection{Main result}

The main result of this paper is the following:
\begin{theorem}[\protect{Theorem \ref{thm:ChowM}}]\label{thm:ChowMintro}
Let $\rM := \rM(4,1)$ be the moduli space of stable sheaves with Hilbert polynomial $4m+1$. Then, the Chow ring of $\rM$ is 
\[
\begin{split}
\rA^{*}(\rM) \cong \;& \QQ[\alpha, \beta, x, y, z]/\langle
xz-yz,\beta^2 z-3yz-9z^2,3\alpha^2 z-\alpha\beta z+yz,\beta^2 y-3y^2-9yz,\\
&\beta^2 x-xy-3y^2-3\alpha \beta z-9yz+9z^2, \beta^4+3x^2-9xy-3y^2-54yz-81z^2,\\
&\beta yz+9\alpha z^2-3\beta z^2, 2\beta xy-3\beta y^2-9\alpha yz-27\alpha z^2+9\beta z^2,3\beta x^2-7\beta y^2-36\alpha yz\\
&-108\alpha z^2+36\beta z^2,
\alpha^{12}+3\alpha^{11}\beta+3\alpha^{10}(\beta^2+2x-y)
+\alpha^9(-\beta^3+12\beta x+2\beta y)\\
&+3\alpha^{8}(9x^{2}-16xy+27y^{2})+28\alpha^7\beta y^2
+56\alpha^6 y^3+201\alpha\beta z^5-19yz^5-613z^6,\\
& 6\alpha^{10}xy-12\alpha^{10}y^2-10\alpha^9 \beta y^2-45\alpha^8 y^3-104\alpha\beta z^6+2yz^6+310z^7\rangle
\end{split}
\]
where $\alpha, \beta$ are of degree 1 and $x, y, z$ are of degree 2. 
\end{theorem}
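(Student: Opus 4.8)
The plan is to compute the Chow ring by exploiting an explicit birational description of $\rM=\rM(4,1)$ in terms of simpler spaces, following the general philosophy that $\rM(d,\chi)$ can be built up from projective bundles and blow-ups along smooth centers. My first step would be to identify a sequence of elementary birational modifications (blow-ups and blow-downs along smooth loci) connecting $\rM$ to a space whose Chow ring is already understood, such as a projective bundle over a Hilbert scheme or over the universal quartic curve $\cC_4 \subset |\cO_{\PP^2}(4)|$. Since a generic stable sheaf with Hilbert polynomial $4m+1$ is of the form $\cO_C(\text{pt})$ for a smooth quartic curve $C$, the space $\rM$ should be birational to (a bundle over) $|\cO_{\PP^2}(4)| = \PP^{14}$, and the locus where the sheaf fails to have this form should be stratified by the degree of the one-dimensional support. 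The generators $\alpha,\beta$ of degree $1$ and $x,y,z$ of degree $2$ will arise as Chern classes of the Künneth factors of a universal family, as guaranteed by Markman's theorem cited above.

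The key steps, in order, would be: (i) establish the explicit chain of blow-ups/blow-downs $\rM \dashrightarrow \cdots \dashrightarrow Y$ with $Y$ a space of known Chow ring, carefully identifying each exceptional divisor and each smooth center (these centers should themselves be bundles over lower-degree moduli spaces $\rM(d',\chi')$, which are inductively understood); (ii) apply the blow-up formula for Chow rings (as in Keel's presentation) at each stage, tracking how the generators and relations transform, to obtain a presentation of $\rA^*(\rM)$ in terms of $\alpha,\beta,x,y,z$; (iii) identify the relations by combining the pullback relations from $Y$, the relations coming from the exceptional divisors, and the relations among Chern classes of the universal family; and (iv) verify that the listed ideal is complete by a dimensional/Poincaré-polynomial count, checking that the graded pieces of $\QQ[\alpha,\beta,x,y,z]/I$ have the correct Betti numbers, which can be computed independently from the stratification via the motivic/virtual Poincaré polynomial or the Bia{\l}ynicki-Birula decomposition associated to a $\CC^*$-action on $\rM$.

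The main obstacle will be step (iv): proving that the explicitly listed relations generate the \emph{entire} ideal of relations, rather than merely a subideal. Establishing the low-degree relations (those of degree up to, say, $6$) is a matter of direct intersection-theoretic computation on the successive blow-ups, but the two high-degree relations of degrees $12$ and $14$ (the final two generators, involving $\alpha^{12}$ and $\alpha^{10}xy$) are genuinely delicate: they encode the vanishing that forces $\rA^*(\rM)$ to be finite-dimensional of the correct total dimension and to satisfy Poincaré duality with the top class in the expected degree ($\dim \rM = 17$, so $\rA^*$ is concentrated in degrees $0$ through $17$). To pin these down I would compute the Poincaré polynomial of $\rM$ independently—via the wall-crossing/Harder--Narasimhan stratification or the $\CC^*$-fixed-point data—and then show, by a Gröbner-basis or Hilbert-series argument, that adjoining precisely these two top relations to the lower-degree ideal yields a quotient ring with Hilbert function matching the computed Betti numbers. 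The consistency of the Hilbert series of $\QQ[\alpha,\beta,x,y,z]/I$ with the topologically computed Poincaré polynomial is both the crux of the proof and the final certificate of correctness.
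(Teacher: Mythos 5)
Your overall strategy --- connect $\rM$ to a space of known Chow ring by smooth blow-ups/blow-downs, apply Keel's blow-up formula, and certify completeness of the relations by matching the Hilbert series of the presented ring against an independently computed Poincar\'e polynomial --- is the strategy of the paper. The paper's chain is $\rN(3;2,3) \Leftarrow \rQ=\PP(\cU) \leftarrow \rM^+ \rightarrow \rM$, where $\rN$ is the Kronecker quiver moduli space with Chow ring known from Ellingsrud--Str\o mme, $\rQ$ is a $\PP^{11}$-bundle over it, and $\rM^+$ is the moduli space of $(+)$-stable pairs; and the final consistency check against $P_t(\rM)$ from the stratification is exactly your step (iv). However, there are two concrete gaps in your plan as written.

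First, your step (ii) fails at the last stage. The space $\rM$ is not a blow-up of anything in the chain; rather, $r:\rM^+\to\rM$ is a blow-\emph{down} (along the Brill--Noether locus $\cC_4$, not along a bundle over a lower-degree moduli space as you guessed). Keel's formula computes $\rA^*(\widetilde Y)$ from $\rA^*(Y)$, not the reverse, so after computing $\rA^*(\rM^+)$ you still have to \emph{descend}. The paper does this by noting $r^*$ is injective with $r_*r^*=\mathrm{id}$ and then identifying the image $r^*(\rA^*(\rM))\subset\rA^*(\rM^+)$ via the orthogonality criterion $f^*(\rA_i(X))=(j_*(\ker g_*)_{n-i})^\perp$ of Avritzer--Vainsencher; one must solve explicit linear systems to find elements such as $\alpha=\rho+\tau$, $z=\tau(\tau+\rho+b_1/3)$ lying in the image, and only then does the Hilbert-series comparison certify that these generate all of $r^*(\rA^*(\rM))$. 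Without this descent mechanism your presentation of $\rA^*(\rM)$ cannot be extracted from that of $\rM^+$. Second, you treat the existence of the chain itself as routine bookkeeping, but establishing that the flip $\rM\dashrightarrow\rQ$ factors through a single smooth master space --- and that this master space is $\rM^+$, with $q:\rM^+\to\rQ$ a smooth blow-down along $\mathrm{Fl}(V)$ --- is the main geometric content of the paper (it requires the elementary modification of pairs to construct $q$, a Bridgeland wall-crossing analysis to identify the flipping loci, and the Fujiki--Nakano criterion to verify smoothness of the contraction). Your appeal to Markman's generation theorem and to a stratification ``by the degree of the support'' does not substitute for this: every sheaf in $\rM(4,1)$ has support of degree $4$, and the relevant stratification is by $\dim\rH^0(F)$.
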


In addition, we describe (some linear combinations of) generators in terms of Chern classes and some natural effective cycles in Section \ref{sec:Cherneff}. A general element $F \in \rM$ has a unique nonzero section $\cO_{\PP^{2}}\stackrel{s}{\to}F$ up to a scalar multiplication, and its cokernel $Q_{F}$ has a finite support. Let $O$ be the closure of the locus of $F \in \rM$ such that $Q_{F}$ contains a fixed point in $\PP^{2}$. Let $L$ be the closure of the locus of $F \in \rM$ such that $Q_{F}$ meets a fixed line. Let $S$ be the locus of $F \in \rM$ such that $\mathrm{supp}(F)$ contains a fixed point. Finally, let $\cC_{4} = \{F \in \rM\;|\; \mathrm{dim}\;\rH^{0}(F) = 2\}$ be the Brill-Noether locus (the notation will be justified in Section \ref{sec:Cherneff}). These natural effective loci are described in terms of above generators explicitly. 
\begin{proposition}[\protect{Propositions \ref{prop:BNlocus}, \ref{prop:alphaclass}, and \ref{prop:relation}}]
\[
	S = \alpha, \; L = -\beta, \;O = x - y, \;\cC_{4} = z.
\]
\end{proposition}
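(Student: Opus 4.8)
The plan is to prove the four identities separately, in each case realizing the geometric locus as a tautological class attached to an appropriate universal object on $\rM \times \PP^{2}$. Throughout I would write $\pi_{\rM}$ and $\pi_{\PP^{2}}$ for the two projections, let $\cF$ be a universal sheaf (which exists since $\gcd(4,1)=1$), let $h\in \rA^{1}(\PP^{2})$ be the hyperplane class, and use that a general $F$ is a degree-$3$ line bundle on a smooth quartic with $\mathrm{h}^{0}(F)=1$, so that the exact sequence $\cO_{\PP^{2}}\stackrel{s}{\to}F$ has a finite cokernel $Q_{F}$ of length $3$. The first step is to recall, from the definitions given earlier, that $\alpha,\beta$ are the two independent degree-$1$ slant products $\pi_{\rM *}\big(\mathrm{ch}(\cF)\cdot \pi_{\PP^{2}}^{*}h^{j}\big)$ and that $x,y,z$ are the degree-$2$ slant products, so that the whole proposition reduces to rewriting each effective locus in this slant-product basis and simplifying modulo the relations of $\rA^{*}(\rM)$.

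For $S=\alpha$ I would use the support morphism $\sigma\colon \rM \to |\cO_{\PP^{2}}(4)|=\PP^{14}$ sending $F$ to $\mathrm{supp}(F)=\det M$, where $M$ is the Kronecker presentation $\ses{\cO(-3)\oplus\cO(-1)}{\cO^{2}}{F}$. Since $\{C : p\in C\}$ is a hyperplane in $\PP^{14}$, we have $S=\sigma^{*}\cO_{\PP^{14}}(1)$, and the computation is then to check that this pullback agrees with the generator $\alpha$, which follows by comparing $\sigma^{*}\cO(1)$ with the first-Chern-class slant product defining $\alpha$. For $\cC_{4}=z$ I would realize the Brill--Noether locus as a degeneracy locus: $R\pi_{\rM *}\cF$ is represented by a two-term complex $[E_{0}\to E_{1}]$ of vector bundles with $\rank E_{0}-\rank E_{1}=\chi=1$, and $\cC_{4}=\{\dim \rH^{0}(F)=2\}$ is precisely where this map drops rank by one, a determinantal locus of expected codimension $2$. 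Thom--Porteous then expresses $[\cC_{4}]$ as a degree-$2$ polynomial in the Chern classes of $E_{1}-E_{0}$, i.e.\ in the slant products, and reducing this polynomial modulo the relations already established for $\rA^{*}(\rM)$ should collapse it to $z$.

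For $O=x-y$ and $L=-\beta$ I would work with the universal cokernel. Away from $\cC_{4}$ the sheaf $\cL:=\pi_{\rM *}\cF$ is a line bundle, the evaluation map $\pi_{\rM}^{*}\cL\to\cF$ has cokernel $\cQ$, and twisting by $\pi_{\rM}^{*}\cL^{-1}$ gives $\ses{\cO}{\cF\otimes\pi_{\rM}^{*}\cL^{-1}}{\cQ\otimes\pi_{\rM}^{*}\cL^{-1}}$, so that $\mathrm{ch}(\cQ\otimes\pi_{\rM}^{*}\cL^{-1})=\mathrm{ch}(\cF\otimes\pi_{\rM}^{*}\cL^{-1})-1$ is computable from $\mathrm{ch}(\cF)$. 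Because $\cQ$ is fiberwise $0$-dimensional its Chern character starts in codimension $2$, and the two loci become the slant products $O=\pi_{\rM *}\big(\mathrm{ch}(\cQ)\cdot \pi_{\PP^{2}}^{*}h^{2}\big)$ in codimension $2$ and $L=\pi_{\rM *}\big(\mathrm{ch}(\cQ)\cdot \pi_{\PP^{2}}^{*}h\big)$ in codimension $1$. Expanding each into the $\{\alpha,\beta,x,y,z\}$ basis (tracking the $\cL^{-1}$ twist) and simplifying with the ring relations should yield $O=x-y$ and $L=-\beta$.

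The main obstacle will be the bookkeeping that connects $\mathrm{ch}(\cQ)$ and $R\pi_{\rM *}\cF$ to the specific generators, since it requires the explicit K\"unneth decomposition of $\mathrm{ch}(\cF)$ together with the ring relations to turn an a priori unwieldy polynomial into the clean answers $\alpha,\,-\beta,\,x-y,\,z$; in particular the Thom--Porteous class for $\cC_{4}$ and the degree-$2$ slant product for $O$ will each produce several terms that only collapse after applying the quadratic relations $xz-yz$, $\beta^{2}z-3yz-9z^{2}$, and the rest listed in Theorem~\ref{thm:ChowMintro}. A secondary point to verify is that each locus is generically reduced of the expected dimension, so that the scheme-theoretic degeneracy or slant-product class genuinely equals the fundamental class of the geometric cycle rather than a positive multiple of it.
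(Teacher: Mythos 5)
Your strategy rests on a premise that does not hold in this paper: you assert that $\alpha,\beta$ and $x,y,z$ are ``the degree-$1$ and degree-$2$ slant products $\pi_{\rM *}(\mathrm{ch}(\cF)\cdot\pi_{\PP^2}^*h^j)$,'' and then reduce everything to rewriting each locus in that basis. But the generators are not defined that way. They are defined through the chain $\rM\stackrel{r}{\leftarrow}\rM^+\stackrel{q}{\to}\rQ\stackrel{p}{\to}\rN$ by $r^*(\alpha)=\rho+\tau$, $r^*(\beta)=b_1+3\tau$, $r^*(x)=b_2-(3\rho-b_1)\tau$, $r^*(y)=d_2-(3\rho-b_1)\tau$, $r^*(z)=\tau(\tau+\rho+b_1/3)$, where $b_i,d_k$ are Chern classes of the universal bundles on the Kronecker moduli space $\rN$ and $\rho,\tau$ are the relative hyperplane and exceptional classes. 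The paper never computes $\mathrm{ch}(\cF)$ for a universal sheaf on $\rM\times\PP^2$, and the change of basis between your slant products and these generators is precisely the content you would need to supply; as written, every one of your four arguments ends with ``expand in the $\{\alpha,\beta,x,y,z\}$ basis and simplify,'' which is the step that is missing. The same gap affects $S=\alpha$: your identification $S=\sigma^*\cO_{\PP^{14}}(1)$ is correct, but the paper closes the argument by a different route, namely $-K_{\rM}=12\pi^*\cO_{|\cO_{\PP^2}(4)|}(1)$ (Woolf) combined with the blow-up computation $r^*c_1(\cT_{\rM})=12(\tau+\rho)=12\alpha$, not by comparison with a slant product.

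For the individual loci the paper's methods are also structurally different, and seeing them may help you repair the argument. For $\cC_4=z$ it does not use Thom--Porteous at all: since $r:\rM^+\to\rM$ is the blow-up along the codimension-two center $\cC_4$, the class $r^*[\cC_4]$ is computed directly by the excess-intersection formula $r^*[\cC_4]=i_*\bigl(c_1({r'}^*\cN_{\cC_4/\rM})+c_1(\cO_{\PP\cN}(1))\bigr)$, which evaluates to $\tau(\tau+\rho+\tfrac{1}{3}b_1)=z$; your Thom--Porteous route would additionally require a two-term resolution of $R\pi_{\rM*}\cF$ with known Chern classes, i.e.\ again $\mathrm{ch}(\cF)$. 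For $L=-\beta$ and $O=x-y$ the paper works on the Hilbert scheme $\rH(3)$ and the contraction $t:\rH(3)\to\rN$: adjunction gives $t^*(-b_1)=H+A$ (so $-b_1$ is the ``support meets a fixed line'' divisor, whence $L=-\beta$), and a Schubert-calculus computation of $t^*(\sigma_2)$ against the Elencwajg--Le Barz intersection table pins down $t^*(b_2)$ and $t^*(d_2)$, yielding $b_2-d_2=t_*(h)$ and hence $O=x-y$. Your universal-cokernel computation is a plausible alternative in principle, but besides the missing dictionary it is only set up away from $\cC_4\cup r(W)$, so you would also need to control the contribution of the closure. As it stands the proposal identifies the right geometric objects but defers the entire proof to an unestablished identification, so it is not a complete argument.
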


The total Chern class is obtained as a byproduct. For the entire formula, see Proposition \ref{prop:Chernclass}. 
\begin{proposition}[\protect{Proposition \ref{prop:Chernclass}}]
The total Chern class of $\cT_{\rM}$ is described in terms of above generators. For instance, 
\[
	c_{1}(\cT_{\rM}) = 12\alpha, \;
	c_{2}(\cT_{\rM}) = 66\alpha^{2}-3\alpha\beta -3\beta^{2}+
	6x+2y+34z, \cdots.
\]
\end{proposition}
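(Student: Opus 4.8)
The plan is to realize the tangent bundle $\cT_{\rM}$ as a relative $\cExt$-sheaf of the universal family and then compute its Chern character by Grothendieck--Riemann--Roch, finally converting the Chern character into the total Chern class. Since $\gcd(4,1)=1$, stability equals semistability and there exists a universal sheaf $\cF$ on $\rM\times\PP^2$; let $\pi\colon \rM\times\PP^2\to\rM$ be the projection. For a stable one-dimensional sheaf $F$ on $\PP^2$ one has $\Hom(F,F)=\CC$ and, by Serre duality, $\Ext^2(F,F)\cong\Hom(F,F\otimes K_{\PP^2})^{\vee}=0$, so $\rM$ is smooth with $\cT_{\rM,[F]}=\Ext^1(F,F)$. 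Globally this gives $\cT_{\rM}\cong\cExt^1_{\pi}(\cF,\cF)$ together with $\cExt^0_{\pi}(\cF,\cF)=\cO_{\rM}$ and $\cExt^2_{\pi}(\cF,\cF)=0$, so in $K(\rM)$
\[
[\cT_{\rM}] = [\cO_{\rM}] - \pi_{!}\,R\cHom(\cF,\cF) = [\cO_{\rM}] - \pi_{!}\bigl([\cF]^{\vee}\cdot[\cF]\bigr).
\]
One should note that $\cExt^1_{\pi}(\cF\otimes\pi^{*}L,\cF\otimes\pi^{*}L)=\cExt^1_{\pi}(\cF,\cF)$, so the twist ambiguity in the choice of universal sheaf does not affect $\cT_{\rM}$.

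Next I would apply Grothendieck--Riemann--Roch to $\pi$, whose relative tangent bundle is $\pi_{\PP^2}^{*}T_{\PP^2}$:
\[
\mathrm{ch}(\cT_{\rM}) = 1 - \pi_{*}\Bigl(\mathrm{ch}(\cF)^{\vee}\,\mathrm{ch}(\cF)\,\mathrm{td}(\pi_{\PP^2}^{*}T_{\PP^2})\Bigr).
\]
To evaluate the right-hand side I would write the K\"unneth decomposition of $\mathrm{ch}(\cF)$ in $\rA^{*}(\rM)\otimes\rA^{*}(\PP^2)$, expressing each component in terms of the generators $\alpha,\beta,x,y,z$ and the hyperplane class $h\in\rA^1(\PP^2)$. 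This is precisely where the results of Section \ref{sec:Cherneff} enter: the effective cycles $S,L,O,\cC_4$ are the classes through which the universal sheaf is controlled, so the low K\"unneth pieces of $\mathrm{ch}(\cF)$ (the support class, the cokernel-locus class, and so on) are readable off in these generators. Multiplying by $\mathrm{td}(T_{\PP^2})=1+\tfrac{3}{2}h+h^{2}$ and integrating over the $\PP^2$-factor (extracting the coefficient of $h^{2}$) then yields $\mathrm{ch}(\cT_{\rM})$ as an explicit polynomial in $\alpha,\beta,x,y,z$.

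Finally I would convert the Chern character into the total Chern class via Newton's identities, working degree by degree and reducing modulo the defining relations of $\rA^{*}(\rM)$ from Theorem \ref{thm:ChowMintro} at each stage. In the lowest degrees this produces $c_1(\cT_{\rM})=12\alpha$ and $c_2(\cT_{\rM})=66\alpha^{2}-3\alpha\beta-3\beta^{2}+6x+2y+34z$, and iterating gives all higher $c_k$ up to $k=17=\dim\rM$; useful internal checks are that $c_k=0$ for $k>17$ and that the integral of $c_{17}$ recovers the topological Euler characteristic.

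I expect the main obstacle to be the determination of $\mathrm{ch}(\cF)$ itself. The universal object is not locally free but a sheaf supported on the universal quartic, so its higher K\"unneth components require either a locally free resolution of $\cF$ on $\rM\times\PP^2$ or an analysis through the canonical section $\cO_{\PP^2}\to F$ and its finite-length cokernel $Q_F$, after which the resulting classes must be matched with $\alpha,\beta,x,y,z$. Once $\mathrm{ch}(\cF)$ is pinned down, the remaining pushforward and the Chern-character-to-Chern-class bookkeeping are mechanical though lengthy, and are most safely carried out as a reduction modulo the ideal of relations using computer algebra.
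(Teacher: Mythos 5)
Your overall framework is sound as far as it goes: $\rM$ is smooth with $\cT_{\rM}\cong\cExt^1_{\pi}(\cF,\cF)$, $\cExt^0_{\pi}(\cF,\cF)=\cO_{\rM}$ and $\cExt^2_{\pi}(\cF,\cF)=0$, so the K-theoretic identity and the GRR formula you write down are correct, and the twist-independence remark is the right thing to check. But the proof has a genuine gap exactly where you flag "the main obstacle": you never determine $\mathrm{ch}(\cF)$, and this is not a routine step that can be deferred. To extract $c_k(\cT_{\rM})$ for $k$ up to $17$ you need the K\"unneth components of $\mathrm{ch}(\cF)$ in $\rA^{i}(\rM)\otimes\rA^{j}(\PP^2)$ for essentially all $i$ (up to $i=19-j$), not just the low ones. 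The effective cycles $S$, $L$, $O$, $\cC_4$ and the determinant line bundles pin down only the degree-one K\"unneth pieces; Markman's theorem guarantees that the K\"unneth factors \emph{generate} $\rA^{*}(\rM)$ but provides no closed formula for them. Moreover $\cF$ is a torsion sheaf supported on the pullback of the universal quartic, so one needs a global locally free resolution on $\rM\times\PP^2$, and the natural candidate built from the section $\cO_{\PP^2}\to F$ degenerates precisely on the Brill--Noether locus $\cC_4$ where $\dim\rH^0(F)=2$. Writing $\mathrm{ch}(\cF)$ in the generators $\alpha,\beta,x,y,z$ is therefore at least as hard as the statement you are trying to prove, and your argument reduces to it without solving it.

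For comparison, the paper avoids the universal sheaf entirely and transports Chern classes through the explicit birational geometry: $c(\cT_{\rN})$ comes from the tautological four-term sequence on the quiver moduli space, $c(\cT_{\rQ})$ from the relative Euler sequence of $\rQ=\PP(\cU)\to\rN$, $c(\cT_{\rM^{+}})$ from Fulton's blow-up formula (\cite[Theorem 15.4]{Ful98}) applied to $q:\rM^{+}\to\rQ$, and finally $r^{*}c(\cT_{\rM})$ from the same formula run backwards along $r:\rM^{+}\to\rM$, which requires only $c_1$, $c_2$ of $\cN_{\cC_4/\rM}$; these are obtained from $r^{*}[\cC_4]=z$ and the universal-curve computation of Section \ref{ssec:univcurve}, and the answer is then rewritten in $\alpha,\beta,x,y,z$ using the injectivity of $r^{*}$. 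If you want to salvage your route, you would first have to produce $\mathrm{ch}(\cF)$ explicitly --- for instance by pulling a known resolution back through the master space $\rM^{+}$ --- at which point you have essentially reconstructed the paper's machinery anyway.
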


It is straightforward to check that $S = c_{1}(\cT_{\rM})/12, c_{2}(\cT_{\rM}), L, O, \cC_{4}$ generate $\rA^{*}(\rM)$. However, the presentation using this generating set is revealed to be very complicate. Thus we decided to use the above presentation.

The Chern class computation enables us to compute Euler characteristics (generalized K-theoretic Donaldson numbers) of line bundles on $\rM$. We present some of Euler characteristics of determinant line bundles $\lambda(w) \in \mathrm{Pic}(\rM)$ of $w\in [F]^\perp \subset \mathrm{K}(\PP^2)$ for $F\in \rM$. For more details, see Section \ref{sec:Eulerchar}.

\subsection{Idea of proof}

The main idea of the proof of Theorem \ref{thm:ChowMintro} is to make a sequence of moduli spaces which are connected by standard morphisms such as projection and blow-up/down. Then we can apply standard formulas for the computation of Chow ring (\cite{Ful98}). We may summarize relations between moduli spaces appearing in this paper as the following diagram. 
\[
	\xymatrix{\rN&\bH(3)\ar[l]_{t}&\rM^{\infty}\ar@{=>}[l]
	\ar@{~>}[d]\ar@{<-->}[ld]_{\beta}\\
	\rQ \ar@{=>}[u]_{p}&\rM^{+}\ar[l]_{q} \ar[d]^{r}\ar@{~>}[r]
	&\rM^{3}\ar@{=>}[d]\\
	&\rM \ar@{<-->}[lu]^{\gamma} \ar@{=>}[r] 
	&|\cO_{\PP^{2}}(4)|}
\]
The space $\rN = \rN(3;2,3)$ is the moduli space of Kronecker quiver representations (Section \ref{sub:defq}), $\rQ$ is a projective bundle over $\rN$ with 11 dimensional fiber, and $\rH(3)$ is the Hilbert scheme of 3 points on $\PP^{2}$. The space $\rM^{\alpha} := \rM^{\alpha}(\PP^{2}, 4m+1)$ is the moduli space of $\alpha$-stable pairs with Hilbert polynomial $4m+1$ in $\PP^{2}$ (Section \ref{ssec:moduliofpairs}). An ordinary arrow $\to$ refers a smooth blow-up, a double arrow $\Rightarrow$ is a fibration with positive dimensional fibers, and a wiggled arrow $\leadsto$ refers a small contraction, and finally, a dashed arrow $\dashrightarrow$ is a flip. 

Many of these morphisms can be explained in a broader theoretical context. For instance, $\rM^{\infty}(\PP^{2}, dm+\chi)$ is known to be isomorphic to the relative Hilbert scheme of $\left(\chi-\frac{d(3-d)}{2}\right)$-points on the universal degree $d$ plane curve (\cite[Section 4.4]{He98}). The flip $\beta$ is a wall-crossing of $\alpha$-stable pairs (Section \ref{ssec:moduliofpairs}). The other flip $\gamma$ initially came from the log minimal model program (MMP) of $\rM$, but it turns out that it is a wall-crossing in the sense of Bridgeland (Section \ref{ssec:Bridgeland}). The key technical result, which connects two different flips $\beta$ and $\gamma$ is that the `master' space of the flip $\gamma$ is indeed the moduli space of $(+)$-stable pairs $\rM^{+}$. 
\begin{theorem}[\protect{Theorem \ref{thm:mainprop}}]\label{thm:mainpropintro}
\begin{enumerate}
\item There is a flip $\gamma$ between $\rM$ and $\rQ$. 
\item The flip in (1) can be decomposed into a smooth blow-up and a smooth blow-down, 
\[
	\xymatrix{&\rM^+\ar[dr]^{q}\ar[ld]_{r}&
	\\ \rM\ar@{<-->}[rr]^{\gamma}&&\rQ}
\]
and the intermediate master space is $\rM^{+}$.
\end{enumerate}
\end{theorem}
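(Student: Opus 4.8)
The plan is to produce the master space directly as the moduli space $\rM^{+}$ of $(+)$-stable pairs and to exhibit the two arrows $r$ and $q$ as blow-downs of one and the same exceptional divisor, so that $\gamma=q\circ r^{-1}$ is forced to be a small modification. As a preliminary step I would record, via the deformation theory of pairs---with tangent and obstruction spaces given by the hyper-$\Ext$ groups of the two-term complex $[\cO_{\PP^{2}}\xrightarrow{s}F]$---that $\rM^{+}$ is a smooth projective variety of dimension $17$; this matches $\dim\rM=\dim\rQ=d^{2}+1=17$, the latter because $\rQ$ is a $\PP^{11}$-bundle over the $6$-dimensional Kronecker moduli space $\rN$.

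Next I would analyze the forgetful arrow $r\colon\rM^{+}\to\rM$, $(s,F)\mapsto F$. Since $(+)$-stability forces the underlying sheaf $F$ to be stable, $r$ is a well-defined morphism, and over the open locus where $h^{0}(F)=1$ the section $s$ is unique up to scalars, so $r$ is an isomorphism there. Over the Brill--Noether locus $\cC_{4}=\{F:\dim\rH^{0}(F)=2\}$ the fibre is $\PP(\rH^{0}(F))\cong\PP^{1}$. I would then check that $\cC_{4}$ is smooth of codimension $2$ in $\rM$---consistent with its class being the degree-two generator $z$ and with its identification as the universal quartic curve, which is a $\PP^{13}$-bundle over $\PP^{2}$ via the marked point---and compare pair-deformations with sheaf-deformations to identify $r$ with the blow-up $\mathrm{Bl}_{\cC_{4}}\rM$, whose exceptional divisor $E:=r^{-1}(\cC_{4})$ is the $\PP^{1}$-bundle $\PP(N_{\cC_{4}/\rM})$.

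The technical heart is the second arrow $q\colon\rM^{+}\to\rQ$. From a $(+)$-stable pair I would extract, by means of the Beilinson monad of $F$ on $\PP^{2}$, a Kronecker module of type $\rN(3;2,3)$ together with the extra linear datum recorded by the $\PP^{11}$-fibre of $p\colon\rQ\to\rN$, and this assignment defines $q$. I would prove that $q$ is an isomorphism away from a smooth center $\cC_{4}'\subset\rQ$ and realizes $\rM^{+}$ as $\mathrm{Bl}_{\cC_{4}'}\rQ$, with exceptional divisor \emph{equal} to $E$. The equality $E=r^{-1}(\cC_{4})=q^{-1}(\cC_{4}')$ says exactly that $q$ fails to be an isomorphism precisely where $r$ does; it is this coincidence of the two exceptional divisors that forces $\gamma$ to be an isomorphism away from loci of codimension $\geq 2$ on either side, i.e.\ a small map.

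Finally I would assemble the flip. The single divisor $E$ now carries two projective-bundle structures: the ruling $E\to\cC_{4}$ with fibre $\PP^{1}$, and a second ruling $E\to\cC_{4}'$ whose fibre I expect to be $\PP^{13}$ over a center $\cC_{4}'$ of codimension $14$, the two rulings being interchanged over the common base $\PP^{2}$ of marked points. Since $\codim(\cC_{4},\rM)=2\neq 14=\codim(\cC_{4}',\rQ)$, the map $\gamma$ is a genuine flip rather than a flop, and comparing the relative Picard groups (equivalently, the behaviour of the canonical class) across $r$ and $q$ fixes its direction; this yields both (1) and (2), with $\rM^{+}$ as the intermediate master space. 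I expect the principal obstacle to be the construction and fibre analysis of $q$: producing the Kronecker module from a pair in families, and proving scheme-theoretically that the exceptional locus of $q$ is exactly $E$, will require the explicit homological (Beilinson) description of the sheaves parametrized by $\rM(4,1)$ together with a careful matching of the two normal-bundle structures on $E$.
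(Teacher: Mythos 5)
Your overall architecture coincides with the paper's: the master space is $\rM^{+}$, the arrow $r$ is the blow-up of $\rM$ along the Brill--Noether locus $\cC_{4}$ (the paper quotes this from \cite{CC12} rather than reproving it), and $q$ contracts the same exceptional divisor $E=\mathrm{Fl}(V)\times_{\PP V}\cC_{4}$ onto a copy of $\mathrm{Fl}(V)\subset\rQ$ of codimension $14$ with $\PP^{13}$-fibres. All the numerical data you predict (dimensions, the two rulings of $E$ over the common base $\PP V$, the codimension mismatch that makes $\gamma$ a flip) are exactly what the paper establishes. But the technical heart you single out --- the construction of $q$ --- is where a concrete idea is missing. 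The point of $\rN$ that must be assigned to a pair $(s,I_{p,C}(1))\in E$ is the ``collinear'' boundary point $E_{\ell}$ determined by $p$ \emph{and the chosen section} $s$, so it cannot be read off from a Beilinson monad of $F$ alone; and the natural sheaf-theoretic assignment $(s,F)\mapsto(s^{*},G)$, with $G$ the extension of $F^{D}(3)$ by $\cO_{\PP^{2}}$ cut out by $s$, degenerates precisely along $E$: there $G$ admits no resolution $\cO_{\PP^{2}}(1)^{2}\to\cO_{\PP^{2}}(2)^{3}\to G$, hence defines no point of $\rN$ (Lemma \ref{lem:destab}). The paper's fix is the \emph{elementary modification of pairs}: one modifies the universal family along $E$ by the destabilizing quotient, checks (Lemma \ref{lem:modified}) that the modified pair is the non-split extension recording the normal direction to $E$, and only then invokes the universal properties of $\rN$ and of $\PP(\cU)$ to obtain a genuine morphism $q$. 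Without this step, or a substitute, your $q$ is only a rational map regular off $E$.

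Two further points. The paper proves part (1) independently of part (2), by identifying $\rQ$ with the moduli space of Bridgeland stable objects after the first wall-crossing (Lemma \ref{lem:QisBridgeland}); this is not merely an alternative route to (1), it is also what supplies the exact triangle \eqref{wall1} and hence the computation $\cN_{E/\rM^{+}}|_{\PP^{13}\times\PP^{1}}\cong\cO_{\PP^{13}\times\PP^{1}}(-1,-1)$ needed to apply the Fujiki--Nakano criterion and conclude that $q$ is a \emph{smooth} blow-down; your ``matching of the two normal-bundle structures'' names this issue but provides no mechanism for it. Deducing (1) from (2) as you propose is logically sound once (2) is complete, but as written the proposal establishes the expected shape of the answer rather than the two steps (extension of $q$ across $E$, smoothness of the contraction) on which the theorem actually rests.
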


To show this Theorem, first of all, we show that the projective bundle $\rQ$ over $\rN$ can be translated as a moduli space of Bridgeland stable objects with an appropriate stability condition (Lemma \ref{lem:QisBridgeland}). The computation of the wall-crossing locus is straightforward. A key observation is that this wall-crossing locus coincides with the blow-up center of $r : \rM^{+} \to \rM$. To show the existence of morphism $q: \rM^{+} \to \rQ$, we perform the elementary modification of pairs. The elementary modification (a sheaf version of stable reduction) is a useful trick to construct a morphism between two moduli spaces of sheaves or their variations, and there have been many concrete examples (for instance, \cite{CCK05} for vector bundles, \cite{CK11} for pure sheaves, \cite{Lo13} for Bridgeland stable objects). After the construction of $q$, by studying the local structure of its exceptional locus, we showed that $q$ is indeed a smooth blow-up. 

Now we go back to the original problem calculating the Chow ring of $\rM$. The Chow ring of $\rN$ was computed by Ellingsrud and Str\o mme (\cite{ES89}). Thus by carefully applying the projective bundle formula (\cite[Example 8.3.4]{Ful98}), the blow-up formula (\cite[Theorem 1 in Appendix]{Kee92}), and performing the blow-down computation (Section \ref{sec:ChowM}), we are able to obtain Theorem \ref{thm:ChowMintro}. 

Since the ring presentation is complicated, to reduce the number of relations as well as to find a simpler set of relations, it was indispensable to use a computer algebra system. We used Macaulay2 (\cite{M2}) to perform this calculation. We have posted all source codes we have used during the computation and their outputs at the second author's website:
\begin{center}
\url{http://www.hanbommoon.net/publications/chow-ring-m/}
\end{center}

\subsection{Stream of this paper}
In Section \ref{sec:relevantmoduli}, we introduce several relevant moduli spaces. Section \ref{sec:masterspace} is devoted to the proof of Theorem \ref{thm:mainprop} by using the elementary modification of pairs. In next three sections, we compute Chow rings of $\rN$, $\rM^{+}$, and $\rM$. Section \ref{sec:Cherneff} is for the computation of the total Chern class and some effective cycles on $\rM$. Finally, in the last section we evaluate Euler characteristics of some line bundles on $\rM$, which naturally appear on the strange duality conjecture on the plane.

\subsection*{Notation}
We work on the field $\CC$ of complex numbers. In this paper, the Chow rings and cohomology rings are always that with rational coefficients. $V$ is a fixed 3-dimensional complex vector space and if there is no further explanation, $\PP^2= \PP V$. Every point of an algebraic variety is a closed point. The projective bundle of a locally free sheaf $\cF$ over $X$ is defined by 
\[
\PP(\cF):=\proj(\mathrm{Sym}^{\bullet} (\cF^*))\lr X, 
\]
so $\PP(\cF)$ is the space of one-dimensional subspaces of $\cF$. 

Finally, we list some notation for important spaces in this paper. 
\begin{itemize}
\item $\rM := \rM(4,1) = \rM(\PP^{2}, 4m+1)$: The moduli space of stable sheaves $F$ with Hilbert polynomial $\chi(F(m))=4m+1$.
\item $\rM^{\alpha} := \rM^{\alpha}(\PP^{2}, 4m+1)$: The moduli space of $\alpha$-stable pairs $(s,F)$ with Hilbert polynomial $\chi(F(m))=4m+1$.
\item $\rN := \rN(3;2,3)$: The moduli space of quiver representations of $3$-Kronecker quiver with the dimension vector $(2, 3)$.
\item $\rQ:=\PP(\cU)$, where $\cU$ is a rank $12$ vector bundle over $\bN$ defined in Equation \eqref{eqn:exactseqforQ}.
\item $\bH(n):=\mathrm{Hilb}^n(\PP^2)$: The Hilbert scheme of $n$ points on $\PP^2$.
\end{itemize}

\subsection*{Acknowledgement}
We would like to thank Jinwon Choi, Young-Hoon Kiem, and David Swinarski for their valuable comments. David Swinarski kindly made a Macaulay2 code to compute a presentation of total Chern class in Proposition \ref{prop:Chernclass}. KC is partially supported by Korea NRF grant 2013R1A1A2006037.


\section{Relevant moduli spaces}\label{sec:relevantmoduli}

\subsection{Moduli spaces of stable pairs}\label{ssec:moduliofpairs}
Let us recall the notion of the moduli space of pairs (more generally, coherent systems) introduced by Le Potier (\cite{LP93b, He98}). Let $(X, H)$ be a smooth polarized projective variety. A pair $(s, F)$ consists of a coherent sheaf $F$ on $X$ and a nonzero section $\cO_{X} \stackrel{s}{\to} F$. Fix a positive rational number $\alpha$. A pair $(s,F)$ is called \emph{$\alpha$-semistable} if $F$ is pure and for any subsheaf $F'\subset  F$, the inequality
\[
	\frac{P(F')(m)+\delta\cdot\alpha}{r(F')} \le
	\frac{P(F)(m)+\alpha}{r(F)}
\]
holds for $m\gg 0$. Here $\delta=1$ if the section $s$ factors through $F'$ and $\delta=0$ otherwise. When the strict inequality holds, $(s,F)$ is called an \emph{$\alpha$-stable} pair. As in the case of ordinary sheaves, one can define Jordan-H\"older filtration and $S$-equivalent classes of pairs.

There exists a projective scheme $\rM^{\alpha}(X, P(m))$ parameterizing $S$-equivalence classes of $\alpha$-semistable pairs with Hilbert polynomial $P(m)$ (\cite[Theorem 4.12]{LP93b} and \cite[Theorem 2.6]{He98}). We denote by $\rM^{\infty}(X, P(m))$ when $\alpha$ is sufficiently large and $\rM^{+}(X, P(m))$ when $\alpha$ is sufficiently small. When $P(m)$ is linear, the space $\rM^{\infty}(X,P(m))$ is the space of stable pairs in the sense of the Pandharipande-Thomas (\cite{PT09}). By decreasing the stability parameter $\alpha$, we obtain a sequence of flips among the moduli spaces of $\alpha$-stable pairs:
\[
\rM^{\infty}(X,P(m))\dashleftarrow\dashrightarrow \rM^{\alpha}(X,P(m))\dashleftarrow\dashrightarrow \rM^{+}(X,P(m)).
\]
\begin{proposition}\label{prop:stablepairs}
\begin{enumerate}
\item  (\cite[Theorem 4.3]{He98}) The moduli space $\rM^{\alpha}(X, P(m))$ carries a universal family of $\alpha$-stable pairs, if $\alpha$ is generic. This includes two extremal cases of $\alpha = \infty, +$. 
\item There exists a a natural forgetful map to the moduli space of semistable sheaves
\[
	r:\rM^{+}(X, P(m))\longrightarrow 
	\rM(X, P(m)),
\]
which maps $(s,F)$ to $F$.
\item (\cite[Section 4.4]{He98}) If $(X, H) = (\PP^{2}, \cO_{\PP^{2}}(1))$, the moduli space $\rM^{\infty}(\PP^{2}, dm+\chi)$ of $\infty$-stable pairs is isomorphic to the relative Hilbert scheme of $\left(\chi - \frac{d(3-d)}{2}\right)$-points on the universal degree $d$ curves. 
\end{enumerate}
\end{proposition}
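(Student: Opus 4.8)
The plan is to regard parts (1) and (3) as quoted results of Le Potier and He (\cite{LP93b, He98}), so that the only statement requiring an argument is the construction of the forgetful morphism $r$ in part (2). I would produce $r$ from the universal property of $\rM(X, P(m))$ as a moduli space corepresenting the functor of flat families of semistable sheaves, after first checking the pointwise claim that forgetting the section of a $(+)$-stable pair yields a semistable sheaf.

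First I would prove the pointwise statement: if $(s, F)$ is $\alpha$-stable for all sufficiently small $\alpha > 0$, then $F$ is Gieseker-semistable. Let $F' \subset F$ be a proper subsheaf; the pair-stability inequality reads
\[
	\frac{P(F')(m) + \delta\alpha}{r(F')} < \frac{P(F)(m) + \alpha}{r(F)} \quad (m \gg 0),
\]
with $\delta = 1$ when $s$ factors through $F'$ and $\delta = 0$ otherwise. Because $r$ is the leading coefficient, the correction terms $\alpha/r(F)$ and $\delta\alpha/r(F')$ are of strictly lower order than the reduced Hilbert polynomials $P(F')/r(F')$ and $P(F)/r(F)$; hence for fixed small $\alpha$ the comparison for $m \gg 0$ is governed by the reduced Hilbert polynomials, and the $\alpha$-terms intervene only when these agree. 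In particular, were $F$ not semistable, some $F'$ would satisfy $P(F')(m)/r(F') > P(F)(m)/r(F)$ for $m \gg 0$, and this strict leading inequality would persist after adding the $\alpha$-terms for every small $\alpha$ and either value of $\delta$, contradicting $\alpha$-stability. Thus $F$ is semistable.

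Next I would globalize. By part (1), $\rM^{+} := \rM^{+}(X, P(m))$ carries a universal pair $(s, \cF)$ on $\rM^{+} \times X$, flat over $\rM^{+}$. Forgetting the universal section leaves a flat family $\cF$ of sheaves whose every closed fibre is semistable by the previous step. Since $\rM(X, P(m))$ corepresents the moduli functor of semistable sheaves, this family induces a unique morphism $r : \rM^{+} \to \rM$, and on closed points $r$ sends $(s, F)$ to the $S$-equivalence class of $F$, which is the desired forgetful map.

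The only genuinely delicate point is the uniformity underlying the phrase ``sufficiently small $\alpha$'': I must verify that there is a single threshold $\alpha_{0} > 0$, valid for the whole family, below which $\alpha$-stability no longer depends on $\alpha$, so that $(+)$-stability is well defined and the implication of the second paragraph holds at every fibre at once. This is the standard wall-and-chamber structure on the $\alpha$-line, and it follows from the boundedness of semistable sheaves of fixed Hilbert polynomial, which limits the numerical types of possible destabilizing subsheaves to a finite set and hence produces only finitely many walls. In the concrete setting of this paper ($X = \PP^{2}$, $P(m) = 4m+1$) all sheaves are pure of dimension one, reduced Hilbert polynomials collapse to slopes $\chi/r$, and the walls are completely explicit, so the construction of $r$ is unconditional.
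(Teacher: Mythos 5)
Your proposal is correct and is consistent with how the paper treats this statement: the paper offers no proof at all, citing \cite{He98} for (1) and (3) and leaving the forgetful map in (2) as the standard consequence of the universal family on $\rM^{+}$ together with the corepresentability of the sheaf moduli functor, which is exactly the argument you give. The only imprecision is the phrase ``of strictly lower order'' -- for one-dimensional sheaves the reduced Hilbert polynomials differ by a constant, the same order in $m$ as the $\alpha$-corrections -- but your subsequent case analysis and the uniformity discussion via finiteness of walls already handle this correctly, so there is no gap.
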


In this paper, we use a special case of $(X, H) = (\PP^{2}, \cO_{\PP^{2}}(1))$ and $P(m) = 4m + 1$. 

\begin{definition}
Let $\rM^{\alpha} := \rM^{\alpha}(\PP^{2}, 4m+1)$. 
\end{definition}
The sequence of flips for $\rM^{\alpha}$ was studied in \cite[Section 3.1]{CC12}. There is a single flip 
\[
	\xymatrix{\rM^{\infty} \ar@{<-->}[rr] \ar[rd] && \rM^{+}\ar[ld]\\
	& \rM^{3}},
\]
which can be described as a composition of a smooth blow-up and a smooth blow-down. 

For a general $F \in \rM$, $\dim \rH^{0}(F) = 1$. Therefore the forgetful map in Proposition \ref{prop:stablepairs} is a birational map. Indeed, $r : \rM^{+} \to \rM$ is a smooth blow-up along the Brill-Noether locus $\{F \in \rM\;|\; \dim \rH^{0}(F) = 2\}$ (\cite[Proposition 4.4]{CC12}). 

Finally, as in the case of an ordinary moduli space $\rM(d, \chi)$ of torsion sheaves, there is a Fitting map
\[
	\rM^{\alpha} \to |\cO_{\PP^{2}}(4)|
\]
which maps $(s, F)$ to the support of $F$ (cf. \cite[Section 2.2]{LP93a}). 

\subsection{Moduli spaces of Kronecker quiver representations}\label{sub:defq}
The moduli space of representations of a Kronecker quiver can be constructed as a GIT-quotient.  Let $\rN(3;d-2,d-1)$ the moduli space of quiver representations of $3$-Kronecker quiver
\[
\xymatrix{\bullet\ar@/^/[r] \ar@/_/[r]\ar[r]&\bullet\\}
\]
with dimension vector $(d-2, d-1)$. It can be identified with the space of isomorphism classes of stable sheaf homomorphisms
\begin{equation}\label{res1}
\cO_{\PP^2}(-2)^{\oplus d-2}\longrightarrow \cO_{\PP^2}(-1)^{\oplus d-1}
\end{equation}
up to the action of the automorphism group $G := \mathrm{GL}_{d-2}\times \mathrm{GL}_{d-1}/\CC^*$. Thus for two vector spaces $E$ and $F$ of dimension $d-2$ and $d-1$ respectively and $V^* = \rH^0(\cO_{\PP^2}(1))$, the GIT quotient $\rN(3;d-2,d-1) := \Hom(F,V^{*}\otimes E)\git_{L}G$ is a natural way to construct the moduli space. Indeed, with an appropriate linearization $L$, the GIT stability is equivalent to the stability of Kronecker's sense (For detail, see \cite{Kin94}).

\begin{proposition}[\protect{\cite{Kin94}}]
The space $\rN(3;d-2,d-1)$ is a smooth projective variety of dimension $(d-1)(d-2)$ and carries a universal family of quiver representations $\cF \to V^{*} \otimes \cE$ where $\cF$ and $\cE$ are two universal bundles of rank 2 and 3 respectively. 
\end{proposition}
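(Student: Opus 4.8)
The plan is to recognize this statement as the specialization, to the $3$-Kronecker quiver with dimension vector $(d-2,d-1)$, of King's general GIT construction of moduli of quiver representations \cite{Kin94}; so I would check that the relevant numerical hypotheses hold and then read off smoothness, the dimension, and the universal family. First I would record the GIT setup already fixed above: the representation space $R := \Hom(F, V^{*}\otimes E)$ is an affine space carrying a linear action of the reductive group $G = (\GL(E)\times \GL(F))/\CC^{*}$, and for the canonical stability character $\theta$ (the one with $\theta\cdot(d-2,d-1)=0$) King's theorem identifies GIT semistability of a point with $\theta$-semistability of the corresponding Kronecker representation, so that $\rN(3;d-2,d-1) = R \git_{L} G$.

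The key numerical input is that $d-2$ and $d-1$ are consecutive integers, hence coprime, and I would use this twice. A subrepresentation of dimension vector $(a,b)$ has the same slope precisely when $(d-1)a = (d-2)b$, and coprimality forces $(a,b)=(0,0)$ or $(a,b)=(d-2,d-1)$; thus there is no proper subrepresentation of equal slope, every semistable point is stable, and $R^{ss}=R^{s}$. Consequently the GIT quotient is geometric. Moreover a stable representation is a simple object, so by Schur's lemma its endomorphisms are the scalars, which have been divided out in $G$; hence $G$ acts with trivial stabilizers, i.e.\ freely, on $R^{s}$.

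Smoothness, projectivity, and the dimension now follow formally. Since $R^{s}$ is open in the affine space $R$ and $G$ acts freely on it with a geometric quotient, $\rN(3;d-2,d-1)=R^{s}/G$ is smooth; projectivity holds because $R^{ss}=R^{s}$ leaves no strictly semistable boundary, so the quotient is $\proj$ of the ring of semi-invariants with weight-zero part $R^{G}=\CC$, hence proper over $\spec \CC$. The dimension is the expected one, and because the action is free this is the actual dimension:
\[
\dim R - \dim G = 3(d-2)(d-1) - \big((d-2)^{2}+(d-1)^{2}-1\big) = (d-1)(d-2).
\]

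The remaining and most delicate point is the universal family. On $R^{s}$ the trivial bundles $E\otimes\cO_{R^{s}}$ and $F\otimes\cO_{R^{s}}$, together with the tautological homomorphism between them, are naturally $G$-equivariant; to descend them along the free quotient $R^{s}\to \rN$ one must verify the Kempf descent criterion, namely that the residual central $\CC^{*}\subset G$ acts trivially on the fibres. This is exactly where coprimality re-enters: choosing integers $p,q$ with $p(d-2)+q(d-1)=1$ produces a character of $G$ that twists the equivariant structures so that the centre acts trivially, and the two tautological bundles then descend to locally free sheaves $\cE$ and $\cF$ on $\rN$, while the tautological homomorphism descends to the universal representation $\cF\to V^{*}\otimes\cE$. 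I expect this equivariant-descent step---verifying the criterion through the explicit character built from $p,q$---to be the main obstacle, whereas smoothness, projectivity, and the dimension count are immediate consequences of having a free geometric quotient of an open subset of affine space.
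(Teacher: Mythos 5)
Your proposal is correct and is exactly the intended argument: the paper gives no proof of its own here, citing \cite{Kin94}, and your reconstruction — coprimality of $d-2$ and $d-1$ forcing semistable $=$ stable and trivial stabilizers, projectivity from the quiver having no oriented cycles (so the invariant ring is $\CC$), the dimension count $\dim R - \dim G$, and descent of the tautological bundles after twisting by a character built from $p(d-2)+q(d-1)=1$ — is precisely King's Propositions 4.3 and 5.3 specialized to this case. The only cosmetic mismatch is with the statement itself, which quotes the ranks $2$ and $3$ of the universal bundles for the case $d=4$ used later in the paper rather than the general $d-2$ and $d-1$.
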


Let $Z$ be a finite subscheme of $\PP^{2}$ of length $n := (d-1)(d-2)/2$. Since a resolution of a general ideal sheaf $I_Z(1)$ twisted by $\cO_{\PP^{2}}(1)$ is of the form \eqref{res1}, one can see that the moduli space $\rN(3;d-2,d-1)$ is birational to the Hilbert scheme $\bH(n)$ of $n$ points in $\PP^2$. 

In this paper, the relevant case is $d = 4$ (so $n = 3$). 
\begin{definition}
Let $\rN := \rN(3;2,3)$.
\end{definition}

The birational map between $\rH(3)$ and $\rN$ has a nice local structure. 
\begin{proposition}[\protect{\cite[Section 6]{LQZ03}}]\label{prop:hilbcon}
There exists a smooth blow-down morphism
\[
	t: \bH(3)\longrightarrow \rN.
\]
which contracts the divisor of three collinear points.
\end{proposition}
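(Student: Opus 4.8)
The plan is to exhibit the morphism $t : \bH(3) \to \rN$ by comparing the two moduli problems directly and then to identify its exceptional locus. First I would recall that a general length-$3$ subscheme $Z \subset \PP^2$ imposes independent conditions, so that the twisted ideal sheaf $I_Z(1)$ admits a linear resolution
\[
	0 \lr \cO_{\PP^2}(-2)^{\oplus 2} \stackrel{\phi}{\lr}
	\cO_{\PP^2}(-1)^{\oplus 3} \lr I_Z(1) \lr 0,
\]
and the map $\phi$, viewed up to the change-of-basis action of $G = \GL_2 \times \GL_3/\CC^*$, is precisely a point of $\rN$. This gives the birational identification already noted in the excerpt; the content of the proposition is to upgrade this rational map to an everywhere-defined morphism on $\bH(3)$ and to pin down where it fails to be an isomorphism.

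The key point is that over \emph{all} of $\bH(3)$ one can still produce a stable Kronecker representation, even when $Z$ is special. Concretely, I would construct a universal resolving map using the universal ideal sheaf $\cI$ on $\bH(3) \times \PP^2$: pushing the relevant twists to $\bH(3)$ and using cohomology-and-base-change, the sheaves $R^\bullet \pi_* (\cI(k))$ are locally free of the expected ranks for $k = 1, 2$ over the whole Hilbert scheme, so one obtains a globally defined family of maps $\cO(-2)^{\oplus 2} \to \cO(-1)^{\oplus 3}$ over $\bH(3) \times \PP^2$. Verifying that each such map is King-stable (in the sense of \cite{Kin94}) for every $Z$ — including the collinear and non-reduced configurations — then yields, by the universal property of $\rN$, the desired morphism $t$. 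Alternatively, and this is the cleaner route, I would simply invoke the local analysis of \cite{LQZ03} cited in the statement, where exactly this contraction is worked out.

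Next I would analyze the fibers to see that $t$ contracts precisely the divisor of collinear triples. If the three points of $Z$ lie on a line $\ell$, then $I_Z(1)$ acquires a global section vanishing on $\ell$, so the resolution degenerates: the map $\phi$ drops rank in a way that changes the $S$-equivalence class, and distinct collinear configurations with the same supporting line map to the same Kronecker representation. I would identify the locus of collinear $Z$ as an irreducible divisor $D \subset \bH(3)$ (it fibers over the dual $\PP^2$ of lines, with fiber $\bH^3(\ell) = \PP^3$ minus nothing relevant, giving $\dim D = 3 + 2 = 5 = \dim \bH(3) - 1$) and compute that its image in $\rN$ has codimension $2$, so that $t$ genuinely blows it down.

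The main obstacle I expect is establishing \emph{smoothness} of the blow-down, i.e.\ that $t$ is the blow-up of $\rN$ along a smooth center with $D$ as exceptional divisor, rather than merely a birational contraction. This requires a local computation of the differential of $t$ along $D$: one must show the normal bundle of $D$ restricted to each fiber is the tautological bundle of the projectivization collapsed by $t$, so that $t$ looks étale-locally like a projection $\PP^k$-bundle $\to$ point crossed with the base. I would carry this out by writing down explicit local coordinates on $\bH(3)$ near a collinear $Z$ (using the parametrization of $\bH(3)$ as a resolution of $\sym^3 \PP^2$ near such strata) and matching them against the King GIT-chart on $\rN$; this matching of deformation spaces is the delicate step, and it is exactly the computation carried out in \cite[Section 6]{LQZ03}, which I would cite to conclude.
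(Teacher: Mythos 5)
The paper offers no proof of this statement at all: it is imported wholesale from \cite[Section 6]{LQZ03}, so your final fallback of simply citing that reference is exactly what the authors do. The problems lie in the independent construction you sketch before falling back. Your claim that the sheaves $R^{\bullet}\pi_{*}(\cI(k))$ are locally free of the expected ranks over the whole of $\bH(3)$ fails precisely on the locus you need to control: for a collinear $Z$ one has $h^{0}(I_{Z}(1))=h^{1}(I_{Z}(1))=1$ while both vanish for non-collinear $Z$, and the minimal resolution of $I_{Z}(1)$ degenerates to $0\to\cO_{\PP^{2}}(-3)\to\cO_{\PP^{2}}\oplus\cO_{\PP^{2}}(-2)\to I_{Z}(1)\to 0$, which is not of Kronecker shape. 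Equivalently, the kernel of the multiplication map $\rH^{0}(I_{Z}(2))\otimes V^{*}\to\rH^{0}(I_{Z}(3))$ jumps from dimension $2$ to $3$ on the collinear divisor. So there is no flat family of $(2,3)$-representations over all of $\bH(3)$ produced this way, and King-stability of ``each such map'' is moot where the map does not exist. The construction that does work globally is the one the paper records in Remark \ref{rem:setmapn}(2): $Z\mapsto\rH^{0}(I_{Z}(2))\in\mathrm{Gr}(3,\mathrm{Sym}^{2}V^{*})$, which is regular because every length-$3$ subscheme imposes independent conditions on conics, and which lands in $\rN\subset\mathrm{Gr}(3,\mathrm{Sym}^{2}V^{*})$; for collinear $Z$ on $\ell$ it sends $Z$ to $\ell\cdot V^{*}$, which depends only on $\ell$ and explains the contraction.

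Your fiber analysis is also internally inconsistent. You correctly identify the collinear divisor $D$ as a $\PP^{3}$-bundle over $\PP V^{*}$, hence $5$-dimensional, but then assert its image in $\rN$ has codimension $2$. The image is $\PP V^{*}\cong\PP^{2}$ inside the $6$-dimensional $\rN$, i.e.\ codimension $4$ --- which is exactly what makes $\PP^{3}$ fibers consistent with a smooth blow-up; a codimension-$2$ center would force $\PP^{1}$ fibers. The concluding smoothness discussion (identify the normal bundle of $D$ along the fibers and apply a Fujiki--Nakano type criterion) is the right shape of argument, but as written it rests on the two faulty computations above. Since the paper itself only cites \cite[Section 6]{LQZ03} here, you are better off doing the same than reconstructing the argument with these errors.
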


\begin{remark}\label{rem:setmapn}
\begin{enumerate}
\item If $Z$ is a triple of non-collinear points, then $t([Z])=I_Z(1)$. If $Z$ is collinear and supported on a fixed line $\ell \subset \PP^2$, then $t([Z])=E_\ell\in \PP\Ext_{\PP^2}^1(\cO_{\PP^{2}}, \cO_\ell(-2)) = \{\mathrm{pt}\}$ and  the image only depends on the line $\ell$ (\cite[Section 3.3]{DM11}). Thus the image of the locus of collinear points is naturally isomorphic to $\PP V^{*}$. 

\item There is a regular map $\bH(3) \to \mathrm{Gr}(3,\mathrm{Sym}^{2}V^{*})$, which is defined by $I_{Z} \mapsto \rH^{0}(I_{Z}(2))\subset \rH^0(\cO_{\PP^{2}}(2))=\mathrm{Sym}^{2}V^{*}$. Then the map $t$ is the restriction to the image (\cite[Equation (4.2)]{LQZ03}). In other words, $\rN$ is naturally embedded into $\mathrm{Gr}(3, \mathrm{Sym}^{2}V^{*})$. 
\end{enumerate}
\end{remark}

We may identify $\rN$ with the moduli space of (stable) sheaf homomorphisms
\[
	\cO_{\PP^{2}}(1)^{2} \to \cO_{\PP^{2}}(2)^{3}
\]
by tensoring $\cO_{\PP^{2}}(3)$. On $\rN \times \PP^{2}$, there is a universal morphism
\begin{equation}\label{eqn:universalmorphism}
	\phi : \pi_{1}^{*}\cF \otimes \pi_{2}^{*}\cO_{\PP^{2}}(1) 
	\to \pi_{1}^{*}\cE \otimes \pi_{2}^{*}\cO_{\PP^{2}}(2)
\end{equation}
where $\pi_{1} : \rN \times \PP^{2} \to \rN$ and $\pi_{2} : \rN \times \PP^{2} \to \PP^{2}$ are two projections. Let $\cU$ be the cokernel of
\[
	\pi_{1 *}\phi : \cF \otimes \rH^{0}(\cO_{\PP^{2}}(1)) =
	\pi_{1 *}(\pi_{1}^{*}\cF \otimes \pi_{2}^{*}\cO_{\PP^{2}}(1)) \to
	\pi_{1 *}(\pi_{1}^{*}\cE \otimes \cO_{\PP^{2}}(2)) = \cE \otimes
	\rH^{0}(\cO_{\PP^{2}}(2)).
\]
On the stable locus, $\phi$ is injective. Thus we have an exact sequence
\begin{equation}\label{eqn:exactseqforQ}
	0 \to \cF\otimes \rH^{0}(\cO_{\PP^{2}}(1)) \to
	\cE \otimes \rH^{0}(\cO_{\PP^{2}}(2)) \to \cU \to 0
\end{equation}
and $\cU$ is a rank 12 bundle.

\begin{definition}[\protect{\cite[Section 3.1.2]{DM11}}]\label{def:Q}
Let $\rQ := \PP(\cU)$. 
\end{definition}

\subsection{Moduli spaces of Bridgeland stable objects}\label{ssec:Bridgeland} 
After the initiation of the study of the birational geometry of Hilbert scheme $\rH(n)$ in terms of Bridgeland stability in \cite{ABCH13}, Mori's program for moduli spaces of sheaves has been generalized into pure one-dimensional sheaves cases by \cite{Woo13, BMW14}. The moduli space $\rM(d,\chi)$ is a Mori dream space (\cite[Corollary 3.6]{Woo13}), so the finiteness of rational contractions and the chamber decomposition of the effective cone are guaranteed. We briefly explain the relation between Bridgeland stability space and the chamber structure on the effective cone of $\rM(d, \chi)$. We provide a simplified definition for $\PP^{2}$ only. 

For $s \in \RR$ and $t > 0$, the potential function is given by
\[
	Z_{s,t}(E)=- \int_{\PP^2} e^{-(s+it)H} \cdot ch(E)
	= -\left(ch_{2}- sc_{1} + \frac{1}{2}r(s^{2}-t^{2})\right)
	+ it(c_{1}-rs).
\]
where $(r, c_{1}, ch_{2}) = (r(E), c_{1}(E),ch_{2}(E))$ and $H$ is the hyperplane class of $\PP^{2}$. The slope function $\mu_{s,t} : \mathrm{K}(\PP^{2}) \to \RR$ is
\[
	\mu_{s,t}(E)=
	-\frac{\mathrm{Re}(Z_{s,t}(E))}{\mathrm{Im}(Z_{s,t}(E))}
	= \frac{ch_{2} - sc_{1}+\frac{1}{2}r(s^{2}-t^{2})}{t(c_{1}-rs)}.
\]
As in the case of coherent sheaves, one can define a stability of objects in the heart $\cA_{s}$ of a certain $t$-structure (depending only on $s$) in $\mathrm{D}^b(\PP^2)$ by using the slope function $\mu_{s,t}$. The upper half plane $\HH = \{(s,t)\in \RR^{2}\;|\; t> 0\}$ can be regraded as the stability space (\cite{Woo13, BMW14}). Fix a topological type $v = (r, c_{1}, ch_{2}) \in \rA^{*}(\PP^{2})$. Let $\rM_{\mu_{s,t}}(v)$ be the moduli space of Brigdeland stable objects in $\cA_{s}$ with the topological type $v$. Then the stability space can be decomposed into finitely many chambers and $\rM_{\mu_{s,t}}(v)$ varies only if $(s,t)$ moves from a chamber to another chamber. Also if $t \gg 0$, $\rM_{\mu_{s,t}}(v) = \rM(v)$, the moduli space of ordinary semistable sheaves with topological type $v$ (\cite[Section 7]{Woo13}). 

From now, consider a special case of $v(F) = (0, d, \frac{-3d+2}{2})$ (and thus $\chi(F(m)) = dm+1$). Then $\rM(v) = \rM(d, 1)$. In this case the walls dividing chambers in $\HH$ are semi-circles with the common center $(-\frac{3d-2}{2d},0)$ (\cite[Lemma 6.9]{Woo13}). Note that the $\mathrm{Pic}(\rM(d, \chi))\cong \ZZ\oplus \ZZ$ for $d\geq 3$ (\cite[Theorem 1.1]{LP93a}). 

\begin{proposition}[\protect{\cite[Theorem 1.1]{BMW14}}]\label{prop:onetoone}
There is a one-to-one correspondence between
\begin{enumerate}
\item the chambers of the stability space $\HH$;
\item the birational models we obtain by running a directed MMP.
\end{enumerate}
\end{proposition}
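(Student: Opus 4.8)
The plan is to exhibit an order-preserving bijection between the two wall-and-chamber structures, using the positivity construction of Bayer--Macr\`i to bridge the Bridgeland side and the birational side. The geometric reason such a correspondence should exist is dimensional: by \cite[Theorem 1.1]{LP93a}, $\mathrm{Pic}(\rM(d,1))\cong \ZZ\oplus\ZZ$, so $N^{1}(\rM(d,1))_{\RR}$ is a plane and the movable cone $\overline{\mathrm{Mov}}(\rM(d,1))$ is a two-dimensional cone. Because $\rM(d,1)$ is a Mori dream space (\cite[Corollary 3.6]{Woo13}), this cone decomposes into finitely many Mori chambers, one per marked birational model, linearly ordered between its two extremal rays; a directed MMP is precisely a walk through these chambers from the vertex containing the ample cone of $\rM(d,1)$ to the opposite boundary. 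On the other side, the walls in $\HH$ are the nested semicircles with common center $\left(-\tfrac{3d-2}{2d},0\right)$ (\cite[Lemma 6.9]{Woo13}), and these cut $\HH$ into chambers linearly ordered by radius, with the outer (large $t$) chamber producing $\rM_{\mu_{s,t}}(v)=\rM(d,1)$.

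First I would construct the comparison map. To a stability condition $\sigma=\sigma_{s,t}$ the Bayer--Macr\`i positivity lemma attaches a numerical class $\ell_{\sigma}$ on the Bridgeland moduli space $\rM_{\sigma}(v)=\rM_{\mu_{s,t}}(v)$, built from the central charge $Z_{s,t}$; this class is nef, and it is strictly positive on a curve $C\subset \rM_{\sigma}(v)$ if and only if the objects parametrized by $C$ remain $\sigma$-stable of the same slope, so that $\ell_{\sigma}$ is ample exactly when $\sigma$ lies in the interior of a chamber and acquires a contracted curve exactly on a wall. Since crossing a single wall changes $\rM_{\sigma}(v)$ only along a locus of strictly semistable objects, all the spaces $\rM_{\sigma}(v)$ share a common dense open subscheme of honestly $\sigma$-stable sheaves, and are therefore birational to $\rM(d,1)$; transporting $\ell_{\sigma}$ along these birational identifications produces a continuous map $\Phi:\HH\to \overline{\mathrm{Mov}}(\rM(d,1))$.

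Next I would match the two decompositions through $\Phi$. Inside a Bridgeland chamber the model $\rM_{\sigma}(v)$ is constant and $\ell_{\sigma}$ stays ample on it, so $\Phi$ maps the chamber into the ample cone of that fixed model, i.e. the interior of the corresponding Mori chamber; on a semicircular wall some object becomes strictly $\sigma$-semistable, $\ell_{\sigma}$ contracts a curve, and $\Phi$ lands on the shared face separating two adjacent Mori chambers. Because moving $(s,t)$ in the two independent directions of $\HH$ moves $\ell_{\sigma}$ in two independent directions of the rank-two space $N^{1}(\rM(d,1))_{\RR}$, the image of $\Phi$ is full-dimensional, and by continuity together with the linear ordering of both sides (by radius, respectively by distance to the ample vertex) $\Phi$ carries the chain of Bridgeland chambers onto the chain of Mori chambers visited by the directed MMP in the same order. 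Distinct Mori chambers give non-isomorphic models and hence are hit by distinct Bridgeland chambers, giving injectivity; surjectivity onto the portion of $\overline{\mathrm{Mov}}$ swept by the directed MMP follows from the positivity-plus-continuity argument. This yields the desired bijection.

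The hard part is the positivity lemma in the pure one-dimensional setting: proving that $\ell_{\sigma}$ is genuinely ample in chamber interiors and that its null curves are \emph{exactly} the wall-crossing $S$-equivalences, so that no spurious contraction appears and none is missed. The second delicate point is to verify that every extremal contraction produced by the directed MMP is induced by an actual Bridgeland wall -- equivalently, that no Mori wall is invisible to $\HH$ -- and to pin down the two endpoints, matching the innermost chamber of $\HH$ with the terminal model of the MMP (for instance the Fitting contraction to $|\cO_{\PP^{2}}(d)|$) so that the two chains terminate simultaneously.
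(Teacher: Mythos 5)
The first thing to say is that the paper does not prove this statement at all: Proposition \ref{prop:onetoone} is imported verbatim from \cite[Theorem 1.1]{BMW14}, and the authors use it as a black box. So the relevant comparison is with the argument in that reference, and your outline does follow its general strategy (attach a nef class $\ell_{\sigma}$ to each stability condition \`a la Bayer--Macr\`i, show it is ample off the walls and contracts exactly the wall-crossing $S$-equivalence classes, and use the rank-two Picard group plus the Mori dream space property to match the two finite chamber decompositions). In that sense the approach is the right one.

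As a proof, however, what you have written is a plan rather than an argument, and you say so yourself: the two points you flag at the end --- that $\ell_{\sigma}$ is ample precisely in chamber interiors with null curves exactly the $S$-equivalences, and that every Mori wall of $\rM(d,1)$ is realized by a Bridgeland wall in $\HH$ --- are not loose ends but the entire mathematical content of the theorem. Everything before them (linear orderings, continuity of $\Phi$, full-dimensionality of the image) is soft and would follow once those two points are established, so deferring them means the proposal does not actually prove anything beyond the formal framework. Two smaller inaccuracies: along a fixed semicircular wall the class $\ell_{\sigma}$ stays on a fixed ray of $N^{1}(\rM(d,1))_{\RR}$, so the ``two independent directions'' argument for full-dimensionality of the image of $\Phi$ needs to be phrased in terms of the radius parameter sweeping the rays of the movable cone, not in terms of a two-dimensional image of each chamber; and the terminal model of the directed MMP here is not the Fitting contraction to $|\cO_{\PP^{2}}(d)|$ (that is the extremal contraction of $\rM$ itself, corresponding to the $-K_{\rM}$ ray where the large-$t$ chamber already sits) but the last birational model fibering over the Kronecker moduli space --- in the $d=4$ case of this paper, $\rQ \to \rN$. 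Matching the endpoints correctly matters because the whole correspondence is anchored by identifying the outermost chamber with $\rM(d,1)$ and the innermost one with that final fibration.
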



\section{A master space between $\rM$ and $\rQ$ via a space of pairs}\label{sec:masterspace}

The goal of this section is to complete the following diagram. 
\begin{equation}\label{eqn:diagramSec3}
	\xymatrix{\rN&\bH(3)\ar[l]_{t}&\rM^{\infty}\ar@{=>}[l]
	\ar@{~>}[d]\ar@{<-->}[ld]_{\beta}\\
	\rQ \ar@{=>}[u]_{p}&\rM^{+}\ar[l]_{q} \ar[d]^{r}\ar@{~>}[r]
	&\rM^{3}\ar@{=>}[d]\\
	&\rM \ar@{<-->}[lu]^{\gamma} \ar@{=>}[r] 
	&|\cO_{\PP^{2}}(4)|}
\end{equation}
An ordinary arrow $\to$ is a smooth blow-up, a double arrow $\Rightarrow$ is a fibration with positive dimensional fiber, a wiggled arrow $\leadsto$ is a small contraction, and a dashed arrow $\dashrightarrow$ is a flip. Indeed, all maps, except $\gamma$ and $q$, are able to be explained in a broader theoretical context (Section \ref{sec:relevantmoduli}). For instance, $\rM^{\infty} \to \rH(3)$ is a projection, because $\rM^{\infty}$ is isomorphic to the relative Hilbert scheme of 3 points on the universal planar quartic (Proposition \ref{prop:stablepairs}). The flip $\beta$ is a wall-crossing of $\alpha$-stable pairs. The technical result, which we will show in this section, is the following. 

\begin{theorem}\label{thm:mainprop}
\begin{enumerate}
\item There is a flip $\gamma$ between $\rM$ and $\rQ$. 
\item The flip in (1) can be decomposed into a smooth blow-up and a smooth blow-down, 
\[
	\xymatrix{&\rM^+\ar[dr]^{q}\ar[ld]_{r}&
	\\ \rM\ar@{<-->}[rr]^{\gamma}&&\rQ}
\]
and the intermediate master space is $\rM^{+}$.
\end{enumerate}
\end{theorem}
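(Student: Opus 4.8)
The plan is to exhibit both legs of the asserted diagram as smooth blow-ups and to identify their common exceptional divisor. The map $r : \rM^+ \to \rM$ is already in hand: by Proposition \ref{prop:stablepairs}(2) it is the forgetful map $(s,F) \mapsto F$, and by \cite[Proposition 4.4]{CC12} it is a smooth blow-up along the Brill-Noether locus $\cC_4 = \{F \in \rM : \dim \rH^0(F) = 2\}$, whose fiber over a general $F \in \cC_4$ is $\PP(\rH^0(F)) \cong \PP^1$. So the entire content of the theorem is to build the second map $q : \rM^+ \to \rQ$, to prove it is again a smooth blow-up, and then to check that the birational map $\gamma = q \circ r^{-1} : \rM \dashrightarrow \rQ$ is a flip rather than a flop or an isomorphism.

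First I would give $\rQ$ a modular interpretation. Unwinding the defining sequence \eqref{eqn:exactseqforQ}, a point of $\rQ = \PP(\cU)$ over $[\phi] \in \rN$ is a one-dimensional subspace of the fiber $\cU_{[\phi]}$, that is, the Kronecker datum $\cO_{\PP^2}(-2)^{2} \to \cO_{\PP^2}(-1)^{3}$ together with the extra linear-algebra choice recorded by $\cU$; this packages into a two-term complex of topological type $v = (0,4,-5)$. The key lemma (Lemma \ref{lem:QisBridgeland}) is that these objects are exactly the $\mu_{s,t}$-stable objects in $\cA_s$ for $(s,t) \in \HH$ lying in the chamber immediately across the outermost semicircular wall from the large-$t$ chamber, where the Bridgeland moduli space equals $\rM$. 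Granting this, Proposition \ref{prop:onetoone} already tells us that $\rQ$ is one of the wall-crossing models of $\rM$, so $\gamma$ is a birational map arising in the MMP of $\rM$; the remaining task is to pin down its geometry.

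The technical heart is the construction and analysis of $q$. Here I would use the elementary modification of pairs: starting from a point $(s,F) \in \rM^+$, the section $\cO_{\PP^2} \xrightarrow{s} F$ together with the resolution of $F$ produces, after a canonical modification, a Bridgeland-stable object of the type classified by $\rQ$; performing this construction relatively over the universal family of Proposition \ref{prop:stablepairs}(1) then upgrades it to a morphism $q : \rM^+ \to \rQ$. I expect the main obstacle to lie in proving that $q$ is a smooth blow-up. One must locate its center $Z \subset \rQ$ as the wall-crossing (strictly semistable) locus computed in the Bridgeland step, show that the exceptional locus of $q$ is precisely the divisor $E = \PP(N_{\cC_4/\rM})$ already contracted by $r$, and then, by a local normal-bundle computation near a point of $E$, verify that $q|_E : E \to Z$ realizes $E$ as $\PP(N_{Z/\rQ})$. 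The delicate point, flagged in the introduction, is exactly that the $\rQ$-side wall-crossing locus $Z$ matches the $\rM$-side blow-up center $\cC_4$ so closely that the single divisor $E \subset \rM^+$ acquires two distinct projective-bundle structures, one over $\cC_4$ and one over $Z$.

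Finally I would assemble the flip. With $r$ and $q$ both shown to be smooth blow-ups, $\gamma$ is an isomorphism in codimension one that contracts $E$ in two inequivalent ways. Since $\dim \rM = \dim \rQ = 17$, computing the two codimensions and checking $\codim_{\rM}\cC_4 \neq \codim_{\rQ}Z$ (equivalently $\dim \cC_4 \neq \dim Z$) shows the transformation is a genuine flip and not a flop; its direction is fixed by the identification of $\rQ$ as the adjacent Bridgeland model. This establishes (1), and the factorization $\rM \xleftarrow{r} \rM^+ \xrightarrow{q} \rQ$ with both maps smooth blow-ups establishes (2), with $\rM^+$ serving as the intermediate master space.
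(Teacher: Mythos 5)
Your plan follows the paper's own route essentially step for step: part (1) via the Bridgeland/MMP identification of $\rQ$ as the first wall-crossing model (Lemma \ref{lem:QisBridgeland}), the map $r$ as the known blow-up along $\cC_{4}$ from \cite[Proposition 4.4]{CC12}, the map $q$ built by elementary modification of pairs over the universal family, and its smoothness as a blow-down verified by the normal-bundle computation along $E$ (the paper packages this last step as an application of the Fujiki--Nakano criterion). The only detail you gloss over is the paper's intermediate use of the rigidity lemma to first produce $w:\rM^{+}\to\rN$ before lifting to $\rQ$, but this is a matter of presentation rather than a different method.
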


\subsection{Review of Bridgeland wall crossing for $\rM$} 

We recall the result in \cite{DM11} on the description of moduli points in $\rM$ (\cite[Section 3.3]{DM11}). There are three types of objects in $\rM$:
\begin{enumerate}
\item $F = \cO_{C}(p+q+r)$, where $C$ is a quartic curve and $p, q, r$ are three non-collinear points on $C$;
\item $F$ fits into the non-split extension $0 \to \cO_{C} \to F \to \cO_{\ell} \to 0$, where $C$ is a cubic curve and $\ell$ is a line;
\item $F = I_{p, C}(1)$ where $C$ is a quartic curve and $p \in C$. In this case $\dim \rH^{0}(F) = 2$, otherwise $\dim \rH^{0}(F) = 1$. 
\end{enumerate}
Note that the \emph{Brill-Noether locus} of stable sheaves of the form in (3) is isomorphic to the universal quartic curve space $\cC_4:=\{(C,p)\;|\; p\in C\} \subset |\cO_{\PP^{2}}(4)| \times \PP^{2}$. 

The first part of Theorem \ref{thm:mainprop} follows from the lemma below.
\begin{lemma}\label{lem:QisBridgeland}
Let $\rM \dashrightarrow \rM_{\mu_{s,t}}(v)$ be the first Bridgeland wall-crossing. Then the main irreducible component of $\rM_{\mu_{s,t}}(v)$ is isomorphic to $\rQ$. The flipping locus (wall crossing locus) in $\rM$ is isomorphic to $\cC_4$, and that in $\rQ$ is $\mathrm{Fl}(V)$, the full flag variety of $V$.  
\end{lemma}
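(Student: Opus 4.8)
The plan is to identify $\rQ$ with the Bridgeland birational model that appears on the inner side of the first wall, and then to read off the two flipping loci from the destabilizing sequence. Fix the Chern character $v=(0,4,-5)$, so that $\rM(v)=\rM$ and, by \cite[Lemma 6.9]{Woo13}, the walls in $\HH$ are concentric semicircles about the point $(-\tfrac{5}{4},0)$. Since every $F\in\rM$ carries a nonzero section $\cO_{\PP^{2}}\to F$, the natural destabilizing subobject is $\cO_{\PP^{2}}$, of Chern character $(1,0,0)$; solving $\mu_{s,t}(\cO_{\PP^{2}})=\mu_{s,t}(F)$ produces a single semicircle centered at $(-\tfrac{5}{4},0)$, in agreement with \cite{Woo13}. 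A short comparison of radii over the finitely many numerically admissible subobjects then shows that this is the outermost wall, hence the first one crossed as $t$ decreases from $\infty$.

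To compute the flipping locus in $\rM$, observe that on this wall every $F$ acquires $\cO_{\PP^{2}}$ as an equal-slope subobject of $\cA_{s}$, since $\Hom_{\cA_{s}}(\cO_{\PP^{2}},F)=\rH^{0}(F)\neq 0$. Whether $F$ survives to the inner chamber is controlled by the dimension of this space: when $\dim\rH^{0}(F)=1$ the unique section yields the same stable object on both sides, whereas when $\dim\rH^{0}(F)=2$ the pencil of sections supplies a genuinely destabilizing subobject and $F$ is flipped. Writing $F=\cO_{C}(D)$ with $D$ an effective degree-$3$ divisor on the quartic $C$, Riemann--Roch gives $\dim\rH^{0}(F)=1+h^{0}(\cO_{C}(1)-D)$, which jumps to $2$ exactly when $D$ is cut out by a line, i.e. $D=(\ell\cap C)\setminus p$ is collinear and $F=I_{p,C}(1)$. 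Hence the flipping locus in $\rM$ is $\{F:\dim\rH^{0}(F)=2\}=\cC_{4}$, the Brill--Noether locus.

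It remains to identify the model with $\rQ$ and to locate its flipping locus. The generic point of $\rQ=\PP(\cU)$ is a pair $(Z,C)$ consisting of a non-collinear length-$3$ subscheme $Z$ --- equivalently a stable Kronecker module, via $\cU_{[Z]}=\rH^{0}(I_{Z}(4))$ --- together with a quartic $C\supset Z$, and the assignment $(Z,C)\mapsto\cO_{C}(Z)$ is the birational map $\rQ\dashrightarrow\rM$, an isomorphism away from the flipping loci. To upgrade this to an isomorphism onto the main component of $\rM_{\mu_{s,t}}(v)$, I would construct a family of $\mu_{s,t}$-stable objects of class $v$ over $\rQ$ out of the universal Kronecker morphism \eqref{eqn:universalmorphism} and the tautological sub-line-bundle on $\PP(\cU)$, and then invoke corepresentability of the Bridgeland moduli functor. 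The genuinely new objects --- those in which $\cO_{\PP^{2}}$ occurs as a quotient rather than a sub --- lie over the locus $\PP V^{*}\subset\rN$ of collinear Kronecker modules contracted by $t$ (Proposition \ref{prop:hilbcon}, Remark \ref{rem:setmapn}). Analyzing the degenerate cokernel $\cG_{E_{\ell}}=\mathrm{coker}(\phi_{E_{\ell}})$ and the section it distinguishes, I expect to show that these objects are parametrized by a line $\ell$ together with a point $p\in\ell$, that is, by the incidence variety $\{(\ell,p):p\in\ell\}=\mathrm{Fl}(V)$.

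The main obstacle is this last step. It requires, on one hand, producing the universal family that promotes the birational correspondence $(Z,C)\leftrightarrow\cO_{C}(Z)$ to an isomorphism of main components, and, on the other hand, the local analysis over the collinear locus that singles out the point $p\in\ell$ as a point of the fiber $\PP(\cU_{E_{\ell}})=\PP^{11}$ and thereby embeds $\mathrm{Fl}(V)$ into $\rQ$. By contrast, the wall computation and the identification of the $\rM$-side locus with $\cC_{4}$ are comparatively routine, given the classification of the points of $\rM$ recalled above and the collinearity criterion. The two rulings of the common exceptional divisor that this picture predicts --- a $\PP^{1}$-bundle over $\cC_{4}$ and a $\PP^{13}$-bundle over $\mathrm{Fl}(V)$ --- are precisely the structure made precise by Theorem \ref{thm:mainprop}.
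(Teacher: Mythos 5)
Your wall computation contains a genuine error that propagates through the rest of the argument. The first (outermost) actual wall for $v=(0,4,-5)$ is \emph{not} the numerical wall cut out by $\cO_{\PP^{2}}$. Solving $\mu_{s,t}(\cO_{\PP^{2}})=\mu_{s,t}(F)$ gives $(s+\tfrac54)^{2}+t^{2}=\tfrac{25}{16}$, a semicircle of radius $\tfrac54$; the wall the paper crosses is the one of radius $\tfrac74$, cut out by the subobject $I_{p}(1)$ with $ch(I_{p}(1))=(1,1,-\tfrac12)$, for which one finds $(s+\tfrac54)^{2}+t^{2}=\tfrac{49}{16}$. So your claim that ``a short comparison of radii \ldots shows that this is the outermost wall'' is false, and the destabilizing mechanism is different from the one you describe: $F\in\cC_{4}$ is flipped because of the triangle \eqref{wall1}, $I_{p}(1)\to F\to\cO_{\PP^{2}}(-3)[1]$, coming from the sheaf sequence $0\to\cO_{\PP^{2}}(-3)\to I_{p}(1)\to I_{p,C}(1)\to 0$, not because a pencil of sections of $\cO_{\PP^{2}}$ destabilizes it. Your $h^{0}=1$ versus $h^{0}=2$ dichotomy at the $\cO_{\PP^{2}}$-wall is also not a valid criterion as stated: a single equal-slope subobject already destabilizes on one side of a wall, so if $\cO_{\PP^{2}}$ really were an equal-slope subobject of every $F$ at the first wall, the whole moduli space would be strictly semistable there, which is not a flip. (You land on the correct locus $\cC_{4}$ only because the set $\{h^{0}(F)=2\}$ happens to coincide with the set of $F$ admitting $I_{p}(1)$ as a subobject.) The correct wall also immediately yields the $\rQ$-side flipping locus, which you leave open: the flipped objects are the nonsplit extensions $0\to\cO_{\PP^{2}}(-3)[1]\to F'\to I_{p}(1)\to 0$, parametrized for fixed $p$ by $\PP\Ext^{1}(I_{p}(1),\cO_{\PP^{2}}(-3)[1])\cong\PP\rH^{0}(I_{p}(1))^{*}\cong\PP^{1}$, i.e.\ by $\mathrm{Fl}(V)$ over $\PP V$ --- a one-line computation you replace with an unexecuted analysis over the collinear locus $\PP V^{*}$.

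The remaining step --- that the main component of $\rM_{\mu_{s,t}}(v)$ is $\rQ$ --- is also only sketched in your proposal (construct a universal family and invoke corepresentability), and you correctly flag it as the main obstacle. The paper avoids this construction entirely: it uses that $-K_{\rM}=12\pi^{*}\cO_{|\cO_{\PP^{2}}(4)|}(1)$ is nef and spans an extremal ray of $\mathrm{Eff}(\rM)$, that $\rQ$ is a rational contraction of $\rM$ by \cite[Proposition 2.2]{CC15} and hence appears in the directed MMP, that the chamber--model correspondence of Proposition \ref{prop:onetoone} applies, and that there is exactly one wall (at $R=\tfrac74$), so that $\rQ$, being the final model because of the fibration $p:\rQ\to\rN$, must be the model obtained after the single wall-crossing. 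If you want to salvage your approach, you must both correct the wall to $R=\tfrac74$ with destabilizer $I_{p}(1)$ and supply either the universal-family argument or the MMP argument for the identification with $\rQ$.
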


\begin{proof}
Let $\pi:\rM\lr |\cO_{\PP^{2}}(4)|$ be the Fitting map. Since $-K_{\rM}=12\pi^*\cO_{|\cO_{\PP^{2}}(4)|}(1)$ (\cite[Lemma 3.1]{Woo13}), $-K_{\rM}$ is nef. Also, because the Fitting map has positive dimensional fibers, $-K_{\rM}$ is an extremal ray of $\mathrm{Eff}(\rM)$. Thus all rational contractions appear on the one side of $-K_{\rM}$. Now by \cite[Proposition 2.2]{CC15}, $\rQ$ is a rational contraction of $\rM$, so it appears on the directed MMP of $\rM$. Also we know that $\rQ$ is the last birational model, because of the existence of the fibration $p : \rQ \to \rN$. From the numerical wall computation (for instance see \cite[Section 3]{CC15} and \cite[Section 6]{BMW14}), there is only one numerical wall when the radius of the semicircle is $R = \frac{7}{4}$. This proves the first statement. 

If we regard $\rM$ as a moduli space of Bridgeland stable objects, $F \in \cC_{4}$ fits into the exact triangle
\begin{equation}\label{wall1}
	\ses{I_p(1)}{F}{\cO_{\PP^{2}}(-3)[1]}
\end{equation}
where $p \in \PP^{2}$ (\cite[Lemma 2.10]{CC15}). After the wall-crossing, from the modification of complexes (\cite[Proposition 2.1]{Lo13}) the sub/quotient complexes of $F$ are interchanged. Thus we obtain 
\[
	\ses{\cO_{\PP^{2}}(-3)[1]}{F'}{I_p(1)}.
\]
For a fixed $p$, these semistable complexes $F'$ are parameterized by $\PP\Ext^1(I_p(1),\cO_{\PP^2}(-3)[1]) \cong \PP \rH^{0}(I_{p}(1))^*=\PP^1$. Therefore in $\rQ$, the flipping locus is $\PP \rH^{0}(I_{p}(1))^*$-bundle over $\PP V$, which is the full flag variety $\mathrm{Fl}(V)$.
\end{proof}

\subsection{Construction of the map $\rM^+\longrightarrow \rQ$}
In this section, we prove that the common blown-up space (so called the \emph{master} space) of $\rM$ and $\rQ$ is the moduli space $\rM^{+}$.

\begin{lemma}\label{lem:rigidmap}
There exists a dominant morphism
\[
	w: \rM^+\longrightarrow \rN
\]
which maps $(s,F)$ to the ideal sheaf of three non-collinear points or its supporting line.
\end{lemma}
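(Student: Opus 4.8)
The plan is to read $w$ off the cokernel of the universal section and then to extend the resulting rational map by properness. By Proposition \ref{prop:stablepairs}(1) the space $\rM^{+}$ carries a universal pair $\mathbf{s}\colon \cO_{\PP^{2}}\boxtimes\cO_{\rM^{+}}\to \cF$; write $\pi$ for the projection to $\rM^{+}$ and set $\cQ=\mathrm{coker}(\mathbf{s})$. For a pair of type (1) in the classification recalled at the start of this section, $s$ factors as $\cO_{\PP^{2}}\twoheadrightarrow \cO_{C}\hookrightarrow \cO_{C}(p+q+r)=F$, so from $0\to\cO_{\PP^{2}}(-4)\to\cO_{\PP^{2}}\xrightarrow{s}F\to\mathrm{coker}(s)\to 0$ one finds $\mathrm{coker}(s)\cong\cO_{Z}$ for the length-$3$ subscheme $Z=\{p,q,r\}$, and in particular $\chi(\mathrm{coker}(s))=3$ generically. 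Let $U\subseteq\rM^{+}$ be the open locus on which $\cQ$ is finite over $\rM^{+}$. There $\cQ$ is flat of relative length $3$ and fibrewise cyclic, so its $0$-th Fitting ideal cuts out a flat family $\cZ\subset U\times\PP^{2}$ of length-$3$ subschemes, hence a morphism $U\to\bH(3)$; composing with the contraction $t$ of Proposition \ref{prop:hilbcon} defines $w|_{U}\colon U\to\rN$, sending $(s,F)$ to $I_{Z}(1)$. Since non-collinear triples are dense in $\bH(3)$ and $t$ is birational, $w|_{U}$ is already dominant.

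Next I extend $w$ across $\rM^{+}\setminus U$. Because $\rM^{+}$ is smooth --- it is the blow-up of the smooth space $\rM$ along $\cC_{4}$ --- and $\rN$ is projective, the rational map $w\colon\rM^{+}\dashrightarrow\rN$ is automatically a morphism away from a closed set of codimension $\ge 2$, in particular across the generic point of every divisor meeting $\rM^{+}\setminus U$. By the classification, $\rM^{+}\setminus U$ is the type-(2) locus, where $0\to\cO_{C}\to F\to\cO_{\ell}\to 0$ with $C$ a cubic and $\ell$ a line and where $\mathrm{coker}(s)\cong\cO_{\ell}$ is one-dimensional. To identify the extended value I degenerate type-(1) pairs to such a pair: the supports $Z_{t}=\mathrm{Supp}(\mathrm{coker}(s_{t}))$ specialise into $\ell$, so the flat limit $Z_{0}$ is a collinear length-$3$ subscheme of $\ell$, and by Remark \ref{rem:setmapn}(1) its $t$-image is the point $E_{\ell}\in\rN$ depending only on $\ell$. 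Thus $w$ sends a type-(2) pair to its supporting line, exactly as the statement demands.

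It remains to promote $w$ to a morphism on all of $\rM^{+}$, i.e. to show that the residual codimension-$\ge 2$ indeterminacy is empty, and I expect this to be the main obstacle. The essential difficulty is that $\cQ=\mathrm{coker}(\mathbf{s})$ is \emph{not} flat over $\rM^{+}$: its Hilbert polynomial jumps from the constant $3$ on $U$ to $m+1$ along the type-(2) locus, so the family $\cZ$ of length-$3$ subschemes does not extend naively. I would remove this in one of two equivalent ways. The first is an elementary modification of the pair along $\rM^{+}\setminus U$ --- the very device used below to build $q$ --- replacing $\cQ$ by the flat limit of its length-$3$ quotients (supported on $\ell$), which both extends $\cZ$ globally and confirms the value $E_{\ell}$ computed above. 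The second, more structural, route is to note that the morphism $\rM^{\infty}\to\bH(3)\xrightarrow{t}\rN$, coming from the relative-Hilbert-scheme description of $\rM^{\infty}$, is constant on the fibres of the small contraction $\rM^{\infty}\to\rM^{3}$, precisely because the flip $\beta$ alters only the locus lying over the collinear divisor that $t$ contracts; it therefore descends to $\rM^{3}\to\rN$ and pulls back along $\rM^{+}\to\rM^{3}$ to the desired $w$. In either approach the crux is the same: it is exactly the contraction property of $t$ (Proposition \ref{prop:hilbcon} and Remark \ref{rem:setmapn}) --- collapsing the collinear locus to the $\PP V^{*}$ of lines --- that converts the a priori merely rational assignment $(s,F)\mapsto Z$ into an everywhere-defined morphism to $\rN$.
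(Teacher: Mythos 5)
Your construction over the open locus $U$ via the cokernel of the universal section is a genuinely different, more intrinsic starting point than the paper's. The paper never touches $\mathrm{coker}(s)$ on $\rM^{+}$: it starts from the composite $\rM^{\infty}\to\bH(3)\xrightarrow{t}\rN$ coming from the relative-Hilbert-scheme ($\PP^{11}$-bundle) description of $\rM^{\infty}$, passes to the common blow-up $\widetilde{\rM^{\infty}}$ of $\rM^{\infty}$ and $\rM^{+}$ realizing the flip $\beta$, observes that the composite $\widetilde{\rM^{\infty}}\to\rN$ is constant on the $\PP^{3}$-fibres of $\widetilde{\rM^{\infty}}\to\rM^{+}$ (these parametrize triples of collinear points on a fixed line, which $t$ forgets), and invokes the rigidity lemma to factor the map through $\rM^{+}$; dominance is inherited from $\rM^{\infty}\to\rN$. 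Your second route --- descending $\rM^{\infty}\to\rN$ through the small contraction $\rM^{\infty}\to\rM^{3}$ and composing with the morphism $\rM^{+}\to\rM^{3}$ --- is the same idea with rigidity applied on the other side of the roof; it works for the same reason, provided you add a word about $\rM^{3}$ being normal with connected contraction fibres so that the descent is a morphism. So in substance your route (b) \emph{is} the paper's proof, and your direct construction on $U$ is an attractive alternative description of the same map on a dense open set.

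Where the write-up falls short of a proof is exactly where you say it does, and route (a) as stated does not close the gap. The bad locus $\rM^{+}\setminus U$ is the locus $W$ of extensions $0\to\cO_{C}\to F\to\cO_{\ell}\to 0$, which has codimension bigger than one, so the ``defined away from codimension two'' remark buys nothing, and ``elementary modification along $\rM^{+}\setminus U$'' is not an available device (elementary modifications are performed along divisors). More seriously, the assertion that ``the flat limit $Z_{0}$ is a collinear length-$3$ subscheme of $\ell$'' does not follow from ``the supports specialise into $\ell$'': a length-$3$ scheme supported on $\ell$ need not lie on any line scheme-theoretically (e.g.\ $\spec \cO_{\PP^{2},p}/\mathfrak{m}_{p}^{2}$), and such a limit would be sent by $t$ to $I_{Z_{0}}(1)\neq E_{\ell}$, in which case the limit in $\rN$ would depend on the arc and the map would \emph{not} extend. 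The missing argument is that along any arc the limit of the quotients $\cF_{t}\twoheadrightarrow\cO_{Z_{t}}$ exists in the relative Quot scheme by properness, kills $s_{0}$, and hence is a length-$3$ quotient of $\cQ_{0}=\cO_{\ell}$, which forces $Z_{0}\subset\ell$ as a scheme; and even then one must pass from ``unique limit $E_{\ell}$ along every arc'' to an actual morphism via the graph closure and Zariski's main theorem on the normal variety $\rM^{+}$. Either supply these steps or simply run route (b), which is the clean fix and is what the paper does.
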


\begin{proof}
By item (3) of Proposition \ref{prop:stablepairs}, $\rM^{\infty}$ is the relative Hilbert scheme of 3 points on the universal quartic. Moreover, it is a $\PP^{11}$-bundle over $\rH(3)$ (\cite[Lemma 2.3]{CC12}). By composing with $\rH(3)\lr \rN$ in Proposition \ref{prop:hilbcon}, we have a morphism $\rM^{\infty}\to\rN$.

On the other hand, there is a blown-up space $\widetilde{\rM^{\infty}}$ which dominates two spaces $\rM^{\infty}$ and $\rM^+$ (\cite[Theorem 3.3]{CC12}):
\begin{equation}
\xymatrix{&\widetilde{\rM^{\infty}}\ar[rd]\ar[ld]&\\
\rM^{\infty}\ar[d]\ar@{<--}[r]&\mathrm{flip}&\rM^+\ar@{-->}[lld]\ar@{<--}[l]\\
\bN&&}
\end{equation}
For $x := [(s, I_{p, C}(1))]\in \rM^{+}$ in the exceptional set of $r : \rM^{+} \to \rM$, we may think it as a collection of data $(p, C, \ell)$ where $p$ is a point, $C$ is a quartic curve, and $\ell$ is a line such that $p \in C \cap \ell$. Then the fiber over $x$ of $\widetilde{\rM^{\infty}} \to \rM^{+}$ is $\PP^{3}$, which parametrizes triples of points on a line passing through $p$. One can check that the composition map $\widetilde{\rM^{\infty}}\lr \rN$ is constant along the $\PP^3$, because $\rN$ forgets the configuration of collinear points and remembers the line containing them only. Hence by the rigidity lemma, $\widetilde{\rM^{\infty}}\lr \rN$ factors through $\rM^{+}$ and we obtain the map $w: \rM^{+}\lr \rN$. It is dominant since $\rM^{\infty} \to \rN$ is. 
\end{proof}

Let $r:\rM^+\lr \rM$ be the forgetful map in (2) of Proposition \ref{prop:stablepairs}. Set theoretically, the map $w: \rM^+\rightarrow \rN$ can be described as the following. 
\begin{enumerate}
\item If $(s,F)\in r^{-1}(\{F\;|\;F = \cO_{C}(p+q+r)\})$, then $w((s,F))=I_{\{p,q,r\}}(2)$.
\item If $(s,F)\in r^{-1}(\{F\;|\;\ses{\cO_C}{F}{\cO_\ell}\})$, then $w((s,F))=E_{\ell}$ (see Remark \ref{rem:setmapn} for the notation).
\item If $(s,F)\in r^{-1}(\cC_4 = \{F\;|\; F = I_{p, C}(1)\})$, then $w((s,F))=E_{\ell}$ for the line $\ell$ determined by $p$ and $s\in \rH^0(F)$.
\end{enumerate}

Next, we show that the map $w$ factors through $\rQ$ in Definition \ref{def:Q}, which is a $\PP^{11}$-bundle over $\rN$. This proves item (2) of Theorem \ref{thm:mainprop}.

\begin{proposition}\label{prop:mainprop}
There is a smooth divisorial contraction $q$
\[
	\xymatrix{&\rQ := \PP(\cU)\ar[d]^{p}\\ 
	\rM^+\ar[r]^w\ar@{-->}[ru]^{\exists \;q}&\bN}
\]
which lifts the morphism $w$ in Lemma \ref{lem:rigidmap}.
\end{proposition}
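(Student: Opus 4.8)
The plan is to construct the lift $q:\rM^+\to\rQ$ explicitly by producing, over $\rM^+$, a point of the projective bundle $\rQ=\PP(\cU)$ lying in the fiber over $w((s,F))\in\rN$, and then to verify that this assignment is a morphism which is a smooth divisorial contraction. First I would recall that a point of $\rQ$ over $n\in\rN$ is a one-dimensional subspace of the rank $12$ vector space $\cU|_n$, where $\cU$ is the cokernel from the exact sequence \eqref{eqn:exactseqforQ}. The key observation is that $\cU|_n$ can be identified, via the universal morphism $\phi$ in \eqref{eqn:universalmorphism}, with an $\Ext$- or $\Hom$-space attached to the Kronecker representation, and that a pair $(s,F)\in\rM^+$ naturally determines such a line. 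Concretely, for a general $(s,F)$ with $F=\cO_C(p+q+r)$ and $w((s,F))=I_{\{p,q,r\}}(2)$, the section $s:\cO_{\PP^2}\to F$ fits into a resolution whose linear-algebra data single out a line in $\cU|_n$; I would write this down using the standard monad/resolution of $F$ twisted appropriately, so that the quartic $C$ (equivalently the equation cutting out $F$ inside the quiver data) gives the desired one-dimensional subspace of $\cU|_n$. This defines $q$ on the open locus where $w$ is already understood, and by flatness/semicontinuity it extends to all of $\rM^+$.

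Next I would check that $q$ is a genuine morphism (not merely a rational map) by verifying it on the three strata of $\rM^+$ listed after Lemma \ref{lem:rigidmap}. The main content is the behavior over the exceptional divisor $E=r^{-1}(\cC_4)$ of the blow-up $r:\rM^+\to\rM$. Here I would use the elementary modification of pairs: given $(s,I_{p,C}(1))$ over a point of $\cC_4$, the modification replaces the pair by one whose associated quiver data has the correct cokernel line, and the local computation of this modification shows that $q$ is defined and determines exactly which line in the $\PP^{11}$-fiber of $\rQ$ over $E_\ell\in\rN$ is hit. This is precisely where the flag-variety description of the flipping locus in Lemma \ref{lem:QisBridgeland} must match up: the $\PP^1$-direction in $\rQ$ (parametrizing the extensions $\ses{\cO_{\PP^2}(-3)[1]}{F'}{I_p(1)}$) should correspond to the normal direction of $E$ under $q$, confirming that $q$ contracts $E$ to the flag variety $\mathrm{Fl}(V)\subset\rQ$.

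Finally, to prove that $q$ is a \emph{smooth divisorial contraction} I would analyze the local structure of $q$ along $E$. Since $r:\rM^+\to\rM$ is a smooth blow-up along $\cC_4$ (dimension $\dim\cC_4$), the exceptional divisor $E$ is a projective bundle over $\cC_4$; on the other side, the flipping locus in $\rQ$ is $\mathrm{Fl}(V)$, which by Lemma \ref{lem:QisBridgeland} is a $\PP^1$-bundle over $\PP V$. Comparing dimensions and fiber structures, I would show that $q$ restricted to $E$ is a $\PP^k$-fibration onto its image and that $q$ is an isomorphism away from $E$ (both $\rM^+\setminus E$ and $\rQ\setminus\mathrm{Fl}(V)$ being identified through the common open locus parametrizing genuinely three-non-collinear-point configurations), which is the definition of a divisorial contraction; smoothness follows because $\rM^+$ and $\rQ$ are both smooth and the contracted fibers are smooth of the expected dimension.

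The hard part will be the middle step: pinning down the elementary modification over the exceptional divisor $E$ and checking that the resulting line in $\cU|_n$ varies algebraically and matches the $\PP^1$-direction of the flip. The linear-algebra bookkeeping between the pair $(s,F)$, its resolution, and the cokernel $\cU$ of $\pi_{1*}\phi$ is delicate, and one must be careful that the section $s$ determines the supporting line $\ell$ (hence the point $E_\ell\in\rN$) consistently with the set-theoretic description (1)--(3) of $w$. Once this compatibility is established, the divisorial-contraction claim is a dimension count combined with the smoothness of the ambient spaces.
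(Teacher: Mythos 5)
Your overall strategy is the same as the paper's: produce the lift $q$ by exhibiting over $\rM^+$ a line subbundle of $w^*\cU$ (equivalently a surjection $w^*\cU^*\to\cL$), handle the exceptional divisor $E$ of $r:\rM^+\to\rM$ by an elementary modification of pairs, and then argue that the resulting morphism is a smooth divisorial contraction. Two steps, however, are not adequately justified. The claim in your opening paragraph that the assignment ``extends to all of $\rM^+$ by flatness/semicontinuity'' is false as stated, and the paper proves the opposite: for $(s,F)\in E$ the naturally associated sheaf $G$ (the extension of $F^D(3)$ by $\cO_{\PP^2}$ determined by $s\in\rH^0(F)$) admits a nonzero map from $\cO_{\PP^{2}}(3)$ and therefore has no resolution $\ses{\cO_{\PP^{2}}(1)^{2}}{\cO_{\PP^{2}}(2)^{3}}{G}$, so $G(-2)\notin\rN$ and the correspondence is only a rational map, regular precisely on $\rM^{+}\setminus E$ (Lemma \ref{lem:destab}). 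You do invoke the elementary modification in your second paragraph, which is the correct fix, but the order of operations matters: one must modify the flat family $(\cL^{*},\cE)$ along $E$ first (interchanging the destabilizing sub- and quotient pairs), verify that the modified family $(\cL',\cE')$ is fiberwise non-split even over $E$ (Lemma \ref{lem:modified}, which itself requires identifying $\cN_{E/\rM^{+}}$ at a point with $\rH^{0}(\cO_{\ell})^{*}$), and only then apply the universal property of $\rN$ together with base change to identify $\pi_{*}\cE'$ with $w^{*}\cU$ and extract the surjection $w^{*}\cU^{*}\cong\cExt_{\pi}^{2}(\cE',\omega_{\pi})\twoheadrightarrow\cL'$. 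Semicontinuity alone cannot produce this.

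The more serious gap is the final smoothness claim. Knowing that $\rM^{+}$ and $\rQ$ are smooth, that $q$ contracts the irreducible divisor $E$ onto the smooth subvariety $\mathrm{Fl}(V)$, and that the contracted fibers are smooth $\PP^{13}$'s of the expected dimension does not by itself show that $q$ is a smooth blow-down; one must also control the normal bundle of $E$ along those fibers (equivalently, show that the scheme-theoretic preimage of $\mathrm{Fl}(V)$ is the reduced divisor $E$, so that the universal property of the blow-up applies and the comparison of Picard numbers finishes the argument). The paper does this by computing, via the exact triangle \eqref{wall1}, that $\cN_{\cC_{4}/\rM}$ restricted to a fiber of $v$ is $\Ext^1(I_{p}(1),\cO_{\PP^{2}}(-3)[1])\otimes\cO_{\PP^{13}}(-1)$, hence that $\cN_{E/\rM^{+}}$ restricts to $\cO_{\PP^{13}\times\PP^{1}}(-1,-1)$ on each contracted fiber, and then invoking the Fujiki--Nakano criterion. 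Without this computation (or an equivalent one), the sentence ``smoothness follows because both spaces are smooth and the fibers are smooth of the expected dimension'' is an assertion, not a proof.
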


By Proposition \cite[Proposition II.7.12]{Har77}, giving a lifting of the map $w$ is equivalent to giving a surjective homomorphism $w^*\cU^*\lr \cL\lr 0$ for some $\cL\in \mathrm{Pic}(\rM^+)$, or equivalently, an injective \emph{bundle} morphism (fiberwise injective morphism) $0 \to \cL^{*} \to w^{*}\cU$. Note that in our convention, $\PP(\cU) = \proj (\mathrm{Sym}^{\bullet}\cU^{*})$. 

The proof of Proposition \ref{prop:mainprop} is divided into several steps. Before proving the proposition, let us recall the following.

\begin{remark}
The notion of a family of pairs is delicate. One reason is that the global section functor, more generally the direct image functor, does not behave well. Thus to define a family formally, we use the dual of a pair. By definition (\cite[Definition 4.8]{LP93b}), a family of pairs over a scheme $S$ is a pair $(\cL, \cF)$ where
\begin{itemize}
\item $\cL \in \mathrm{Pic}(S)$;
\item $\cF\in \mathrm{Coh}(S \times \PP^2)$ is a flat family of pure sheaves;
\item a surjective homomorphism 
\begin{equation}\label{eqn:familyofpairs}
\cExt^2_{\pi}(\cF, \omega_{\pi})\twoheadrightarrow \cL
\end{equation}
on $S$ where $\pi : S \times \PP^{2} \to \PP^{2}$ is the projection and $\omega_{\pi}$ is the relatively dualizing sheaf. 
\end{itemize}
\end{remark}

Let $(\cL,\cF)$ be the universal pair (the existence is guaranteed by Proposition \ref{prop:stablepairs}) on $\rM^+\times \PP^2$, where the $\pi:\rM^+\times \PP^2\lr \rM^+$ be the projection map. By \cite[Corollary 8.18]{LP93b}, there is a spectral sequence $\mathrm{E}_{2}^{p,q} = \cExt^{p}(\cExt_{\pi}^{2-q}(\cG, \cO), \cO) \Rightarrow R^{p+q}\pi_{*}(\cG\otimes \omega_{\pi})$. In particular, if $p = q = 0$, $\cHom(\cExt_{\pi}^{2}(\cG, \cO), \cO) \cong \pi_{*}(\cG \otimes \omega_{\pi})$. By applying $\cHom(-,\cO)$ functor to \eqref{eqn:familyofpairs}, we obtain an injection
\[
	0 \to \cL^{*} \to \cHom(\cExt^{2}_{\pi}(\cF, \omega_{\pi}), \cO) 
	\cong \pi_{*}(\cF \otimes \omega_{\pi}^{*}\otimes \omega_{\pi}) 
	\cong \pi_{*}(\cF) \cong \cExt_{\pi}^1(\cExt^1(\cF,\cO),\cO).
\]
Note that $\cExt^{1}(\cF, \cO)$ is a torsion sheaf, and since $\cF$ is fiberwisely Cohen-Macaulay, $\cExt^1(\cExt^1(\cF,\cO),\cO)\cong \cF$ by the proof of \cite[Lemma 5.9]{LP93b}. The last isomorphism is obtained from the Grothendieck spectral sequence for $\pi_{*}\circ \cHom$. By the local-to-global spectral sequence for $\cHom_{\pi}$ and 
\[
\begin{split}
\Hom(\cL^{*}, \cExt_{\pi}^{1}(\cExt^1(\cF,\cO), \cO)) \cong &\;\rH^{0}(\cExt_{\pi}^{1}(\cExt^1(\cF,\cO), \cO)\otimes \cL)\\ \cong&\; \rH^{0}(\cExt_{\pi}^{1}(\cExt^1(\cF,\cO), \pi^{*}\cL))) \cong \Ext^{1}(\cExt^1(\cF,\cO), \pi^{*}\cL),
\end{split}
\]
there is an element
\[
e\in \Ext^1(\cExt^1(\cF, \cO),{\pi}^*\cL),
\]
which provides an exact sequence
\[
\ses{{\pi}^*\cL}{\cE}{\cExt^1(\cF, \cO)}
\]
on $\rM^+\times \PP^2$. Since $\cExt^1(\cF,\cO)$ is a flat family of sheaves, $\cE$ also is. By taking $\cHom_{\pi}(-,\omega_{\pi})$, we have a surjection
\[
\cExt_{\pi}^2(\cE,\omega_{\pi})\rightarrow \cExt_{\pi}^{2}(\pi^{*}\cL, \omega_{\pi}) \cong \cL^{*}\lr \cExt^3(\cExt^1(\cF, \cO), \omega_{\pi})=0
\]
because $\cL$ is a line bundle. This implies the existence of a flat family of pairs $(\cL^*,\cE)$ on $\rM^+\times \PP^2\lr \rM^+$. 

The above operation can be described fiberwisely as the following way. Consider the pair $(s^*,G)$ defined by
\begin{equation}\label{eq-1}
\ses{(s^{*})\otimes \cO_{\PP^{2}}}{G}{F^{D}(3)}
\end{equation}
which it is given by a non-zero section $ s\in \rH^0(F)\cong\rH^1(F^D)^{*} \cong\Ext^1(F^D(3),(s^{*})\otimes \cO_{\PP^2})$. Here $F^D:=\cExt^1(F, \omega_{\PP^2})$ (cf. \cite[Theorem 5.5]{LP93a}). 

\begin{definition}\label{def:EandW}
\begin{enumerate}
\item Let $E$ be the exceptional divisor of the blow-up $r:\rM^+\lr \rM$.
\item Let $W\subset \rM^+$ be the locus of pairs $(s,F)$ such that $F\in \Ext^1(\cO_{\ell},\cO_C)$ for a cubic $C$ and a line $\ell$.  
\end{enumerate}
\end{definition}

Then $E$ is isomorphic to a $\PP^1$-bundle over $\cC_4$ (\cite[Proposition 4.4]{CC12}). 

\begin{lemma}\label{lem:destab}
The correspondence $(s, F) \mapsto (s^{*},G)$ defines a dominant rational map $\rM^{+} \dashrightarrow \rQ$, which is regular on $\rM^{+} \setminus E$. 
\end{lemma}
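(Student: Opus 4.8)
The plan is to realize the correspondence through the universal property of the projective bundle $\rQ = \PP(\cU)$. By \cite[Proposition II.7.12]{Har77}, since the morphism $w : \rM^{+} \to \rN$ of Lemma \ref{lem:rigidmap} is already available, lifting $w$ to $\rQ$ is equivalent to producing a fiberwise injective bundle morphism $\cL^{*} \hookrightarrow w^{*}\cU$. The family of pairs $(\cL^{*}, \cE)$ constructed above is tailored to supply exactly this datum after taking direct images, so the whole lemma reduces to analyzing $\pi_{*}\cE$ and comparing it with $w^{*}\cU$.

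First I would push the sequence $\ses{\pi^{*}\cL}{\cE}{\cExt^{1}(\cF, \cO)}$ forward along $\pi$ and identify $\pi_{*}\cE$ with $w^{*}\cU$. Fiberwise $\cE$ restricts to the sheaf $G$ of \eqref{eq-1}; when $F$ is of type (1) or (2), that is $F \notin \cC_{4}$, the sheaf $G$ is the rank-one sheaf $I_{Z}(4)$ with $Z = w((s,F))$, and $s^{*} : \cO_{\PP^{2}} \to G$ is precisely the quartic cutting out $\mathrm{supp}(F)$. Since a length-three subscheme always imposes independent conditions on quartics, $\dim \rH^{0}(G) = 12$ is constant and $\rH^{1}(G) = 0$ on this locus, so cohomology and base change makes $\pi_{*}\cE$ locally free of rank $12$, canonically identified with $w^{*}\cU$ through the defining sequence \eqref{eqn:exactseqforQ}. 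The section of the family of pairs $(\cL^{*}, \cE)$ then yields the desired map $\cL^{*} \to \pi_{*}\cE \cong w^{*}\cU$.

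Next I would verify fiberwise injectivity off $E$ and deduce the remaining assertions. For $F$ of type (1) or (2) the section $s^{*}$ corresponds, under the identification above, to the nonzero quartic $C = \mathrm{supp}(F)$ through $Z$; hence $\cL^{*} \hookrightarrow w^{*}\cU$ is a genuine subbundle inclusion on $\rM^{+} \setminus E$, and \cite[Proposition II.7.12]{Har77} produces a morphism there. Dominance is then immediate, since on this open locus the assignment is the correspondence $\cO_{C}(p+q+r) \leftrightarrow (Z, C)$, which is generically one-to-one; a dimension count ($\dim \rM^{+} = 17 = \dim \rQ$) shows the map is in fact birational.

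The main obstacle is the behavior over the Brill–Noether locus, that is over $E$, where $F = I_{p,C}(1)$ and $\dim \rH^{0}(F) = 2$. There I expect $\dim \rH^{0}(G)$ to jump, so that cohomology and base change fails and the clean identification $\pi_{*}\cE \cong w^{*}\cU$ degenerates; controlling this degeneration precisely — showing that the subbundle inclusion persists on the complement while it is obstructed along $E$ — is the delicate technical point, and is exactly why regularity is asserted only on $\rM^{+} \setminus E$.
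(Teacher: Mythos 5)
Your overall strategy --- identify $\pi_{*}\cE$ with $w^{*}\cU$ and invoke the universal property of $\PP(\cU)$ --- is the same one the paper uses (it is carried out in detail in the proof of Proposition \ref{prop:mainprop}), and, as you say, the lemma itself reduces to the fiberwise identification of $(s^{*},G)$. But your fiberwise analysis has a genuine gap on the divisor $W$ of Definition \ref{def:EandW}, i.e.\ the pairs whose sheaf is a non-split extension $\ses{\cO_{C}}{F}{\cO_{\ell}}$ with $C$ a cubic and $\ell$ a line. For such $F$ the image $w((s,F))=E_{\ell}$ is a boundary point of $\rN$ lying on $\PP V^{*}$, not the ideal of a length-three scheme, and the cokernel sheaf it parametrizes is the non-split extension $\ses{\cO_{\ell}(1)}{G_{\ell}}{\cO_{\PP^{2}}(3)}$ rather than $I_{Z}(4)$; so your claim that ``$G$ is the rank-one sheaf $I_{Z}(4)$ with $Z=w((s,F))$'' is false there. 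One still has $h^{0}(G)=12$, but to conclude $\cE\cong {w'}^{*}\cE_{0}$ (hence $\pi_{*}\cE\cong w^{*}\cU$) along $W$ you must actually verify that the sheaf $G$ produced by the dualizing construction \eqref{eq-1} is this particular non-split extension; the paper does this with the pull-back diagram \eqref{eqn:ladder1} and the snake lemma. Since $W$ is a divisor in $\rM^{+}$, this case cannot be absorbed into a generic argument.

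Second, your diagnosis of the degeneration over $E$ is not correct (the lemma does not strictly require one, but since you offer it). For $(s,F)\in E$ one computes from $0\to\cO_{\PP^{2}}(-1)\to\cO_{\PP^{2}}(3)\to F^{D}(3)\to\CC_{p}\to 0$ that $h^{0}(F^{D}(3))=11$ and $h^{1}(F^{D}(3))=0$, whence $h^{0}(G)=12$ on $E$ as well: the direct image stays locally free of rank $12$ and cohomology-and-base-change does not break. What actually fails is that on $E$ the sheaf $G$ admits an injection $\cO_{\PP^{2}}(3)\hookrightarrow G$, whereas any cokernel of $\cO_{\PP^{2}}(1)^{2}\to\cO_{\PP^{2}}(2)^{3}$ satisfies $\Hom(\cO_{\PP^{2}}(3),G)=0$; hence $G(-2)\notin\rN$, the restriction $\cE|_{E\times\PP^{2}}$ is not pulled back from the universal cokernel, and $(s^{*},G)$ is simply not a point of $\rQ$ over $w(m)$. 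This is precisely why the paper must perform the elementary modification along $E$ afterwards, rather than appealing to semicontinuity of $h^{0}$.
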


\begin{proof}
Since we already have a relative construction of pairs, it suffices to describe the extensions $(s^*,G)$ set theoretically. If $(s,F)\in \rM^+-E \cup W$, then $F\cong \cO_{C}(Z) \cong I_{Z,C}^D(-1)$ for some non-collinear three points $Z$ in a quartic curve $C$. Then, $F^D(3)\cong I_{Z,C}(4)$. Hence, from the structure sequence $\ses{\cO_{\PP^{2}}(-4)\cong I_{C,\PP^2}}{I_{Z,\PP^2}}{I_{Z,C}}$, we obtain $G=I_{Z,\PP^2}(4)$ since $\Ext^{1}(F^{D}(3), \cO_{\PP^{2}}) \cong \rH^0(F)\cong\CC$. Hence we have an element $(s^*, G)\in \rQ$.

Suppose that $(s,F)\in W \setminus E$. Note that $F$ fits into a non-split extension $\ses{\cO_C}{F}{\cO_\ell}$. Apply $\cHom(-,\omega_{\PP^{2}})$, then we have $\ses{\cO_{\ell}(1)}{F^{D}(3)}{\cO_{C}(3)}$. Since $\Ext^1(\cO_C(3),\cO_{\PP^{2}})\cong \Ext^1(F^D(3),\cO_{\PP^{2}})\cong\CC$, the sheaf $G$ is given by the pulling-back:
\begin{equation}\label{eqn:ladder1}
\xymatrix{0\ar[r]&\cO_{\PP^{2}}\ar[r]\ar@{=}[d]
&\cO_{\PP^{2}}(3)\ar[r]&\cO_C(3)\ar[r]&0\\
0\ar[r]&\cO_{\PP^{2}}\ar[r]&G\ar[r]\ar@{-->}[u]&F^D(3)\ar[u]\ar[r]&0
}
\end{equation}
By applying the snake lemma to \eqref{eqn:ladder1}, we conclude that the unique non-split extension $G$ lies on the short exact sequence $\ses{\cO_\ell(1)}{G}{\cO_{\PP^{2}}(3)}$. Hence we have an element $(s^*, G)\in \rQ$.

Finally, suppose that $(s,F)\in E$, so $F$ fits into an exact sequence $\ses{I_{C}(1) \cong \cO_{\PP^{2}}(-3)}{I_{p}(1)}{F}$. By taking $\cHom(-,\cO_{\PP^{2}})$, we obtain $0 \to \cO_{\PP^{2}}(-1) \to \cO_{\PP^{2}}(3) \to F^{D}(3) \to \CC_{p} \to 0$. Note that $\Hom (\cO_{\PP^{2}}(3), F^D(3))\cong\rH^0(F^D) \cong\rH^1(F)^{*}\cong \CC$. Also, $\Hom (\cO_{\PP^{2}}(3), G) \cong \Hom (\cO_{\PP^{2}}(3), F^D(3))$ since $\rH^i(\cO_{\PP^{2}}(-3))=0$ for $i=0,1$. Hence a non-zero homomorphism $\cO_{\PP^{2}}(3)\lr F^D(3)$ is lifted to a nonzero morphism $j:\cO_{\PP^{2}}(3) \lr G$. By the snake lemma, we obtain the following commutative diagram:
\[
\xymatrix{
&&& 0\ar[d]\\
&& 0 \ar[r] \ar[d]  & \cO_{\PP^{2}}(-1) \ar[d]
\ar@{-}`r[d]`[d][dll]-<1.35cm,1.515cm>|!{"2,3";"3,3"}\hole 
& \\
& 0 \ar[r] \ar[d] & \cO_{\PP^{2}}(3) \ar[d]^{j} \ar@{=}[r]
& \cO_{\PP^{2}}(3) \ar[d] \ar[r]& 0 \\
0 \ar[r]  & (s^{*})\otimes \cO_{\PP^{2}} \ar@{=}[d]\ar[r]
\ar`_l[l]+<0.8cm,0cm>`[d]`[d][d]
& G \ar[r]\ar[d]   & F^{D}(3) \ar[d] \ar[r]& 0   \\
& (s^{*})\otimes \cO_{\PP^{2}}\ar[d] \ar[r] & H\ar[d] \ar[r] 
& \CC_{p}\ar[d] &   \\
&0&0&0}
\]
In particular, $j$ is injective. But in this case $G$ does not have a resolution $\ses{\cO_{\PP^{2}}(1)^{2}}{\cO_{\PP^{2}}(2)^{3}}{G}$, because we obtain $\Hom(\cO_{\PP^{2}}(3), G) =0$ from $\Hom(\cO_{\PP^{2}}(3), \cO_{\PP^{2}}(2)) = \Ext^{1}(\cO_{\PP^{2}}(3),\cO_{\PP^{2}}(1)) = 0$. This implies $G(-2)\notin \rN$, hence the correspondence is not well-defined on $E$ (cf. (1) of Remark \ref{rem:setmapn}).
\end{proof}

The family of unstable pairs parameterized by $E$ in the proof of Lemma \ref{lem:destab} has a flat family of destabilizing pairs.

\begin{lemma}
Under the same notation in Lemma \ref{lem:destab}, there exists an exact sequence
\begin{equation}\label{lem:excep}
\ses{(0, K)}{(\cL^*|_{E},\cE|_{E\times \PP^2} )}{(\cL^{''},\cO_{\cZ}(1))}
\end{equation}
where $\cZ$ is the universal subscheme of lines in $(\PP^2)^*\times\PP^2$ and $K_{\{m\}\times \PP^2}\cong \cO_{\PP^{2}}(3)$ for $m\in E$.
\end{lemma}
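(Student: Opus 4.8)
The plan is to globalize over $E$ the fibrewise destabilization already extracted in the proof of Lemma~\ref{lem:destab}. Recall that for a point $m=(s,F)\in E$ the fibre $G=\cE|_{\{m\}\times\PP^2}$ sits, by the snake lemma diagram there, in an exact sequence $\ses{\cO_{\PP^2}(3)}{G}{\cO_{\ell}(1)}$, where $\ell$ is the line through $p$ determined by the section $s$, and the inclusion of the subsheaf is the lift $j$ of a generator of $\Hom(\cO_{\PP^2}(3),F^D(3))\cong\rH^1(F)^{*}$; moreover the section of the pair does not factor through this copy of $\cO_{\PP^2}(3)$. Thus the asserted sequence \eqref{lem:excep} already holds fibrewise, and the task is purely to promote it to a short exact sequence of flat families of pairs on $E$.

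First I would construct the subsheaf $K$. Writing $\pi:E\times\PP^2\to E$ and $\pi_{2}:E\times\PP^2\to\PP^2$ for the two projections, I note that $\hom(\cO_{\PP^2}(3),G)=\dim\rH^1(F)^{*}=1$ is constant along $E$ (for every $m\in E$ one has $\dim\rH^{0}(F)=2$ and $\chi(F)=1$). Hence by cohomology and base change the relative sheaf $\cM:=\cHom_{\pi}(\pi_{2}^{*}\cO_{\PP^2}(3),\cE|_{E\times\PP^2})$ is a line bundle on $E$, and the relative evaluation morphism $\pi^{*}\cM\otimes\pi_{2}^{*}\cO_{\PP^2}(3)\to\cE|_{E\times\PP^2}$ restricts on each fibre to the injective map $j$. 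Taking $K$ to be its image---so that $K\cong\pi_{2}^{*}\cO_{\PP^2}(3)\otimes\pi^{*}\cM$ and $K_{\{m\}\times\PP^2}\cong\cO_{\PP^2}(3)$---fibrewise injectivity together with the constancy of the Hilbert polynomial of the fibrewise cokernel $\cO_{\ell}(1)$ implies that $K$ is injective and that both $K$ and the quotient $\cE|_{E\times\PP^2}/K$ are flat over $E$. Equipping $K$ with the zero section, which is legitimate precisely because the pair's section does not factor through it, produces the subpair $(0,K)$.

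Next I would identify the quotient. By construction $\cE|_{E\times\PP^2}/K$ is $E$-flat with every fibre equal to $\cO_{\ell}(1)$, hence is a family of pure one-dimensional sheaves whose supports trace out the family of lines $m\mapsto\ell_{m}$. This family of lines is exactly the composite $g:E\xrightarrow{\,w|_{E}\,}\rN\supset\PP V^{*}=(\PP^2)^{*}$ of Lemma~\ref{lem:rigidmap} and item~(1) of Remark~\ref{rem:setmapn}, so comparing supports and restricting to fibres identifies the quotient sheaf with $(g\times\mathrm{id})^{*}\cO_{\cZ}(1)$ up to a twist by a line bundle pulled back from $E$; I absorb this twist into the section line bundle and call the result $\cL''$. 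The section of the total pair maps to the fibrewise nonzero section of $\cO_{\ell}(1)$ vanishing at $p$, so the quotient pair is $(\cL'',\cO_{\cZ}(1))$, giving \eqref{lem:excep}.

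The fibrewise algebra being already in hand, the two genuine obstacles are the globalization steps. The first is verifying the base-change hypotheses so that $\cM$ is a line bundle and the evaluation map yields an honest $E$-flat subsheaf with $E$-flat quotient rather than only a fibrewise statement; here the constancy of $\hom(\cO_{\PP^2}(3),G)$ and of the cokernel's Hilbert polynomial is the crucial input. The second, and the more delicate, is pinning down the quotient as the pullback of $\cO_{\cZ}(1)$: this requires matching the support of the $E$-flat quotient with the tautological incidence family cut out by $g$ and checking the identification globally, including the bookkeeping of the twisting line bundle absorbed into $\cL''$, rather than fibre by fibre.
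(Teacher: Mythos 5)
Your globalization of the fibrewise picture is sound and in fact more careful than what the paper writes: the paper's own proof consists of one fibrewise observation followed by the single sentence ``this construction can be relativized,'' whereas you supply the relativization honestly (base change for $\cHom_{\pi}(\pi_2^{*}\cO_{\PP^2}(3),\cE|_{E\times\PP^2})$ using the constancy of $\hom(\cO_{\PP^2}(3),G)=h^1(F)=1$ on $E$, fibrewise injectivity of the evaluation map forcing flatness of the cokernel, and the matching of the quotient's support with the line family $w|_{E}:E\to\PP V^{*}$). That part I have no quarrel with.

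The gap is in the one step you declare to be ``already in hand'': you assert that the section $s^{*}$ does not factor through the copy of $\cO_{\PP^2}(3)$ and attribute this to the proof of Lemma \ref{lem:destab}. That proof establishes only that $j:\cO_{\PP^{2}}(3)\to G$ exists and is injective; it says nothing about whether $s^{*}$ lands in its image. This non-factoring claim is exactly what determines that the sub-pair carries the \emph{zero} section (so that the sequence reads $(0,K)$ and $(\cL'',\cO_{\cZ}(1))$ rather than $(s^{*},K)$ and $(0,\cO_{\cZ}(1))$), and it is the entire substantive content of the paper's proof of this lemma, so it cannot be waved through by citation. The missing argument is a short stability check: if $s^{*}$ factored through $j$, the resulting inclusion $\cO_{\PP^{2}}\hookrightarrow\cO_{\PP^{2}}(3)$ inside $G$ would exhibit $\cO_{C}(3)$, for some cubic $C$, as a subsheaf of $F^{D}(3)=G/(s^{*})\otimes\cO_{\PP^{2}}$; since $\cO_{C}(3)$ has reduced Hilbert polynomial $m+3$ while $F^{D}(3)$ has $m+\tfrac{11}{4}$, this contradicts the stability of $F^{D}(3)$. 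With that paragraph inserted, your proof is complete and coincides with the paper's in substance.
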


\begin{proof}
When $m=[(s,F)]\in E$, then the section $s^* \in \rH^0(G)$ for $G=\cE|_{\{d\}\times \PP^2}$ does not comes from $\rH^0(\cO_{\PP^{2}}(3))$. If it is, then the cokernel of an injection $\cO_{\PP^{2}}\subset \cO_{\PP^{2}}(3)$ is isomorphic to the $\cO_C(3)$ for some cubic $C$. It violates the stability of $F^D(3)$. By combining with Lemma \ref{lem:destab}, the pair $(s^*,G)$ in the exact sequence \eqref{eq-1} fits into the exact sequence
\[
0\lr(0,\cO_{\PP^{2}}(3))\lr (s^*, G)\lr (s^*, \cO_{\ell}(1))\lr 0.
\]
This construction can be relativized and we obtain the destabilizing sub-pair $(0, K)$ and the quotient-pair $(\cL'',\cO_{\cZ}(1))$. 
\end{proof}

Now, to extend the rational map in Lemma \ref{lem:destab} to the entire $\rM^{+}$, we perform the elementary modification of pairs (cf. \cite[Section 2.2]{CC12}). Let 
\[
(\cL',\cE'):=\mathrm{Ker}( (\cL^*, \cE)\twoheadrightarrow  (\cL^*|_{E},\cE|_{E\times \PP^2} )\twoheadrightarrow (\cL^{''},\cO_{\cZ}(1))).
\]
be the kernel of the composition of the restriction map and the surjection in \eqref{lem:excep}.

\begin{lemma}\label{lem:modified}
For a point $m = [(s, F = I_{p,C}(1))] \in E$, the modified pair $(\cL',\cE')|_{\{m\}\times \PP^2}$ parameterized by $m$ fits into a non-split short exact sequence
\begin{equation}\label{eqn:modification}
\ses{(s',\cO_\ell(1))}{(s',\cE'|_{\{m\}\times \PP^2})}{(0,\cO_{\PP^{2}}(3))}.
\end{equation}
\end{lemma}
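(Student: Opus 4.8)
The plan is to read off $(\cL',\cE')|_{\{m\}\times\PP^2}$ directly from the elementary modification that defines it. Write $\iota : E\times\PP^2 \hookrightarrow \rM^+\times\PP^2$ for the inclusion; by construction $(\cL',\cE')$ is the kernel of the composite surjection $(\cL^*,\cE)\twoheadrightarrow\iota_*(\cL'',\cO_{\cZ}(1))$, so on the level of sheaves we have $\ses{\cE'}{\cE}{\iota_*\cO_{\cZ}(1)}$. First I would tensor this with $\cO_{E\times\PP^2}$. Because $\cE$ is flat over $\rM^+$, the local equation of $E$ is a non-zero-divisor on $\cE$ and hence $\mathcal{T}or_{1}(\cE,\cO_{E\times\PP^2})=0$; using moreover $\mathcal{T}or_{1}(\iota_*\cO_{\cZ}(1),\cO_{E\times\PP^2})\cong\cO_{\cZ}(1)\otimes\cN^{\vee}$, where $\cN^{\vee}$ is the pullback of the conormal bundle $\cO_{E}(-E)$ of the modifying divisor, the restricted sequence collapses to $\ses{\cO_{\cZ}(1)\otimes\cN^{\vee}}{\cE'|_{E\times\PP^2}}{K}$. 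Here the sub- and quotient sheaves of \eqref{lem:excep} have been interchanged, which is the characteristic effect of an elementary modification.

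The second step is to restrict this sequence to the fibre $\{m\}\times\PP^2$. Since $K$ and $\cO_{\cZ}(1)$ are flat families, the restriction remains exact; using the hypotheses $K|_{\{m\}\times\PP^2}\cong\cO_{\PP^2}(3)$ and $\cO_{\cZ}(1)|_{\{m\}\times\PP^2}\cong\cO_\ell(1)$ for the line $\ell$ attached to $m$, and observing that $\cN^{\vee}$ restricts to a one-dimensional vector space over the point $m$, I obtain the sheaf-level sequence $\ses{\cO_\ell(1)}{\cE'|_{\{m\}\times\PP^2}}{\cO_{\PP^2}(3)}$. To upgrade this to a sequence of pairs I would track the section: in \eqref{lem:excep} the section is carried by the quotient pair $(\cL'',\cO_{\cZ}(1))$ while the destabilizing sub-pair $(0,K)$ has the zero section; after the modification the former object becomes the sub-object, so the section $s'$ factors through $\cO_\ell(1)$ and the new quotient $\cO_{\PP^2}(3)$ inherits the zero section. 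This produces exactly \eqref{eqn:modification}.

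Finally, to see that \eqref{eqn:modification} is non-split, I would identify $\cE'|_{\{m\}\times\PP^2}$ with the distinguished object attached to the point $w(m)\in\rN$. Since $m\in E$ maps under $w$ to $E_\ell$, and by Remark \ref{rem:setmapn} together with the construction of $\rN$ the class $E_\ell$ is represented by the unique non-split extension of $\cO_{\PP^2}$ by $\cO_\ell(-2)$, twisting by $\cO_{\PP^2}(3)$ identifies $\cE'|_{\{m\}\times\PP^2}\cong E_\ell(3)$ with that non-split extension. Concretely one checks $\Ext^1(\cO_{\PP^2}(3),\cO_\ell(1))\cong\rH^1(\cO_\ell(-2))\cong\CC$ is one-dimensional and that a split extension would be decomposable, hence unstable and not a point of $\rN$, contradicting that the modified family lands in $\rQ$; this forces the extension class to be the nonzero generator.

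I expect the main obstacle to be the first step: correctly carrying out the elementary modification at the level of the relative $\mathcal{T}or$ sheaf and pinning down the conormal twist, while simultaneously keeping the bookkeeping of the section data so that the modification is genuinely an operation on \emph{pairs} rather than on sheaves alone. Verifying the flatness of the modified family $(\cL',\cE')$ (so that the restriction to the fibre over $m$ stays exact) is the technical point that makes the fibrewise identification legitimate, and it is what underwrites both the vanishing $\mathcal{T}or_{1}(\cE,\cO_{E\times\PP^2})=0$ and the clean form of the restricted sequence.
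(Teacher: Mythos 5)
Your first step---restricting the elementary modification to $E\times\PP^{2}$, computing the relevant $\mathcal{T}or$ to see that the sub- and quotient-pairs of \eqref{lem:excep} get interchanged (up to the conormal twist), and tracking the section so that $s'$ lands in $\cO_{\ell}(1)$---is sound and is essentially the paper's first half, made more explicit. The problem is your argument for non-splitness, which is circular. You deduce non-splitness from the assertion that $\cE'|_{\{m\}\times\PP^{2}}$ ``is'' the distinguished object $E_{\ell}(3)$ attached to $w(m)\in\rN$, equivalently from the claim that the modified family lands in $\rQ$. But the existence of $w:\rM^{+}\to\rN$ (Lemma \ref{lem:rigidmap}) is a purely scheme-theoretic statement obtained from the rigidity lemma; it carries no a priori identification of the fibers of $\cE'$ over $E$ with the fibers of the universal family on $\rN$. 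That identification is exactly what Lemma \ref{lem:modified} is needed to supply: it is the input to the proof of Proposition \ref{prop:mainprop}, where $\cE'\cong w'^{*}\cE_{0}$ and hence the lift $q:\rM^{+}\to\rQ$ are established. Indeed, Lemma \ref{lem:destab} shows that \emph{before} modification the fibers over $E$ do \emph{not} correspond to points of $\rN$, and the split extension $\cO_{\ell}(1)\oplus\cO_{\PP^{2}}(3)$ would equally fail to admit a resolution $\cO_{\PP^{2}}(1)^{2}\to\cO_{\PP^{2}}(2)^{3}$; so ``the modification landed in $\rQ$, hence the extension is non-split'' assumes precisely the point at issue. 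A semicontinuity argument from the fibers off $E$ does not rescue this, since $\hom(\cO_{\PP^{2}}(3),-)$ can only jump up on the special fiber.

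What is actually needed, and what the paper does, is a direct computation of the extension class of the restricted sequence. For an elementary modification along the divisor $E$, the class of \eqref{eqn:modification} at $m$ is controlled by the fiber of the normal bundle $\cN_{E/\rM^{+}}$ at $m$; the paper shows by deformation theory of pairs (the long exact sequences attached to $\ses{(0,\cO_{\PP^{2}}(-3))}{(s,I_{p}(1))}{(s,F)}$, together with Serre duality) that this fiber is \emph{canonically} isomorphic to $\rH^{0}(\cO_{\ell})^{*}\cong\Ext^{1}((0,\cO_{\PP^{2}}(3)),(s',\cO_{\ell}(1)))$, so the class is a canonical nonzero element. Your one-dimensionality computation $\Ext^{1}(\cO_{\PP^{2}}(3),\cO_{\ell}(1))\cong\rH^{1}(\cO_{\ell}(-2))\cong\CC$ is correct and is used in the paper, but by itself it only says the extension is either split or the unique non-split one; you still need the normal-bundle identification (or some other non-circular input) to rule out the split case.
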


\begin{proof}
Note that $\Ext^1((0,\cO_{\PP^{2}}(3),(s',\cO_{\ell}(1)))\cong \rH^1(\cO_{\ell}(-2))=\rH^0(\cO_{\ell})^*$ (see \cite[Corollary 1.6]{He98} for the idea of computation). Performing a modification of pairs affects as the interchange of the sub/quotient pairs. 

It remains to show that \eqref{eqn:modification} is non-split. We claim that the normal bundle $\cN_{E/\rM^{+}}$ at $m$ is canonically isomorphic to $\rH^0(\cO_{\ell})^*$. Then the element $m$ corresponds to the projective equivalent class of nonzero elements in $\rH^{0}(\cO_{\ell})^{*} \cong \Ext^1((0,\cO_{\PP^{2}}(3),(s',\cO_{\ell}(1)))$, it is non-split.

The stable pair $(s,F)=(s,I_{p,C}(1))$ fits into an exact sequence 
\[
\ses{(0,\cO_{\PP^{2}}(-3))}{(s,I_{p}(1))}{(s,F)}.
\]
Thus there is a long exact sequence 
\[
\begin{split}
0&\;\lr \Ext^0((0,\cO_{\PP^{2}}(-3)), (s,F))\lr \Ext^1((s,F),(s,F))\\
&\;\lr \Ext^1((s,I_p(1)),(s,F))\lr \Ext^1((0,\cO_{\PP^{2}}(-3)), (s,F))\lr 0.
\end{split}
\]
Then $\Ext^0((0,\cO_{\PP^{2}}(-3)), (s,F))\cong \rH^0(F(3))\cong\ker(\rH^0(\cO_C(4))\to\rH^0(\CC_p))\cong\CC^{13}$. This is the deformation space of quartic curves while fixing the point $p$. The second term $\Ext^{1}((s, F), (s,F))$ is the tangent space of $\rM^{+}$ at $m$ (\cite[Theorem 3.12]{He98}). For the third term, consider
\[
\begin{split}
0=\Ext^1((s,I_p(1)), &(0,\cO_{\PP^{2}}(-3)))\lr \Ext^1((s,I_p(1)), (s,I_p(1))) \lr \Ext^1((s,I_p(1)),(s,F))\\
&\lr \Ext^2((s,I_p(1)), (0,\cO_{\PP^{2}}(-3)))\lr \Ext^2((s,I_p(1)), (s,I_p(1)))=0.
\end{split}
\]
The second term $\Ext^1((s,I_p(1)), (s,I_p(1)))\cong\CC^3$ is the direct sum of the deformation space of $p$ and the deformation space of the choice of a section $s$. Hence the normal space to $E$ at $m = [(s,F)]$ is isomorphic to the space $\Ext^2((s,I_p(1)), (0,\cO_{\PP^{2}}(-3)))$. From \cite[Corollary 1.6]{He98} again, this is isomorphic to the kernel of 
\[
\Ext^2(I_p(1), \cO_{\PP^{2}}(-3))\twoheadrightarrow \rH^2(\cO_{\PP^{2}}(-3)).
\]
By taking the Serre duality, we prove the claim.
\end{proof}

By definition, the modified pair $(\cL',\cE')$ provides a natural surjection
\begin{equation}\label{eq-3}
\cExt_{\pi}^2(\cE',\omega_{\pi})\twoheadrightarrow \cL'
\end{equation}
on $\rM^+$. It is straightforward to check that $\cExt_{\pi}^2(\cE',\omega_{\pi})$ has rank 12 at each fiber, thus it is locally free.

\begin{proof}[Proof of Proposition \ref{prop:mainprop}]

We claim that there exists a surjection
\[
	w^*\cU^*\lr \cL' \lr 0
\]
up to a twisting with a line bundle on $\rM^+$. Then there is a morphism $\rM^+\lr \rQ$. Because it is an extension of a dominant rational map between irreducible projective varieties, it is surjective. 

Consider the following commutative diagram
\[
\xymatrix{\rM^+\times \PP^2 \ar[r]_{w':=w\times \mathrm{id}}\ar[d]_{\pi}& \rN\times \PP^2\ar[d]^{\pi}\\
\rM^+ \ar[r]^{w}&\rN.}
\]
Recall that $\cU = \pi_*(\cE_0)$ where $\cE_0=\mathrm{coker}(\phi)$ is the twisted universal quotient on $\rN\times \PP^2$. Also $\cE_0$ is flat over $\rN$ (Section \ref{sub:defq}). By its construction of $w$, $\cE'|_{\{m\}\times \PP^2}\cong {w'}^{*}\cE_0|_{\{m\}\times \PP^2}$ restricted to a point $m\in \rM^+$. Hence the universal property of $\rN$ (as a quiver representation space \cite[Proposition 5.6]{Kin94}) tells us that ${w'}^{*}\cE_0\cong \cE'$ up to a twisting of a line bundle on $\rM^+$. The base change property implies that there exists a natural isomorphism (up to a twisting by a line bundle)
\[
w^*\cU=w^*(\pi_*\cE_0)\cong \pi_*({w'}^*\cE_0)=\pi_* \cE'
\]
because $\cU$ is locally free (\cite[Proposition 4.37]{FGetal05}).
By \cite[Corollary 8.19]{LP93b}, $\pi_* \cE'\cong \cExt_\pi^2(\cE',\omega_{\pi})^*$. Hence by \eqref{eq-3}, we have
\[
w^*\cU^*\cong (w^*\cU)^*\cong (\pi_* (\cE'))^*\cong \cExt_{\pi}^2(\cE',\omega_{\pi}) \twoheadrightarrow \cL'.
\]
Therefore we obtain a morphism $q : \rM^{+} \to \rQ$. 

Note that both $\rM^{+}$ and $\rQ$ are smooth and $\rho(\rM^{+}) = \rho(\rQ) + 1$. So the map $q: \rM^{+} \to \rQ$ is a divisorial contraction. As explained in the proof of Lemma \ref{lem:rigidmap}, the exceptional divisor $E$ is a $\PP^{13}$-bundle of quartics containing $p$, over the full flag variety $\mathrm{Fl}(V) \subset \rQ$. Also, by the proof of Lemma \ref{lem:modified}, the modified pair does not depend on the choice of a quartic curve. Therefore there is a commutative diagram
\[
	\xymatrix{&E=\mathrm{Fl}(V) \times_{\PP V}\cC_{4}
	\ar[ld]_{t} \ar[rd]^{s}\\
	\PP V^{*}\times \PP V \supset \mathrm{Fl}(V) \ar[rd]^{u}&&
	\cC_{4} \subset \PP V \times |\cO_{\PP^{2}}(4)| \ar[ld]_{v}\\
	&\PP V.}
\]
The smoothness of the blowing-down map $q$ comes from Fujiki-Nakano criterion. From Equation \eqref{wall1}, the sheaf $F\in \cC_4$ is parameterized by $\PP^{13}=\PP\Ext^1(\cO_{\PP^{2}}(-3)[1], I_p(1))$-bundle over $\PP^2$. As analyzing the tangent space $T_F\rM=\Ext^1(F,F)$ by using Equation \eqref{wall1} (which is similar to the proof of \cite[Lemma 3.4]{CC12}), one can see that the normal space along the fiber $\PP^{13}$ is isomorphic to
\[
\cN_{\cC_4/\rM}|_{v^{-1}(\{p\})}\cong 
\Ext^1(I_{p}(1),\cO_{\PP^{2}}(-3)[1])\otimes \cO_{\PP^{13}}(-1).
\]
Hence the normal bundle $\cN_{E/\rM^{+}}$, when it is restricted to the fiber of $p \in \PP V$, is isomorphic to $ \cO_{\PP^{13}\times \PP^1}(-1,-1)$. Therefore the divisorial contraction $q$ is a smooth blow-down by \cite{FN71}.
\end{proof}

\begin{remark}
Contrary to the ordinary wall-crossing of moduli spaces of pairs, the modified flat family $\cE'$ is not a family of pure sheaves anymore. A similar phenomenon appears if we replace $\PP^2$ by the quadric surface $\PP^1\times \PP^1$. This case will be investigated in a forthcoming paper.
\end{remark}


\section{Chow ring of $\rN$}\label{sec:ChowofN}

The moduli space of $\rN = \rN(3;2,3)$ is a GIT quotient of a projective space (Section \ref{sub:defq}). Its topological properties have been studied in several papers, including \cite{ES89, Tjo98, Fra15}. In this section, we compute the Chow ring of $\rN$ by following the computation in \cite{ES89}.

The Poincar\'e polynomial of $\rN$ can be computed by several different methods, for instance Kirwan's recursive algorithm described in \cite{Kir84}.

\begin{proposition}[\protect{\cite[Theorems 4.4, 7.8]{ES89}}]\label{prop:ppolyN}
\begin{enumerate}
\item The Chow ring and the cohomology ring of $\rN$ are isomorphic. 
\item 
\[
	P_{t}(\rN) := \sum_{i\ge 0}\rank \rA_{i}(\rN)t^{i} = 
	1 + t + 3t^{2} + 3t^{3} + 3t^{4} + t^{5} + t^{6}.
\]
\end{enumerate}
\end{proposition}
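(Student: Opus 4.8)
The plan is to produce an algebraic cell decomposition of $\rN$ coming from a Bialynicki--Birula decomposition for a suitable torus action, and to extract both statements from it. The maximal torus $T \subset \mathrm{PGL}(V) = \mathrm{PGL}_3$ acts on $\PP^2 = \PP V$ with three isolated fixed points, and via the GIT description of Section \ref{sub:defq} this action lifts to the space of sheaf homomorphisms $\cO_{\PP^2}(-2)^{2} \to \cO_{\PP^2}(-1)^{3}$ compatibly with the $G$-action, hence descends to $\rN$. Together with the maximal torus of $G$ this gives an action of a larger torus on $\rN$; I would fix a \emph{generic} one-parameter subgroup $\lambda : \CC^* \to \rN$'s torus, so that its fixed locus is a finite set of points (genericity rules out positive-dimensional fixed loci). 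The Bialynicki--Birula theorem then decomposes the smooth projective variety $\rN$ into attracting cells $C_x \cong \AA^{n^+(x)}$ indexed by the $\lambda$-fixed points $x$, where $n^+(x)$ is the number of positive $\lambda$-weights on the tangent space $T_x\rN$, and these cells assemble into a filtration by closed subvarieties with affine-space successive complements.

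The technical core is the identification and weight analysis of the fixed points. The $\lambda$-fixed points of $\rN$ are the \emph{toric} Kronecker modules, i.e.\ stable homomorphisms whose matrix entries are supported on a single weight vector in each coordinate; equivalently, under the blow-down $t : \bH(3) \to \rN$ of Proposition \ref{prop:hilbcon}, they correspond to the $T$-fixed monomial ideals of colength $3$ on $\PP^2$, with the collinear fixed points appropriately accounted for along the contracted locus. I would list these fixed points explicitly and, at each stable representation $\rho$, compute the $\lambda$-weights on the tangent space $T_x\rN \cong \Ext^1(\rho,\rho)$, counting $n^+(x)$. Summing the resulting cell dimensions yields
\[
P_t(\rN) = \sum_x t^{n^+(x)},
\]
which I would then check equals $1 + t + 3t^2 + 3t^3 + 3t^4 + t^5 + t^6$; the number of fixed points must come out to $13$, matching the Euler characteristic obtained from $\chi(\bH(3)) = 22$ and the blow-down along $\PP V^* \cong \PP^2$.

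Both assertions of the proposition then follow formally. Since $\rN$ is smooth, projective, and admits a paving by affine spaces, the cycle class map $\rA^*(\rN) \to \rH^*(\rN)$ is an isomorphism and the odd cohomology vanishes (cf.\ \cite[Example 19.1.11]{Ful98}), giving part (1). Each cell $C_x$ furnishes a free generator of $\rA_{n^+(x)}(\rN)$, so the displayed sum is precisely $P_t(\rN)$, giving part (2); the palindromic symmetry of the answer is a built-in consistency check, reflecting Poincar\'e duality and the replacement of $\lambda$ by $\lambda^{-1}$.

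The main obstacle will be the bookkeeping in the middle step rather than any conceptual difficulty: correctly enumerating the toric Kronecker modules, handling those fixed points sitting over the blown-down collinear locus where $t$ fails to be an isomorphism, and computing the $\Ext^1$-weights accurately enough to pin down each $n^+(x)$. Once the weight counts are correct the cohomological consequences are immediate, so the genuine work is this finite but delicate representation-theoretic calculation, which is exactly the computation carried out in \cite{ES89}.
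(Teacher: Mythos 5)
Your approach is sound and would prove both statements, but it is genuinely different from what the paper does: the paper simply cites Ellingsrud--Str\o mme, whose method is the equivariant/GIT one --- present $\rN$ as $\Hom(F,V^{*}\otimes E)\git G$, compute $\rA^{*}$ and $\rH^{*}$ by passing through the quotient by the maximal torus of $G$ and taking Weyl invariants, with the Poincar\'e polynomial coming out of Kirwan's recursive stratification of the unstable locus; the cycle map is an isomorphism because both theories are computed by the same algebraic recipe. Your Bialynicki--Birula affine paving is a legitimate alternative (and is the standard route for $\bH(3)$ itself); it buys a geometrically transparent basis of $\rA_{*}(\rN)$ by cell closures, at the cost of a fixed-point/weight enumeration that you defer, whereas the ES89 route buys the ring presentation (which the paper actually needs in Section 4) for free.

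Two corrections and one shortcut. First, ``together with the maximal torus of $G$'' is wrong: $G$ acts trivially on the quotient $\rN$ by construction, so the only torus available is the two-dimensional maximal torus of $\mathrm{PGL}(V)$. This does not hurt you, because that torus already has finite fixed locus, but note that genericity of $\lambda$ only guarantees $\rN^{\lambda}=\rN^{T}$; the finiteness of $\rN^{T}$ is a separate check, which your identification supplies: $10$ non-collinear monomial ideals plus the $3$ coordinate points of $\PP V^{*}$, total $13$, consistent with $\chi(\rN)=22-4\cdot 3+3$. Second, you can avoid the delicate $\Ext^{1}$-weight computation entirely for part (2): once the affine paving gives part (1) and freeness of $\rA_{*}(\rN)$, apply the blow-up formula to $t:\bH(3)\to\rN$ (center $\PP V^{*}$ of codimension $4$, exceptional divisor a $\PP^{3}$-bundle) together with the known $P_{t}(\bH(3))=1+2t+5t^{2}+6t^{3}+5t^{4}+2t^{5}+t^{6}$, i.e.\ subtract $(1+t+t^{2})(t+t^{2}+t^{3})$, which yields exactly $1+t+3t^{2}+3t^{3}+3t^{4}+t^{5}+t^{6}$.
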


Let $E$ and $F$ be two vector spaces of dimension 3 and 2 respectively. The moduli space $\rN$ is constructed as the GIT quotient $\Hom(F, V^* \otimes E)\git G$ where $G = \GL(F) \times \GL(E)/\CC^{*}$. Let $T \cong (\CC^{*})^{4}$ be the diagonal maximal torus of $G$.

\begin{theorem}[\protect{\cite[Theorem 6.9, (i)]{ES89}}]
\[
\begin{split}
	\rA^{*}(\Hom(F, V^* \otimes E)\git T) \cong\; &
	\QQ[\beta_{1}, \beta_{2}, \beta_{3}, \delta_{1}, \delta_{2}]\\
	&/\langle
	\beta_{1}+\beta_{2}+\beta_{3}-\delta_{1}-\delta_{2},
	(\beta_{i}-\delta_{k})^{3}(\beta_{j}-\delta_{k})^{3},
	(\beta_{i}-\delta_{1})^{3}(\beta_{i}-\delta_{2})^{3}\rangle
\end{split}
\]
for $1 \le i < j \le 3$ and $k = 1, 2$. All $\beta_{i}$, $\delta_{k}$ are of degree 1.
\end{theorem}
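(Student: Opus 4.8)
The plan is to compute this torus quotient by the standard recipe for a linear torus action on a vector space: realize its Chow ring as the $T$-equivariant Chow ring of the semistable locus, and then cut the polynomial ring $\rA^*(BT)$ down by the classes supported on the unstable locus. Write $W := \Hom(F,V^*\otimes E) = F^*\otimes V^*\otimes E$, a representation of $\widetilde{T} = (\CC^*)^2\times(\CC^*)^3$, the maximal torus of $\GL(F)\times\GL(E)$, whose quotient by the central $\CC^*$ acting trivially on $W$ is $T$. First I would fix a generic linearization so that $W^{ss}=W^{s}$ and $T$ acts on $W^{s}$ with finite stabilizers; then $W\git T = W^{s}/T$ is smooth projective of dimension $18-4=14$, and with $\QQ$-coefficients the natural map is an isomorphism $\rA^*(W\git T)\cong \rA^*_T(W^{s})$ (Kirwan surjectivity becomes an isomorphism because the action is free modulo finite groups).

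Since $W$ is an affine space it is $T$-equivariantly contractible, so $\rA^*_T(W)\cong \rA^*(BT)$. Decomposing $F=\bigoplus_b L_{\delta_b}$ and $E=\bigoplus_a L_{\beta_a}$ into $T$-weight lines identifies $\beta_1,\beta_2,\beta_3$ and $\delta_1,\delta_2$ with the Chern roots of the universal bundles $\cE$ and $\cF$; the single linear relation $\beta_1+\beta_2+\beta_3-\delta_1-\delta_2$ then presents $\rA^*(BT)$ as a quotient of $\rA^*(B\widetilde{T})=\QQ[\beta_1,\beta_2,\beta_3,\delta_1,\delta_2]$, reflecting the central $\CC^*$ that acts trivially on $W$ (its class $\sum\beta_a-\sum\delta_b$ restricts nontrivially to $\CC^*$ and is set to zero, equivalently $c_1(\cE)=c_1(\cF)$). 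The point to record is the $T$-weight decomposition $W=\bigoplus_{a,b}(L_{\beta_a-\delta_b})^{\oplus 3}$: there are six distinct weights $\beta_a-\delta_b$, each of multiplicity $\dim V^*=3$.

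Next I would invoke the excision sequence $\rA^*_T(W^{us})\to \rA^*_T(W)\to \rA^*_T(W^{s})\to 0$, giving $\rA^*_T(W^{s})=\rA^*(BT)/K$ where $K$ is the image of the left-hand map, namely the ideal generated by the $T$-equivariant fundamental classes of the irreducible components of the unstable locus $W^{us}=W\setminus W^{s}$. Because the action is linear and diagonal, each such component is a coordinate subspace $L_S$ (maps supported on a subset $S$ of the six blocks), with equivariant class $\prod_{(a,b)\notin S}(\beta_a-\delta_b)^3$, the exponent being the multiplicity $3$. The heart of the argument is the combinatorial identification of the maximal unstable coordinate subspaces through the Hilbert--Mumford/King numerical criterion for the dimension vector $(2,3)$: a coordinate subspace lies in $W^{us}$ exactly when a single one-parameter subgroup destabilizes all of it. This produces precisely nine maximal unstable subspaces: six where the image of a coordinate line of $F$ lands in a single coordinate line of $E$, giving the complementary normal weights $(\beta_i-\delta_k)^3(\beta_j-\delta_k)^3$ for $i<j$, $k=1,2$; and three where the whole image avoids a coordinate line of $E$, giving $(\beta_i-\delta_1)^3(\beta_i-\delta_2)^3$. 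Their normal Euler classes are exactly the nine listed relations.

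The main obstacle I expect is the last step, in which I must show that $K$ is generated by these nine classes and no more. One has to check that the deeper strata (where a larger subset is excluded, e.g. $\phi(F)$ contained in a single $E$-line, or a coordinate line of $F$ killed entirely) are intersections of the nine maximal ones, so their equivariant classes are multiples of the listed generators and contribute nothing new; equivalently, that the image of $\rA^*_T(W^{us})$ is generated by the classes of the irreducible components, which follows because each $L_S$ is smooth and a deeper stratum is cut out inside some $L_S$ by further vanishing and hence pushes forward to a multiple of $e_T(N_{L_S})$. Granting this, $K=\langle \beta_1+\beta_2+\beta_3-\delta_1-\delta_2,\ (\beta_i-\delta_k)^3(\beta_j-\delta_k)^3,\ (\beta_i-\delta_1)^3(\beta_i-\delta_2)^3\rangle$, and combining with the isomorphisms of the first paragraph yields the stated presentation.
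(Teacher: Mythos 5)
Your proposal is correct and follows essentially the same route as the source the paper cites for this statement (the paper itself gives no proof beyond the reference to [ES89, Theorem 6.9]): present $\rA^*(BT)$ by the single linear relation coming from the trivially-acting central $\CC^*$, and cut out the ideal generated by the equivariant Euler classes of the normal bundles of the maximal unstable coordinate subspaces, which King's criterion for the dimension vector $(2,3)$ identifies as the nine listed ones with classes $(\beta_i-\delta_k)^3(\beta_j-\delta_k)^3$ and $(\beta_i-\delta_1)^3(\beta_i-\delta_2)^3$. The only difference is cosmetic: you phrase the localization-exact-sequence step in the modern equivariant Chow language of Edidin--Graham, whereas Ellingsrud--Str\o mme, writing earlier, use finite-dimensional approximations of the classifying space; your worry about deeper strata is also handled more simply by noting that $\rA_*^T$ of a finite union of closed invariant linear subspaces is generated by the (fundamental classes of the) individual components.
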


The Chow ring of $\rN = \Hom(F, V^* \otimes E)\git G$ can be described in terms of the Weyl group $W$ action on $\Hom(F, V^* \otimes E)\git T$. Note that $W \cong S_{3} \times S_{2}$ and $W$ acts on $\{\beta_{i}, \delta_{k}\}$ as the permutation group.

Let $b_{i}$ (resp. $d_{k}$) be the elementary symmetric polynomials in $\beta_{i}$ (resp. $\delta_{k}$). There is a map
\[
	\varphi : \QQ[\beta_{1}, \beta_{2}, \beta_{3}, \delta_{1}, \delta_{2}]
	\to \QQ[b_{1}, b_{2}, b_{3}, d_{1}, d_{2}]
\]
defined by $\varphi(r) = \Delta^{-1}\sum_{w \in W}\mathrm{sign}(w)w(r)$, where
\[
	\Delta = (\beta_{1}-\beta_{2})(\beta_{1}-\beta_{3})
	(\beta_{2}-\beta_{3})(\delta_{1}-\delta_{2}).
\]
Let $I = \langle \beta_{1}+\beta_{2}+\beta_{3}-\delta_{1}-\delta_{2}, (\beta_{i}-\delta_{k})^{3}(\beta_{j}-\delta_{k})^{3}, (\beta_{i}-\delta_{1})^{3}(\beta_{i}-\delta_{2})^{3}\rangle$ be the ideal of relations.

\begin{theorem}[\protect{\cite[Theorem 6.9, (ii)]{ES89}}]\label{thm:cohomologyofNgeneral}
\[
	\rA^{*}(\rN) = \rA^{*}(\Hom(F, V^* \otimes E)\git G) \cong
	\QQ[b_{1}, b_{2}, b_{3}, d_{1}, d_{2}]/\varphi(I).
\]
\end{theorem}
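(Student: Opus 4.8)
The plan is to deduce the $G$-quotient presentation from the $T$-quotient presentation $\rA^*(\Hom(F,V^*\otimes E)\git T) \cong \QQ[\beta_1,\beta_2,\beta_3,\delta_1,\delta_2]/I$ of the previous theorem, using the Weyl group $W = S_3\times S_2$ to pass from $T$ to $G$. Write $X = \Hom(F,V^*\otimes E)$ and let $X^s$ be the $G$-stable locus, so $\rN = X^s/G$. Because $X$ is a linear representation, Kirwan surjectivity (\cite{Kir84}) identifies $\rA^*(\rN)$ with a quotient of the equivariant ring $\rA^*_G(X)=\rA^*(BG)=\QQ[b_1,b_2,b_3,d_1,d_2]$, the $b_i$ and $d_k$ being the Chern classes of the two tautological bundles, i.e.\ the elementary symmetric functions in the $\beta_i$ and $\delta_k$. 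Thus everything reduces to identifying the kernel of this surjection, and the assertion is that it equals $\varphi(I)$. The role of the operator $\varphi$ is that it is the Gysin pushforward for the fibration relating the two quotients.

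Concretely, over $\rN = X^s/G$ the quotient $X^s/T$ is an affine bundle over a full flag bundle with fibre $G/B$ (the affine directions coming from $B/T$), whose relative dimension is the number $4$ of positive roots, which is exactly $\deg\Delta$. I would record two structural maps on Chow rings. First, the pullback $\psi^*$ along $\psi\colon X^s/T\to\rN$, which on Borel presentations is the inclusion of symmetric functions $\QQ[b,d]\hookrightarrow\QQ[\beta,\delta]$ and identifies $\rA^*(\rN)$ with the $W$-invariants $\rA^*(X^s/T)^W$. Second, the Gysin pushforward $\psi_*$ (taken along the proper $G/B$-directions, the affine directions inducing isomorphisms on Chow groups), which for a full flag bundle is the longest divided-difference operator $\partial_{w_0}\colon r\mapsto \Delta^{-1}\sum_{w\in W}\mathrm{sign}(w)\,w(r)$ — that is, exactly $\varphi$. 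On the level of equivariant Chow rings this is the Gysin map $\rA^*_T(\mathrm{pt})\to\rA^*_G(\mathrm{pt})$, so $\varphi$ makes the square relating the two Kirwan surjections commute. Note in particular $\psi_*\psi^*=0$, so $\varphi$ annihilates $W$-invariants, while $\varphi(\Delta\cdot g)=|W|\,g$ for symmetric $g$, whence $\psi_*=\varphi$ is surjective.

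With these maps the inclusion $\varphi(I)\subseteq\ker$ is formal: each generator of $I$ vanishes in $\rA^*(X\git T)$, hence restricts to $0$ on the smaller locus $X^s\subseteq X^{ss}_T$, so its pushforward $\varphi(\cdot)$ vanishes in $\rA^*(\rN)$; by the $\QQ[b,d]$-linearity of $\varphi$ and the projection formula the whole ideal $\varphi(I)$ lies in the kernel. This already realizes $\rA^*(\rN)$ as a quotient of $\QQ[b,d]/\varphi(I)$. To promote this to an isomorphism I would compute the relations explicitly, applying $\varphi$ to the products of the degree-six generators $(\beta_i-\delta_k)^3(\beta_j-\delta_k)^3$ and $(\beta_i-\delta_1)^3(\beta_i-\delta_2)^3$ of $I$ with a $\QQ[b,d]$-basis of the coinvariant algebra, and then verify that the resulting graded ring $\QQ[b,d]/\varphi(I)$ has Hilbert series equal to the Poincar\'e polynomial $1+t+3t^2+3t^3+3t^4+t^5+t^6$ of Proposition \ref{prop:ppolyN}. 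Agreement of dimensions in every degree forces the surjection $\QQ[b,d]/\varphi(I)\twoheadrightarrow\rA^*(\rN)$ to be an isomorphism; since the symmetrizations are unwieldy this last check is best delegated to a computer.

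The genuine obstacle is precisely the reverse inclusion $\ker\subseteq\varphi(I)$, i.e.\ ruling out further relations. The difficulty is that $\rA^*(\rN)$ is \emph{not} merely the ring of $W$-invariants of $\rA^*(X\git T)$: the $T$-semistable and $G$-stable loci differ, and the discrepancy is governed by the annihilator of the product of the roots in the general torus-reduction formalism. Analyzing that correction directly—via the Schubert-basis description of $\ker\psi_*$ and the classes supported on $X^{ss}_T\setminus X^s$—is delicate. The virtue of the pushforward description is that this correction is automatically absorbed into $\varphi$, so it need never be computed by hand; the cost is that the equality $\ker=\varphi(I)$ is then only accessible through the numerical comparison with the known Betti numbers, which is why Proposition \ref{prop:ppolyN} is indispensable to the argument (\cite{ES89}).
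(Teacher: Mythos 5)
The paper does not actually prove this statement: it is imported verbatim from Ellingsrud--Str\o mme as \cite[Theorem 6.9 (ii)]{ES89}, so there is no in-paper argument to compare yours against. Judged on its own terms, your proposal is a sound reconstruction, and it is close in spirit to the source: the identification of $\varphi=\Delta^{-1}\sum_w \mathrm{sign}(w)\,w$ with the Gysin pushforward $\partial_{w_0}$ along the $G/B$-directions of $X^s/T\to X^s/G$ is exactly the mechanism behind the Ellingsrud--Str\o mme formalism, and the inclusion $\varphi(I)\subseteq\ker$ via restriction from $X^{ss}_T$ to $X^s_G\subseteq X^{ss}_T$ followed by pushforward is correct. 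Where you diverge is the reverse inclusion: \cite{ES89} pin down the kernel structurally (their general theorem controls the classes supported on $X^{ss}_T\setminus X^{ss}_G$), whereas you close the gap by matching the Hilbert series of $\QQ[b,d]/\varphi(I)$ against the independently known Poincar\'e polynomial of Proposition \ref{prop:ppolyN}. That substitution is legitimate --- a surjection of graded $\QQ$-vector spaces with equal dimensions in each degree is an isomorphism --- and it is precisely the style of argument this paper uses itself in Proposition \ref{cohnet}; the price is a genuine external input (the Betti numbers, via \cite[Theorems 4.4, 7.8]{ES89} or Kirwan's algorithm, which are independent of Theorem 6.9) plus a machine computation, both of which you correctly flag.

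Two small imprecisions worth tightening. First, Kirwan surjectivity as cited is a statement in equivariant cohomology; for Chow rings the surjection $\rA^*_G(X)\twoheadrightarrow \rA^*_G(X^s)$ is actually easier, coming from the localization exact sequence for the open inclusion $X^s\subset X$ together with homotopy invariance ($X$ is a representation), and $\rA^*_G(X^s)=\rA^*(\rN)$ because the coprimality of $(2,3)$ makes the $G$-action on $X^s$ free. Second, since $G=\GL(F)\times\GL(E)/\CC^*$, the ring $\rA^*(BG)$ is not literally the free polynomial ring on the five classes $b_i,d_k$; this discrepancy is exactly accounted for by the relation $b_1-d_1=\varphi(\Delta(\beta_1+\beta_2+\beta_3-\delta_1-\delta_2))\in\varphi(I)$, so it does no harm, but it should be said. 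Neither point affects the validity of the argument.
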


To compute this ring explicitly, we need to compute $\varphi(I)$. Note that $\varphi$ is a map of degree $(-4)$ and for every anti-invariant $\Delta r $, $\varphi(\Delta r) = |\cW|r = 12r$. More precisely, $\varphi$ defines an isomorphism
\[
	\varphi : \QQ[\beta_{1}, \beta_{2}, \beta_{3},
	\gamma_{1}, \gamma_{2}]^{a} \to
	\QQ[b_{1}, b_{2}, b_{3}, d_{1}, d_{2}]
\]
of $\QQ[b_{1}, b_{2}, b_{3}, d_{1}, d_{2}]$-modules, and the domain is generated by $\Delta$. Thus to find a set of generators of $\varphi(I)$, it suffices to find a set of generators in $I^{a} := I \cap \QQ[\beta_{1}, \beta_{2}, \beta_{3}, \delta_{1}, \delta_{2}]^{a}$. The following elements are elements in $I^{a}$. For notational simplicity, we set $\beta_{i} := \beta_{i \!\!\mod 3}$ for $i > 3$.
\[
	\Delta (\beta_{1}+\beta_{2}+\beta_{3}-\delta_{1}-\delta_{2}),
\]
\[
	\sum_{i=1}^{3}(\beta_{i}-\beta_{i+1})(\beta_{i}-\delta_{1})^{3}
	(\beta_{i+1}-\delta_{1})^{3}
	- \sum_{i=1}^{3}(\beta_{i}-\beta_{i+1})(\beta_{i}-\delta_{2})^{3}
	(\beta_{i+1}-\delta_{2})^{3},
\]
\[
	\sum_{i=1}^{3}\delta_{1}(\beta_{i}-\beta_{i+1})
	(\beta_{i}-\delta_{1})^{3}(\beta_{i+1}-\delta_{1})^{3}
	- \sum_{i=1}^{3}\delta_{2}(\beta_{i}-\beta_{i+1})
	(\beta_{i}-\delta_{2})^{3}(\beta_{i+1}-\delta_{2})^{3},
\]
\[
	\sum_{i=1}^{3}(\beta_{i}^{2}-\beta_{i+1}^{2})
	(\beta_{i}-\delta_{1})^{3}(\beta_{i+1}-\delta_{1})^{3}
	- \sum_{i=1}^{3}(\beta_{i}^{2}-\beta_{i+1}^{2})
	(\beta_{i}-\delta_{2})^{3}(\beta_{i+1}-\delta_{2})^{3},
\]
\[
	\sum_{i=1}^{3}\delta_{1}(\beta_{i}^{2}-\beta_{i+1}^{2})
	(\beta_{i}-\delta_{1})^{3}(\beta_{i+1}-\delta_{1})^{3}
	- \sum_{i=1}^{3}\delta_{2}(\beta_{i}^{2}-\beta_{i+1}^{2})
	(\beta_{i}-\delta_{2})^{3}(\beta_{i+1}-\delta_{2})^{3},
\]
\[
	\sum_{i=1}^{3}(\beta_{i}-\beta_{i+1})(\beta_{i+2}-\delta_{1})^{3}
	(\beta_{i+2}-\delta_{2})^{3}(\delta_{1}-\delta_{2}),
\]
\[
	(\beta_{1}-\beta_{2})(\beta_{2}-\beta_{3})(\beta_{3}-\beta_{1})
	\left(\sum_{i=1}^{3}
	(\beta_{i}-\delta_{1})^{3}(\beta_{i+1}-\delta_{1})^{3} - 
	\sum_{i=1}^{3}
	(\beta_{i}-\delta_{2})^{3}(\beta_{i+1}-\delta_{2})^{3}\right).
\]

For each element $\alpha \in \QQ[\beta_{1}, \beta_{2}, \beta_{3}, \delta_{1}, \delta_{2}]$, by using a computer algebra system, we can compute $\varphi(\alpha)/|W|$. Note that the image of the first element is $b_{1}-d_{1}$. Thus we can identify $b_{1}$ and $d_{1}$. After this identification, we obtain the following elements:
\[
	r_{1} := -b_{1}^{3} + 4b_{1}d_{2} - 3b_{3},
	r_{2} := -b_{1}^{4} + 5b_{1}^{2}d_{2} - 2b_{1}b_{3} - b_{2}^{2}
	+ 3b_{2}d_{2} - 6d_{2}^{2},
\]
\[
	r_{3} := -b_{1}^{2}b_{2} + b_{2}d_{2} + 3d_{2}^{2},
	r_{4} := -b_{1}^{3}b_{2} +b_{1}^{2}b_{3} + 5b_{1}b_{2}d_{2}
	- b_{1}b_{2}^{2} + b_{2}b_{3} - 6b_{3}d_{2},
\]
\[
	r_{5}:= b_{1}b_{3} - b_{2}^{2} + 3b_{2}d_{2} - 3d_{2}^{2},
	r_{6}:= b_{1}^{5} - 3b_{1}^{3}b_{2} - 4b_{1}^{3}d_{2}
	+ 6b_{1}^{2}b_{3} + 9b_{1}b_{2}d_{2} + 3b_{1}d_{2}^{2}
	+ 3b_{2}b_{3} - 21b_{3}d_{2}.
\]

\begin{proposition}\label{cohnet}
\[
	\rA^{*}(\rN) \cong \QQ[b_{1}, b_{2}, d_{2}]/
	\langle b_1^2 d_2-3 d_2^2,b_1^2 b_2-b_2 d_2-3 d_2^2,
	b_1^4+3b_2^2-9b_2 d_2-3d_2^2,
	2b_1 b_2 d_2-3 b_1 d_2^2,
	 3b_1 b_2^2-7b_1 d_2^2\rangle
\]
\end{proposition}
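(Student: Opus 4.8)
The plan is to start from the presentation $\rA^{*}(\rN) \cong \QQ[b_{1}, b_{2}, b_{3}, d_{1}, d_{2}]/\varphi(I)$ of Theorem \ref{thm:cohomologyofNgeneral}, using the explicit generators of $\varphi(I)$ computed above, and to eliminate variables one at a time until only $b_{1}, b_{2}, d_{2}$ remain. By the preceding discussion, $\varphi(I)$ is generated by the images of the seven listed anti-invariants, namely $b_{1}-d_{1}$ together with $r_{1}, \dots, r_{6}$. First I would use $b_{1}-d_{1}$ to identify $d_{1}$ with $b_{1}$, exactly as already done in the text; this reduces the ambient ring to $\QQ[b_{1}, b_{2}, b_{3}, d_{2}]$ and leaves the generators $r_{1}, \dots, r_{6}$. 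Next, since $r_{1} = -b_{1}^{3} + 4b_{1}d_{2} - 3b_{3}$ is linear in $b_{3}$ with a unit leading coefficient, I would solve it for $b_{3} = \tfrac{1}{3}b_{1}(4d_{2} - b_{1}^{2})$. Eliminating $b_{3}$ gives a ring isomorphism $\QQ[b_{1}, b_{2}, b_{3}, d_{2}]/\langle r_{1}\rangle \cong \QQ[b_{1}, b_{2}, d_{2}]$, under which the remaining generators become polynomials $\bar r_{2}, \dots, \bar r_{6}$ in $b_{1}, b_{2}, d_{2}$. The claim then reduces to the purely algebraic assertion that $\langle \bar r_{2}, \dots, \bar r_{6}\rangle$ coincides with the target ideal $\langle g_{1}, \dots, g_{5}\rangle$, where $g_{1}, \dots, g_{5}$ are the five displayed relations.

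The core of the argument is to exhibit this ideal equality by explicit linear-combination identities, which I expect to be the bookkeeping-heavy (though conceptually routine) part. On one hand $\bar r_{3} = -g_{2}$ outright, and $\bar r_{2} - \bar r_{5} = g_{1}$ after substituting the formula for $b_{3}$; combining these, $-3\bar r_{5} + 4(\bar r_{2} - \bar r_{5}) = g_{3}$, so that $g_{1}, g_{2}, g_{3}$ all lie in $\langle \bar r_{2}, \dots, \bar r_{6}\rangle$. Reducing $\bar r_{4}$ and $\bar r_{6}$ modulo the substitutions $b_{1}^{2}d_{2} = 3d_{2}^{2}$, $b_{1}^{2}b_{2} = b_{2}d_{2} + 3d_{2}^{2}$, and $b_{1}^{4} = -3b_{2}^{2} + 9b_{2}d_{2} + 3d_{2}^{2}$ coming from $g_{1}, g_{2}, g_{3}$ then yields $\bar r_{4} \equiv g_{4}$ and $\bar r_{6} \equiv g_{5}$, giving $g_{4}, g_{5}$ as well. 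The reverse inclusions follow by reading the same identities backwards: $\bar r_{5}$ and hence $\bar r_{2}$ are combinations of $g_{1}, g_{3}$, while $\bar r_{4}$ and $\bar r_{6}$ differ from $g_{4}$ and $g_{5}$ by combinations of $g_{1}, g_{2}, g_{3}$, so every $\bar r_{i}$ lies in the target ideal.

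The main obstacle is not conceptual but the error-prone verification that the reductions of $\bar r_{4}$ and $\bar r_{6}$ collapse \emph{exactly} to $g_{4}$ and $g_{5}$, with no further independent relation hidden among the $\bar r_{i}$. This is precisely the step for which a computer algebra system is indispensable, and I would confirm the ideal equality $\langle \bar r_{2}, \dots, \bar r_{6}\rangle = \langle g_{1}, \dots, g_{5}\rangle$ by a Gr\"obner basis computation in Macaulay2 rather than entirely by hand, cross-checking the by-hand identities of the previous paragraph against it.

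Finally, I would flag the one genuinely nontrivial input carried over from the preceding discussion: that the seven listed anti-invariants actually generate $I^{a}$ as a module over $\QQ[b_{1}, b_{2}, b_{3}, d_{1}, d_{2}]$, so that their $\varphi$-images generate all of $\varphi(I)$. This is what licenses starting the reduction from the finite set $\{b_{1}-d_{1}, r_{1}, \dots, r_{6}\}$, and it rests on the isomorphism $\varphi : \QQ[\beta_{1}, \beta_{2}, \beta_{3}, \delta_{1}, \delta_{2}]^{a} \to \QQ[b_{1}, b_{2}, b_{3}, d_{1}, d_{2}]$ of free rank-one modules recorded above.
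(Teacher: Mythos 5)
Your overall route (identify $d_{1}$ with $b_{1}$, eliminate $b_{3}$ via $r_{1}$, then match the resulting ideal against the five displayed relations) is the same as the paper's, and your explicit identities $\bar r_{3}=-g_{2}$, $\bar r_{2}-\bar r_{5}=g_{1}$, $-3\bar r_{5}+4g_{1}=g_{3}$ all check out. But there is a genuine gap at the step you yourself flag: you assert that the seven listed anti-invariants generate $I^{a}$, and claim this ``rests on'' the isomorphism $\varphi:\QQ[\beta_{1},\beta_{2},\beta_{3},\delta_{1},\delta_{2}]^{a}\to\QQ[b_{1},b_{2},b_{3},d_{1},d_{2}]$ of free rank-one modules. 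That isomorphism only tells you that generators of $I^{a}$ map to generators of $\varphi(I)$; it gives no reason why these \emph{particular} seven elements generate $I^{a}$. The paper never claims they do --- it only records that they \emph{lie} in $I^{a}$, so that the ideal $J$ they generate satisfies $J\subseteq\varphi(I)$. Without closing this, your argument (including the proposed Gr\"obner computation, which only compares $\langle\bar r_{2},\dots,\bar r_{6}\rangle$ with $\langle g_{1},\dots,g_{5}\rangle$) shows only that $\rA^{*}(\rN)$ is a \emph{quotient} of the displayed ring, i.e.\ the presentation might be missing relations.

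The paper closes the gap differently: it computes the Hilbert series of $\QQ[b_{1},b_{2},b_{3},d_{2}]/J$ and compares it with the Poincar\'e polynomial $P_{t}(\rN)=1+t+3t^{2}+3t^{3}+3t^{4}+t^{5}+t^{6}$, which is known independently (Proposition \ref{prop:ppolyN}, via Kirwan's algorithm or \cite{ES89}). Since $J\subseteq\varphi(I)$ and the two graded quotients have the same finite dimension in every degree, $J=\varphi(I)$. You should add this (or an equivalent a posteriori check) to your argument; it is the load-bearing step, not a formality.
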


\begin{proof}
Consider the ideal $J$ of $\QQ[b_{1}, b_{2}, b_{3}, d_{2}]$ generated by $r_{i}$ for $1 \le i \le 6$. It is a subideal of $\varphi(I)$. By computing Hilbert polynomial, one can check that $J = p(I)$. Therefore $\rA^{*}(\rN) \cong \QQ[b_{1},b_{2},b_{3},d_{2}]/J$. The presentation on the statement is obtained by eliminating $b_{3}$, by using the first relation $-b_{1}^{3}+4b_{1}d_{2}+3b_{3} = 0$. 
\end{proof}

\begin{remark}
Let $\cF$ and $\cE$ be two universal bundles on $\rN$ and $\varphi : \cF \to V^* \otimes \cE$ be the universal morphism. Then $b_{1}, b_{2}, b_{3}$ are the Chern classes of $\cE$ and $d_{1}, d_{2}$ are Chern classes of $\cF$ (\cite[(6.11)]{ES89}). 
\end{remark}


\section{Chow ring of $\rM^{+}$}\label{sec:ChowMp}

By using the blow-up description of the projective morphism $\rM^{+} \to \rQ \to \rN$ in \eqref{eqn:diagramSec3}, we are able to compute the Chow ring of $\rM^{+}$. 

\subsection{Chow ring of $\rQ$}\label{cohQ}

Recall that $\rQ$ is the projective bundle $\PP \cU$ over $\rN$ of a rank 12 locally free sheaf $\cU$, which is the cokernel of the injective morphism 
\[
	\cF \otimes \bH^{0}(\cO_{\PP^{2}}(1)) \to 
	\cE \otimes \bH^{0}(\cO_{\PP^{2}}(2)), 
\]
where $\cF$ and $\cE$ are two universal sheaves on $\rN$. 
Let $\rho := c_{1}(\cO_{\rQ}(1))$. 

\begin{proposition}
\[
\begin{split}
	\rA^{*}(\rQ) \cong\;& 
	\QQ[\rho, b_{1}, b_{2}, d_{2}]/
	\langle b_{1}^{2}d_{2} - 3d_{2}^{2}, 
	b_{1}^{2}b_{2} - b_{2}d_{2} - 3d_{2}^{2},
	b_{1}^{4}+3b_{2}^{2} - 9b_{2} d_{2} - 3d_{2}^{2},
	2b_{1}b_{2}d_{2} - 3b_{1}d_{2}^{2},\\
	& 3b_{1}b_{2}^{2} - 7b_{1}d_{1}^{2}, 
	\rho^{12} + 3\rho^{11}b_{1} + 
	3\rho^{10}(b_{1}^{2} + 2b_{2} - d_{2}) +
	\rho^{9}(-b_{1}^{3} + 12b_{1}b_{2} + 2b_{1}d_{2}) + \\
	& + 3\rho^{8}(9b_{2}^{2} - 16b_{2}d_{2} + 17d_{2}^{2}) + 
	28\rho^{7}b_{1}d_{2}^{2}+56\rho^{6}d_{2}^{3}\rangle
\end{split}
\]
\end{proposition}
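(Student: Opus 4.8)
The plan is to apply the projective bundle formula (\cite[Example 8.3.4]{Ful98}) to the rank $12$ bundle $\cU$ on $\rN$, and then to rewrite the resulting single relation using the presentation of $\rA^{*}(\rN)$ from Proposition \ref{cohnet}. Recall that in our convention $\rQ = \PP(\cU) = \proj(\mathrm{Sym}^{\bullet}\cU^{*})$ carries a tautological line subbundle $\cO_{\rQ}(-1) \hookrightarrow \pi^{*}\cU$, where $\pi : \rQ \to \rN$ is the projection and $\rho = c_{1}(\cO_{\rQ}(1))$. Writing $\cQ$ for the rank $11$ universal quotient $\pi^{*}\cU/\cO_{\rQ}(-1)$, the Whitney formula gives $c(\cQ) = c(\pi^{*}\cU)(1-\rho)^{-1} = c(\pi^{*}\cU)(1 + \rho + \rho^{2} + \cdots)$, and the vanishing $c_{12}(\cQ) = 0$ yields the single relation
\[
	\sum_{i=0}^{12} c_{i}(\cU)\,\rho^{12-i} = 0.
\]
By \cite[Example 8.3.4]{Ful98}, $\rA^{*}(\rQ)$ is a free $\rA^{*}(\rN)$-module with basis $1, \rho, \ldots, \rho^{11}$, so
\[
	\rA^{*}(\rQ) \cong \rA^{*}(\rN)[\rho]\Big/\Big\langle
	\textstyle\sum_{i=0}^{12} c_{i}(\cU)\,\rho^{12-i}\Big\rangle.
\]
Since $\dim \rN = 6$, the Chern classes $c_{i}(\cU)$ vanish for $i > 6$, so only the terms $\rho^{12}, \ldots, \rho^{6}$ survive.

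Next I would compute the total Chern class of $\cU$. The exact sequence \eqref{eqn:exactseqforQ} realizes $\cU$ as the quotient of $\cE \otimes \rH^{0}(\cO_{\PP^{2}}(2))$ by $\cF \otimes \rH^{0}(\cO_{\PP^{2}}(1))$; since $\dim \rH^{0}(\cO_{\PP^{2}}(1)) = 3$ and $\dim \rH^{0}(\cO_{\PP^{2}}(2)) = 6$, the Whitney formula gives
\[
	c(\cU) = c(\cE)^{6}\,c(\cF)^{-3}.
\]
Here $c(\cE) = 1 + b_{1} + b_{2} + b_{3}$ and $c(\cF) = 1 + b_{1} + d_{2}$ (recall $d_{1} = b_{1}$ after the identification in Proposition \ref{cohnet}), and $b_{3}$ is eliminated through the relation $b_{3} = \tfrac{1}{3}(b_{1}^{3} - 4b_{1}d_{2})$. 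A direct expansion in low degrees already reproduces the leading coefficients: $c_{1}(\cU) = 6b_{1} - 3b_{1} = 3b_{1}$ and $c_{2}(\cU) = 3b_{1}^{2} + 6b_{2} - 3d_{2} = 3(b_{1}^{2} + 2b_{2} - d_{2})$, matching the claimed coefficients of $\rho^{11}$ and $\rho^{10}$.

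The remaining work is to expand $c_{3}(\cU), \ldots, c_{6}(\cU)$ and reduce them modulo the ideal of relations of $\rA^{*}(\rN)$ from Proposition \ref{cohnet} (for instance $b_{1}^{2}d_{2} = 3d_{2}^{2}$ and $3b_{1}b_{2}^{2} = 7b_{1}d_{2}^{2}$), after substituting for $b_{3}$. This is the main obstacle: while each step is routine, the simplification to the compact forms $c_{3}(\cU) = -b_{1}^{3} + 12b_{1}b_{2} + 2b_{1}d_{2}$, $c_{4}(\cU) = 3(9b_{2}^{2} - 16b_{2}d_{2} + 17d_{2}^{2})$, $c_{5}(\cU) = 28b_{1}d_{2}^{2}$, and $c_{6}(\cU) = 56d_{2}^{3}$ genuinely relies on the relations of $\rN$ and is most safely carried out with a computer algebra system (as the authors do with Macaulay2). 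Substituting these into $\sum_{i=0}^{6} c_{i}(\cU)\rho^{12-i} = 0$ produces precisely the last generator of the stated presentation, while the first five generators are inherited unchanged from $\rA^{*}(\rN)$; this completes the identification.
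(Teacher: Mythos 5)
Your proposal is correct and follows the same route as the paper: apply the projective bundle formula to $\rQ = \PP(\cU)$ over $\rN$, compute $c(\cU) = c(\cE)^{6}/c(\cF)^{3}$ from the exact sequence \eqref{eqn:exactseqforQ}, and reduce the Chern classes modulo the relations of $\rA^{*}(\rN)$ from Proposition \ref{cohnet}. Your extra details (the derivation of the single relation from $c_{12}$ of the universal quotient, the vanishing of $c_{i}(\cU)$ for $i>6$, and the hand-check of $c_{1},c_{2}$) are all consistent with the paper's computation.
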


\begin{proof}
Since $\rQ$ is a projective bundle $\PP(\cU)$ over $\rN$, by \cite[Example 8.3.4]{Ful98},
\[
	\rA^{*}(\rQ) = \rA^{*}(\rN)[\rho]/\langle
	\sum_{i=0}^{12}\rho^{i}c_{12-i}(\cU)\rangle.
\]
By \eqref{eqn:exactseqforQ},
\begin{equation}\label{eqn:ChernclassU}
\begin{split}
	c(\cU) =&\;
	(c(\cE))^{6}/(c(\cF))^{3} =
	(1+b_{1}+b_{2}+b_{3})^{6}/(1+b_{1}+d_{2})^{3}\\
	= &\; 1 + 3b_{1} + 3b_{1}^{2} + 6b_{2} - 3d_{2}
	-b_{1}^{3} + 12b_{1}b_{2} + 2b_{1}d_{2}
	+ 27b_{2}^{2} - 48b_{2}d_{2} + 51d_{2}^{2}
	+ 28b_{1}d_{2}^{2}+56d_{2}^{3}
\end{split}
\end{equation}
in $\rA^{*}(\rN)$. Therefore we obtain the presentation. 
\end{proof}

\subsection{A digression into universal plane curves}\label{ssec:univcurve}

Let $\cC_d$ be the total space of degree $d$ plane curves, which is the incidence variety 
\[
	\cC_{d} := \{(p, C)\;|\; p \in C\} \subset 
	\PP^{2} \times |\cO_{\PP^{2}}(d)|.
\]
Then $\cC_d$ is isomorphic to a projective bundle $\pi : \PP(\cE) \to \PP^{2}$ where $\cE$ is given by 
\[
	\ses{\cE}{\cO_{\PP^2}\otimes \rH^0(\cO_{\PP^2}(d))}{\cO_{\PP^2}(d)}. 
\]
Note that the rank of $\cE$ is $r_d:=\frac{d(d+3)}{2}$. By \cite[Example 8.3.4]{Ful98}, 
\begin{equation}\label{eqn:Chowuniversalcurve}
	\rA^*(\cC_d) \cong \QQ[k,\eta]/\langle k^3, 
	\eta^{r_d}-d\eta^{r_d-1}k+d^2\eta^{r_d-2}k^2 \rangle
\end{equation}
where $k$ is the hyperplane class of $\PP^{2}$ and $\eta=c_1(\cO_{\PP(\cE)}(1))$.

Also from the generalized Euler sequence (\cite[Example 3.2.11]{Ful98})
\[
	0 \to \cO_{\cC_{d}} \to \pi^{*}\cE \otimes \cO_{\cC_{d}}(1) \to 
	\cT_{\cC_{d}} \to \pi^{*}\cT_{\PP^{2}} \to 0, 
\]
we obtain the Chern character
\[
ch(\cC_{d}) = (r_{d}+1-e^{dk})e^{\eta}+3e^{k}-2.
\]
In particular, 
\begin{equation}\label{eqn:twoChernclassesCd}
	c_{1}(\cC_{d}) = (3-d)k+r_{d}\eta, \;
	c_{2}(\cC_{d}) = (d^{2}-3d+3)k^{2}+(3r_{d}-dr_{d}+d)k\eta
	+\frac{r_{d}(r_{d}-1)}{2}\eta^{2}.
\end{equation}

\subsection{Chow ring of $\rM^{+}$}
As we have seen in Proposition \ref{prop:mainprop}, $q : \rM^{+} \to \rQ$ is a regular blow-up of $\rQ$ and the blow-up center is isomorphic to $\mathrm{Fl}(V) \cong \cC_{1} \cong \PP\Omega_{\PP V}(1)$. By using the blow-up formula of Chow ring (\cite[Theorem 1 in Appendix]{Kee92}), we are able to obtain $\rA^{*}(\rM^{+})$. 

\begin{proposition}[\protect{\cite[Section 3.3.3]{DM11}}]\label{prop:moduliPVdual}
For the projection map $p : \rQ \to \rN$, the image $p(\mathrm{Fl}(V))$ parametrizes the extension classes $0 \to \cO_{\ell}(-1) \to G_{\ell} \to \cO_{\PP V}(1) \to 0$.
\end{proposition}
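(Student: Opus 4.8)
The plan is to pin down the subvariety $\mathrm{Fl}(V)\subset \rQ$ explicitly, to compute its image under the bundle projection $p$, and then to recognize the $\rN$-object lying over that image as the asserted extension. Rather than analyzing the relative $\cExt$-sheaf of the universal quotient over the degeneration locus directly, I would lean on the set-theoretic descriptions already established in Lemmas \ref{lem:QisBridgeland} and \ref{lem:rigidmap}, together with the limit description of collinear points in Remark \ref{rem:setmapn}.

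First I would recall from Lemma \ref{lem:QisBridgeland} that $\mathrm{Fl}(V)$ is the flipping locus of the first Bridgeland wall-crossing, realized inside $\rQ$ as the $\PP\rH^{0}(I_{p}(1))^{*}$-bundle over $\PP V$. Since $\rH^{0}(I_{p}(1))\subset V^{*}=\rH^{0}(\cO_{\PP^{2}}(1))$ is the pencil of linear forms vanishing at $p$, a point of the $\PP^{1}$-fiber is precisely a line $\ell$ through $p$, so $\mathrm{Fl}(V)$ is the incidence variety $\{(p,\ell)\mid p\in\ell\}$.

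Next I would determine $p(\mathrm{Fl}(V))$. By Proposition \ref{prop:mainprop} the blow-down $q:\rM^{+}\to\rQ$ satisfies $p\circ q=w$ and $q(E)=\mathrm{Fl}(V)$, where $E$ is its exceptional divisor and $w:\rM^{+}\to\rN$ is the map of Lemma \ref{lem:rigidmap}. Hence $p(\mathrm{Fl}(V))=w(E)$, and the set-theoretic formula for $w$ on $E$ (case (3)) sends a pair $(s,I_{p,C}(1))$ to $E_{\ell}$, where $\ell$ is the line determined by $p$ and $s$. Thus $p|_{\mathrm{Fl}(V)}$ factors through the projection $(p,\ell)\mapsto\ell$, and its image is exactly the contracted locus $\{E_{\ell}\mid\ell\in\PP V^{*}\}\cong\PP V^{*}$ of Remark \ref{rem:setmapn}(1).

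Finally I would identify the object. By Remark \ref{rem:setmapn}(1), $E_{\ell}\in\rN$ is the unique non-split extension of $\cO_{\PP^{2}}$ by $\cO_{\ell}(-2)$, so twisting $\ses{\cO_{\ell}(-2)}{E_{\ell}}{\cO_{\PP^{2}}}$ by $\cO_{\PP^{2}}(1)$ gives $\ses{\cO_{\ell}(-1)}{G_{\ell}}{\cO_{\PP V}(1)}$ with $G_{\ell}=E_{\ell}(1)$, which is the claimed sequence. I expect the only delicate points to be the twist/normalization bookkeeping and the assertion that $p|_{\mathrm{Fl}(V)}$ depends on $\ell$ alone; the latter is precisely the rigidity used in the proof of Lemma \ref{lem:rigidmap} (the map to $\rN$ forgets the configuration of collinear points and records only their supporting line), while uniqueness of the non-split extension follows from $\Ext^{1}(\cO_{\PP V}(1),\cO_{\ell}(-1))\cong\rH^{1}(\cO_{\ell}(-2))\cong\rH^{0}(\cO_{\ell})^{*}\cong\CC$ by Serre duality on $\ell\cong\PP^{1}$.
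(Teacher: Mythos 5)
Your argument is correct, but note that the paper itself contains no proof of this statement: it is quoted directly from \cite[Section 3.3.3]{DM11}, where the boundary $\PP V^{*}\subset \rN$ is identified by computing the cokernel of a degenerate Kronecker module (equivalently, by taking the limit of $I_{Z}(1)$ as the three points become collinear --- this is exactly the content of Remark \ref{rem:setmapn}(1)). Your route is genuinely different: you pass to the master space $\rM^{+}$, use $p\circ q=w$ and $q(E)=\mathrm{Fl}(V)$ from Proposition \ref{prop:mainprop} to reduce $p(\mathrm{Fl}(V))$ to $w(E)$, and then read off the answer from case (3) of the set-theoretic description of $w$ together with Remark \ref{rem:setmapn}(1). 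This buys a derivation internal to the paper's framework, at the cost of invoking the much heavier Proposition \ref{prop:mainprop}; since that proposition and Lemma \ref{lem:rigidmap} are established without reference to Proposition \ref{prop:moduliPVdual}, there is no circularity. The cited approach is shorter and explains \emph{why} the boundary object is $E_{\ell}$ (it is the unique semistable degeneration of collinear triples), whereas yours explains how it fits the wall-crossing picture. Two points to tidy up: (i) in the paper's normalization the object attached to $j(\ell)\in\rN$ is $E_{\ell}$ itself, sitting in $\ses{\cO_{\ell}(-2)}{E_{\ell}}{\cO_{\PP^{2}}}$, so the $G_{\ell}$ of the statement is $E_{\ell}(1)$; your ``bookkeeping'' remark is right, and since twisting induces an isomorphism $\Ext^{1}(\cO_{\PP^{2}},\cO_{\ell}(-2))\cong\Ext^{1}(\cO_{\PP V}(1),\cO_{\ell}(-1))\cong\CC$, the extension class is unambiguous either way; (ii) your identification of the fiber $\PP\rH^{0}(I_{p}(1))^{*}$ with the pencil of lines through $p$ implicitly uses that $\PP W^{*}$ is canonically identified with $\PP W$ when $\dim W=2$, which is harmless but worth stating.
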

Because the extension determined by a choice of a line $\ell \subset \PP V$, $p(\mathrm{Fl}(V)) \cong \PP V^{*}$. 

Consider the following fiber product diagram:
\begin{equation}\label{eqn:squarediagram}
	\xymatrix{\mathrm{Fl}(V) \ar[r]^{i} \ar[d]_{p'} & \rQ \ar[d]^{p}\\
	\PP V^{*} \ar[r]^{j} & \rN}
\end{equation}

\begin{lemma}\label{lem:classesonN}
\begin{enumerate}
\item Let $\cF$ (resp. $\cE$) be the universal rank 2 (resp. rank 3) bundle on $\rN$. Then $j^{*}(\cF) \cong \Omega_{\PP V^{*}}$ and $j^{*}(\cE) \cong \wedge^{2}V^{*} \otimes \cO_{\PP V^{*}}(-1) \cong \cO_{\PP V^{*}}(-1)^{3}$.
\item Let $\rA^{*}(\PP V^{*}) = \QQ[h]/\langle h^{3}\rangle$ where $h$ is the hyperplane class. Then $j^{*} : \rA^{*}(\rN) \to \rA^{*}(\PP V^{*})$ is given by
\[
	j^{*}(b_{1}) = -3h, \; j^{*}(b_{2}) = 3h^{2}, \; j^{*}(b_{3}) = 0, \;
	j^{*}(d_{2}) = 3h^{2}.
\]
\item The class $[\PP V^{*}]$ in $\rA^{*}(\rN)$ is $-3b_{1}^{2}b_{2} + 5b_{1}^{2}d_{2}$.
\end{enumerate}
\end{lemma}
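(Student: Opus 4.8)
The plan is to establish the three parts in order, deducing (2) from (1) and (3) from (2) together with the ring structure of Proposition \ref{cohnet}. For (1), I would first make the embedding $j : \PP V^{*} \hookrightarrow \rN$ explicit. By Remark \ref{rem:setmapn} and Proposition \ref{prop:hilbcon}, $j(\PP V^{*})$ is the image under the contraction $t$ of the divisor of collinear triples, so a point $[\ell] \in \PP V^{*}$ corresponds to the object $E_{\ell}$; by Proposition \ref{prop:moduliPVdual} its twisted-up sheaf $G_{\ell}$ sits in $0 \to \cO_{\ell}(-1) \to G_{\ell} \to \cO_{\PP V}(1) \to 0$. First I would relativize this over $\PP V^{*}$: using the universal line (the incidence divisor $\mathcal{D} \subset \PP V^{*}\times \PP V$ cut out by the tautological pairing, hence resolved by $0 \to \cO_{\PP V^{*}}(-1)\boxtimes \cO_{\PP V}(-1) \to \cO \to \cO_{\mathcal{D}} \to 0$), I would build a sheaf $\mathcal{G}$ on $\PP V^{*}\times \PP V$ restricting to $G_{\ell}$ on each slice. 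Applying the relative Kronecker resolution $j^{*}\cF \otimes \cO_{\PP V}(1) \to j^{*}\cE \otimes \cO_{\PP V}(2) \to \mathcal{G} \to 0$ and pushing forward along $\pi : \PP V^{*}\times \PP V \to \PP V^{*}$, I would read off $j^{*}\cF$ and $j^{*}\cE$ as the two bundles in this resolution, identifying the source with $\Omega_{\PP V^{*}}$ via the dual Euler sequence $0 \to \Omega_{\PP V^{*}} \to V^{*}\otimes \cO_{\PP V^{*}}(-1) \to \cO_{\PP V^{*}} \to 0$. Alternatively, and perhaps more cleanly, I would use Remark \ref{rem:setmapn}(2) to realize $\cE$ as the restriction of the tautological subbundle of $\mathrm{Gr}(3,\mathrm{Sym}^{2}V^{*})$, whose pullback along $[f] \mapsto f\cdot V^{*}$ is $\cO_{\PP V^{*}}(-1)\otimes V^{*}$; since $\wedge^{2}V^{*}\otimes \cO_{\PP V^{*}}(-1)$ is likewise $\cO_{\PP V^{*}}(-1)^{3}$, the two agree as bundles, the $\GL(V)$-equivariant identification being pinned down by tracing the construction.

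Part (2) is then immediate. From the computation above, $c(j^{*}\cE) = c(\cO_{\PP V^{*}}(-1)^{3}) = (1-h)^{3} = 1 - 3h + 3h^{2}$ on $\PP V^{*} = \PP^{2}$ (where $h^{3}=0$), giving $j^{*}b_{1} = -3h$, $j^{*}b_{2} = 3h^{2}$, and $j^{*}b_{3} = -h^{3} = 0$; similarly $c(\Omega_{\PP V^{*}}) = (1-h)^{3}$ for the rank-$2$ bundle gives $j^{*}d_{2} = 3h^{2}$, and $j^{*}d_{1} = -3h$, consistent with the identification $d_{1} = b_{1}$ of Proposition \ref{cohnet}.

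For (3), I would compute $[\PP V^{*}] = j_{*}(1) \in \rA^{4}(\rN)$ by Poincar\'e duality. Since $\rank \rA^{2}(\rN) = \rank \rA^{4}(\rN) = 3$ (Proposition \ref{prop:ppolyN}) and the intersection pairing is perfect, the class is determined by the three numbers $\int_{\rN} j_{*}(1)\cdot \mu = \int_{\PP V^{*}} j^{*}\mu$ as $\mu$ ranges over a basis of $\rA^{2}(\rN)$, say $\{b_{1}^{2}, b_{2}, d_{2}\}$. By part (2) these equal $\int_{\PP^{2}}9h^{2} = 9$, $\int_{\PP^{2}}3h^{2} = 3$, and $\int_{\PP^{2}}3h^{2} = 3$. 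Writing the unknown class in a basis of $\rA^{4}(\rN)$, the relations of Proposition \ref{cohnet} together with a fixed normalization of the degree map $\int_{\rN} : \rA^{6}(\rN) \to \QQ$ produce the full table of top intersection numbers, and solving the resulting linear system yields $[\PP V^{*}] = -3b_{1}^{2}b_{2} + 5b_{1}^{2}d_{2}$ (equivalently $-3b_{2}d_{2} + 6d_{2}^{2}$ modulo the relations). I expect the main obstacle to be part (1): constructing the universal sheaf $\mathcal{G}$ and, above all, tracking the correct twists by line bundles pulled back from $\PP V^{*}$ through the relative resolution, since the universal bundles $\cF, \cE$ are only canonically defined up to such a twist. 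The intersection bookkeeping in (3) is routine once $\int_{\rN}$ is normalized, for instance against a computation on $\bH(3)$ transported through the contraction $t$, or directly via Macaulay2.
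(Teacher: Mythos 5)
Your proposal is correct, and for parts (2) and (3) it follows the paper's proof essentially verbatim: (2) is read off from the Chern classes of the bundles in (1), and (3) is obtained by writing the unknown class as $xb_{1}^{4}+yb_{1}^{2}b_{2}+zb_{1}^{2}d_{2}$ and pairing against the basis $\{b_{1}^{2},b_{2},d_{2}\}$ of $\rA^{2}(\rN)$, with right-hand sides $9,3,3$ exactly as you computed; the paper imports the needed top intersection numbers on $\rN$ from Tjotta's table rather than re-deriving them, but your normalization plan (one degree computation plus the ring presentation, or a transport through $\bH(3)$) is an equivalent way to fill that in. The only real divergence is in part (1). The paper does not construct a universal sheaf on $\PP V^{*}\times \PP V$: it quotes Dr\'ezet--Maican for the fact that the universal Kronecker map restricted over $[\ell]\in\PP V^{*}$ has the form $\cO_{\PP V}(-1)\otimes D\to\cO_{\PP V}\otimes\wedge^{2}V$ with $D\subset V$ the plane corresponding to $\ell$, identifies the family of planes $D$ with $\Omega_{\PP V^{*}}(1)$ via the dual Euler sequence $0\to\Omega_{\PP V^{*}}(1)\to\cO_{\PP V^{*}}\otimes V\to\cO_{\PP V^{*}}(1)\to 0$, and then fixes the unknown common twist $L$ by the single equation $c_{1}(\cF)=c_{1}(\cE)$ (i.e.\ $b_{1}=d_{1}$ in $\rA^{*}(\rN)$), which forces $L\cong\cO_{\PP V^{*}}(-1)$. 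This is precisely the device that resolves the twist ambiguity you flagged as the main obstacle, and it is quicker than either relativizing the extension $0\to\cO_{\ell}(-1)\to G_{\ell}\to\cO_{\PP V}(1)\to 0$ or tracing the $\GL(V)$-equivariance through the Grassmannian embedding; the latter does give $j^{*}\cE\cong\cO_{\PP V^{*}}(-1)\otimes V^{*}$ cheaply, but you would still need a separate argument to pin down $j^{*}\cF\cong\Omega_{\PP V^{*}}$, for which the $c_{1}$ normalization is again the shortest route. For the purposes of (2) and (3) only the Chern classes matter, so either of your variants would suffice.
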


\begin{proof}
Fix $[\ell] \in \PP V^{*}$, which is determined by $z \in V^{*}$. Then on the fiber corresponding to $[\ell] \in \PP V^{*} \subset \rN$, the restriction of the universal map $\phi : \cF_{\ell} \to \cE_{\ell}\otimes V^{*}$ is given by $\cO_{\PP V}(-1) \otimes D \to \cO_{\PP V} \otimes \wedge^{2} V \cong \cO_{\PP V}\otimes V^{*}$, where $D$ is the plane in $V$ corresponding to $\ell$ (\cite[Section 3.3.3]{DM11}).

A twisted Euler sequence
\[
	0 \to \cO_{\PP V^{*}}(-1) \to \cO_{\PP V^{*}}\otimes V^{*} \to
	\cT_{\PP V^{*}}(-1) \to 0,
\]
is restricted to $0 \to \langle z \rangle \to V^{*} \to V^{*}/\langle z \rangle \to 0$ at $\ell$. So the dual sequence
\[
	0 \to \Omega_{\PP V^{*}}(1) \to \cO_{\PP V^{*}}\otimes V \to
	\cO_{\PP V^{*}}(1) \to 0
\]
is restricted to $0 \to D \to V \to V/D \to 0$. Thus we can see that the pull-back of the universal bundles and maps are
\[
	\cO_{\PP V}(-1) \otimes \Omega_{\PP V^{*}}(1) \otimes L \to
	\cO_{\PP V} \otimes \cO_{\PP V^{*}}\otimes \wedge^{2}V^{*} \otimes L
\]
for some line bundle $L \in \mathrm{Pic}(\PP V^{*})$. From $c_{1}(\cF) = c_{1}(\cE)$, it is straightforward to see that $L \cong \cO_{\PP V^{*}}(-1)$. This proves (1). Item (2) is an immediate consequence of Item (1).

By the result in Section \ref{sec:ChowofN}, we can describe any codimension 4 Chow class in $\rN$ as a linear combination $x b_{1}^{4} + y b_{1}^{2}b_{2} + z b_{1}^{2}d_{2}$ for $x, y, z \in \QQ$. By taking pull-backs of $b_{1}^{2}$, $b_{2}$ and $d_{2}$ and evaluating the intersection pairing on $\rA^{*}(\rN)$ (for instance, see \cite[p.23, Table 2]{Tjo98}), we obtain three linear equations
\begin{eqnarray*}
	57x + 27y + 18z &=& 9\\
	27x + 14y + 9z &=& 3\\
	18x + 9y + 6z &=& 3.
\end{eqnarray*}
The unique solution is $(x, y, z) = (0, -3, 5)$.
\end{proof}

\begin{corollary}
The normal bundle $\cN_{\PP V^{*}/\rN}$ has the total Chern class 
\[
	c(\cN_{\PP V^{*}/\rN}) = 1 + 6h + 21h^{2}.
\]
\end{corollary}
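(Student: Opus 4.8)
The plan is to read off the normal bundle from the short exact sequence attached to the closed embedding $j : \PP V^{*} \hookrightarrow \rN$ of smooth varieties,
\[
	0 \to \cT_{\PP V^{*}} \to j^{*}\cT_{\rN}
	\to \cN_{\PP V^{*}/\rN} \to 0,
\]
so that, by the Whitney sum formula, $c(\cN_{\PP V^{*}/\rN}) = c(j^{*}\cT_{\rN})/c(\cT_{\PP V^{*}})$. The denominator $c(\cT_{\PP V^{*}}) = (1+h)^{3} = 1 + 3h + 3h^{2}$ is immediate (all computations take place in $\QQ[h]/\langle h^{3}\rangle$). The real input is a formula for $j^{*}\cT_{\rN}$, which I will extract from the description of $\rN$ as a moduli space of Kronecker quiver representations.

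Since $\rN$ is the geometric quotient of the stable locus by the free action of $G = (\GL(F)\times \GL(E))/\CC^{*}$, and a stable representation has only scalar endomorphisms, the tangent bundle fits into the standard four-term deformation complex
\[
	0 \to \cO_{\rN} \to \cEnd(\cF)\oplus\cEnd(\cE)
	\to \cHom(\cF, V^{*}\otimes\cE) \to \cT_{\rN} \to 0,
\]
where the middle map is the infinitesimal action $(\alpha,\beta)\mapsto (\mathrm{id}\otimes\beta)\phi - \phi\alpha$; a rank count $18 - 9 - 4 + 1 = 6 = \dim\rN$ confirms the shape. Hence $c(\cT_{\rN}) = c(\cHom(\cF, V^{*}\otimes\cE))/\big(c(\cEnd(\cF))\,c(\cEnd(\cE))\big)$, and I will pull this back along $j$.

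Here Lemma \ref{lem:classesonN}(1) does the heavy lifting: $j^{*}\cF \cong \Omega_{\PP V^{*}}$ and $j^{*}\cE \cong \cO_{\PP V^{*}}(-1)^{3}$, while $V^{*}$ is a trivial rank-$3$ bundle. Thus the three restricted factors are $\cHom(\Omega_{\PP V^{*}}, \cO(-1)^{9}) \cong \cT_{\PP V^{*}}(-1)^{9}$, the bundle $\cEnd(\Omega_{\PP V^{*}})$, and $\cEnd(\cO(-1)^{3}) \cong \cO^{9}$. A short twist computation on $\PP^{2}$ gives $c(\cT_{\PP V^{*}}(-1)) = 1 + h + h^{2}$, hence $c(\cT_{\PP V^{*}}(-1)^{9}) = (1+h+h^{2})^{9} = 1 + 9h + 45h^{2}$; for the rank-$2$ bundle $\Omega_{\PP V^{*}}$ one has $c(\cEnd(\Omega_{\PP V^{*}})) = 1 - c_{1}(\Omega)^{2} + 4c_{2}(\Omega) = 1 + 3h^{2}$; and the last factor is trivial. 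Dividing, $c(j^{*}\cT_{\rN}) = (1 + 9h + 45h^{2})/(1+3h^{2}) = 1 + 9h + 42h^{2}$, and then dividing by $1 + 3h + 3h^{2}$ produces $1 + 6h + 21h^{2}$, which is the asserted value.

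The only genuinely delicate point is the identification of the deformation complex, and in particular that the cokernel of the infinitesimal action is globally $\cT_{\rN}$; this rests on the freeness of the $G$-action and on stability coinciding with semistability for the coprime dimension vector $(2,3)$. Once that is in place, everything reduces to a routine manipulation of Chern classes truncated at $h^{3}$. As a sanity check, the same answer for $c(j^{*}\cT_{\rN})$ must be obtainable by first writing $c(\cT_{\rN})$ in terms of $b_{1}, b_{2}, b_{3}, d_{2}$ and then applying the pullback formulas $j^{*}(b_{1}) = -3h$, $j^{*}(b_{2}) = j^{*}(d_{2}) = 3h^{2}$, $j^{*}(b_{3}) = 0$ of Lemma \ref{lem:classesonN}(2), which agree with the bundle restrictions used above.
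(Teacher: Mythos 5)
Your proof is correct and follows essentially the same route as the paper: both compute $c(\cN_{\PP V^{*}/\rN})$ as $c(\cT_{\rN}|_{\PP V^{*}})/c(\cT_{\PP V^{*}})$, obtaining the numerator $1+9h+42h^{2}$ from the four-term deformation complex for $\cT_{\rN}$ together with the restriction formulas of Lemma \ref{lem:classesonN}. The only (harmless) difference is that you restrict the deformation complex to $\PP V^{*}$ before taking Chern classes, whereas the paper pulls back the already-computed global expression \eqref{eqn:ChernclassN} via $j^{*}(b_{1})=-3h$, $j^{*}(b_{2})=j^{*}(d_{2})=3h^{2}$, $j^{*}(b_{3})=0$ --- as you note yourself, the two orders agree.
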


\begin{proof}
From \eqref{eqn:ChernclassN} and Lemma \ref{lem:classesonN},
\[
	c(\cT_{\rN}|_{\PP V^{*}}) = 1 + 9h + 42 h^{2}.
\]
Then
\[
	c(\cN_{\PP V^{*}/\rN})
	= \frac{c(\cT_{\rN}|_{\PP V^{*}})}{c(\cT_{\PP V^{*}})}
	= \frac{1+9h + 42h^{2}}{1+3h + 3h^{2}}
	= 1 + 6h + 21h^{2}.
\]
\end{proof}

From Section \ref{ssec:univcurve}, 
\[
	\rA^{*}(\mathrm{Fl}(V)) = \rA^{*}(\cC_{1}) \cong 
	\QQ[h, k]/\langle k^{3}, h^{2} - hk + k^{2}\rangle
\]
where $k$ is the pull-back of the hyperplane class on $\PP V$. One may check that $h=\cO_{\PP(\cE)}(1)$ is isomorphic to the pull-back of the hyperplane class on $\PP V^{*}$. On the other hand, since $\mathrm{Fl}(V) \cong \PP \cT_{\PP V^{*}}(-1)$, from \cite[Example 8.3.4]{Ful98}, 
\[
	\rA^{*}(\mathrm{Fl}(V)) = 
	\rA^{*}(\PP \cT_{\PP V^{*}}(-1), \QQ) \cong
	\QQ[h, \xi]/\langle h^{3}, h^{2} + h\xi + \xi^{2}\rangle
\]
where $\xi = c_{1}(\cO_{\PP \cT_{\PP V^{*}}(-1)}(1))$. Then $k = h + \xi$. We leave checking it to the reader. 

\begin{lemma}\label{lem:restrictiontoFlV}
The restriction map $i^{*} : \rA^{*}(\rQ) \to \rA^{*}(\mathrm{Fl}(V))$ is given by
\[
	i^{*}(b_{1}) = -3h, \; i^{*}(b_{2}) = 3h^{2}, \; i^{*}(b_{3}) = 0,\;
	i^{*}(c_{2}) = 3h^{2}, \; i^{*}(\rho) = \xi.
\]
\end{lemma}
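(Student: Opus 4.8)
The plan is to treat the five generators in two groups: the four classes $b_{1},b_{2},b_{3},d_{2}$, which are pulled back from $\rN$ along $p$, and the relative hyperplane class $\rho=c_{1}(\cO_{\rQ}(1))$, which is intrinsic to the projective bundle structure $\rQ=\PP(\cU)$. The first group should fall out purely formally. Each of $b_{1},b_{2},b_{3},d_{2}$ is $p^{*}$ of the class of the same name on $\rN$ (they are the Chern classes of $\cE$ and $\cF$), so the commutativity of the square \eqref{eqn:squarediagram} gives $i^{*}\circ p^{*}=p'^{*}\circ j^{*}$, whence $i^{*}(b_{k})=p'^{*}(j^{*}(b_{k}))$ and likewise for $d_{2}$. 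Now Lemma \ref{lem:classesonN}(2) supplies $j^{*}(b_{1})=-3h$, $j^{*}(b_{2})=3h^{2}$, $j^{*}(b_{3})=0$, $j^{*}(d_{2})=3h^{2}$, and since the class $h$ on $\mathrm{Fl}(V)$ is by construction the pull-back $p'^{*}$ of the hyperplane class on $\PP V^{*}$, these expressions are left unchanged by $p'^{*}$. This disposes of the first four equalities at once.

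The substantive point is $i^{*}(\rho)=\xi$, and I would obtain it by identifying the restriction of the tautological line bundle. The tautological inclusion $\cO_{\rQ}(-1)\hookrightarrow p^{*}\cU$ restricts over $\mathrm{Fl}(V)$ to a sub-line-bundle of $p'^{*}(j^{*}\cU)$; more precisely, $\mathrm{Fl}(V)$ sits inside the sub-projective-bundle $p^{-1}(\PP V^{*})=\PP(j^{*}\cU)$ as the projectivization $\PP(\cW)$ of a rank $2$ subbundle $\cW\subset j^{*}\cU$. The fibre of $p'\colon\mathrm{Fl}(V)\to\PP V^{*}$ over a line $\ell$ is the $\PP^{1}$ of points $p\in\ell$, which is exactly $\PP(D)$ for the plane $D=\Omega_{\PP V^{*}}(1)|_{\ell}=\ker z\subset V$ appearing in the proof of Lemma \ref{lem:classesonN}; hence $\cW\cong\Omega_{\PP V^{*}}(1)$ up to a twist by a line bundle from $\PP V^{*}$, and the inclusion $\cW\hookrightarrow j^{*}\cU$ can be read off from the explicit form of the restricted universal morphism recorded there. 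Since the relative $\cO(1)$ of a sub-projective-bundle is the restriction of the ambient relative $\cO(1)$, this gives $i^{*}\cO_{\rQ}(1)=\cO_{\PP(\cW)}(1)$, so $i^{*}(\rho)$ is the tautological class of $\PP(\Omega_{\PP V^{*}}(1))$, which as a variety is $\mathrm{Fl}(V)=\PP(\cT_{\PP V^{*}}(-1))$, i.e. $\xi$, up to an a priori multiple of $h$ coming from the undetermined twist.

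The main obstacle is precisely fixing that twist, i.e. confirming the absence of an $h$-correction. The coefficient of $\xi$ is easy: restricting to a fibre $F\cong\PP^{1}$ of $p'$, one has $h|_{F}=0$, while $\rho|_{F}=c_{1}(\cO_{\rQ}(1))|_{F}=\cO_{\PP^{1}}(1)$ has degree $1$, matching $\deg(\xi|_{F})=1$ in the presentation $\rA^{*}(\mathrm{Fl}(V))=\QQ[h,\xi]/\langle h^{3},h^{2}+h\xi+\xi^{2}\rangle$; this forces the $\xi$-coefficient to be $1$. To kill the remaining $h$-term I would determine the twisting line bundle on $\PP V^{*}$ exactly, either by tracking $\cO_{\rQ}(-1)|_{\mathrm{Fl}(V)}$ canonically through the destabilizing extension $\ses{\cO_{\PP^{2}}(-3)[1]}{F'}{I_{p}(1)}$ of Lemma \ref{lem:QisBridgeland} together with the family of Proposition \ref{prop:moduliPVdual}, or by a $c_{1}$-matching argument of the same flavor as the determination $L\cong\cO_{\PP V^{*}}(-1)$ in Lemma \ref{lem:classesonN}. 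Once the twist is shown to be trivial, $i^{*}(\rho)=\xi$ on the nose and all five restrictions are established.
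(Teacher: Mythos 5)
Your overall route coincides with the paper's: the first four restrictions are exactly as you say (pull back along the commuting square \eqref{eqn:squarediagram} and quote Lemma \ref{lem:classesonN}(2); the paper dismisses them with ``it suffices to check the last equality''), and for $i^{*}(\rho)=\xi$ the paper likewise realizes $\mathrm{Fl}(V)$ as the projectivization of a rank-$2$ subbundle of $j^{*}\cU$ and restricts the tautological bundle. You have also correctly isolated the only real issue, namely the normalization of that subbundle.

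The gap is that you stop exactly at the step you call ``the main obstacle'': the twist is never actually computed, and the second method you offer for computing it (``a $c_{1}$-matching argument of the same flavor as the determination of $L$ in Lemma \ref{lem:classesonN}'') does not work as stated. In Lemma \ref{lem:classesonN} the twist was pinned down by the relation $c_{1}(\cF)=c_{1}(\cE)$ on $\rN$; here there is no analogous identity relating $\cW$ to $j^{*}\cU$, and the Chern classes of an ambient bundle do not constrain those of an arbitrary rank-$2$ subbundle. What the paper actually does is use Proposition \ref{prop:moduliPVdual} (your first alternative, essentially) to produce a short exact sequence $0 \to \cT_{\PP V^{*}}(-1)\otimes A \to j^{*}\cU \to B^{\oplus 10} \to 0$ with $A,B\in\mathrm{Pic}(\PP V^{*})$ \emph{both} unknown: the sub is $\pi_{1*}(\cO_{U}\otimes\pi_{2}^{*}\cO_{\PP V}(1))\otimes A\cong\cT_{\PP V^{*}}(-1)\otimes A$ and the quotient is $\rH^{0}(\cO_{\PP V}(3))\otimes B$. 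Comparing with $c(j^{*}\cU)=1-9h+36h^{2}$, computed from Lemma \ref{lem:classesonN}(1) and the defining sequence \eqref{eqn:exactseqforQ}, one needs \emph{both} the degree-$1$ and degree-$2$ parts: degree $1$ alone gives a single linear relation between the two unknown twists, and only the quadratic condition singles out $A\cong\cO_{\PP V^{*}}$, $B\cong\cO_{\PP V^{*}}(-1)$. With $A$ trivial one gets $i^{*}\cO_{\rQ}(-1)=\cO_{\PP\cT_{\PP V^{*}}(-1)}(-1)$ on the nose, hence $i^{*}\rho=\xi$. (Note also that the untwisted subbundle is $\cT_{\PP V^{*}}(-1)=\Omega_{\PP V^{*}}(2)$, not $\Omega_{\PP V^{*}}(1)$; this is immaterial for the underlying variety but shifts the tautological class by a multiple of $h$, which is precisely why the normalization must be done carefully.)
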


\begin{proof}
It suffices to check the last equality. Recall that there is a restricted exact sequence
\[
	\xymatrix{ 0 \ar[r] & j^{*}\cF \otimes \rH^{0}(\cO_{\PP V}(1))
	\ar[r] \ar@{=}[d] & j^{*}\cE \otimes \rH^{0}(\cO_{\PP V}(2))
	\ar[r] \ar@{=}[d] & j^{*}\cU \ar[r] & 0.\\
	& \Omega_{\PP V^{*}} \otimes \rH^{0}(\cO_{\PP V}(1))
	& \cO_{\PP V^{*}}(-1)^{3} \otimes \rH^{0}(\cO_{\PP V}(2))}
\]
Then $c(j^{*}\cU) = 1 - 9h + 36h^{2}$.

From Proposition \ref{prop:moduliPVdual}, if we take the pull-back $j^{*}$, we have another exact sequence
\[
	0 \to \pi_{1 *}(\cO_{U}\otimes \pi_{2}^{*}\cO_{\PP V}(1)) 
	\otimes A \to j^{*}\cU \to
	\rH^{0}(\cO_{\PP V}(3)) \otimes B\to 0.
\]
where $U \subset \PP V^{*} \times \PP V$ is the universal line over $\PP V^{*}$, and $A, B \in \mathrm{Pic}(\PP V^{*})$. Two maps $\pi_{1} : U \to \PP V^{*}$ and $\pi_{2} : U \to \PP V$ are projections. From the exact sequence
\[
	0 \to \cO_{\PP V^{*} \times \PP V}(-1, -1) \to
	\cO_{\PP V^{*} \times \PP V} \to \cO_{U} \to 0,
\]
we have
\[
	0 \to \cO_{\PP V^{*}}(-1) \to \cO_{\PP V^{*}} \otimes V^{*}
	\to \pi_{1 *}(\cO_{U}\otimes \pi_{2}^{*}\cO_{\PP V}(1)) \to 0
\]
by tensoring $\pi_{2}^{*}(\cO_{\PP V}(1))$ and taking push-forward $\pi_{1 *} : \PP V^{*} \times \PP V \to \PP V^{*}$. Thus $\pi_{1 *}(\cO_{U}\otimes \pi_{2}^{*}\cO_{\PP V}(1)) \cong \cT_{\PP V^{*}}(-1)$. Therefore we have an exact sequence
\begin{equation}\label{eqn:restricteduniversalbundle}
	0 \to \cT_{\PP V^{*}}(-1) \otimes A \to j^{*}\cU \to B^{10} \to 0.
\end{equation}
From $c(\cT_{\PP V^{*}}(-1) \otimes A)c(B^{10}) = c(j^{*}\cU) = 1 - 9h + 36h^{2}$, we obtain that $A \cong \cO_{\PP V^{*}}$ and $B \cong \cO_{\PP V^{*}}(-1)$. Thus there is a natural inclusion $\mathrm{Fl}(V) = \PP \cT_{\PP V^{*}}(-1) \hookrightarrow \PP j^{*}\cU$, and $j^{*}\cO_{\rQ}(-1) = \cO_{\PP \cT_{\PP V^{*}}(-1)}(-1)$. Therefore $i^{*}\rho = \xi$.
\end{proof}

Let $P := p^{-1}(\PP V^{*}) \cong \PP j^{*}\cU \subset \rQ$. Then we have a normal sequence
\begin{equation}\label{eqn:normalsequence}
	0 \to \cN_{\PP \cT_{\PP V^{*}}(-1)/P} \to
	\cN_{\PP \cT_{\PP V^{*}}(-1)/\rQ} \to
	\cN_{P/Q_{4}} =p^{*}\cN_{\PP V^{*}/N} \to 0.
\end{equation}

\begin{lemma}\label{lem:normaltoP}
The normal bundle $\cN_{\PP \cT_{\PP V^{*}}/P}$ is isomorphic to $(p^{*}\cO_{\PP V^{*}}(-1) \otimes \cO_{\PP \cT_{\PP V^{*}}(-1)}(1))^{10}$.
\end{lemma}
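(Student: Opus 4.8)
The plan is to recognize this as the standard normal-bundle computation for a linearly embedded sub-projective-bundle, fed by the exact sequence already produced in the proof of Lemma~\ref{lem:restrictiontoFlV}. After the identifications $A \cong \cO_{\PP V^{*}}$ and $B \cong \cO_{\PP V^{*}}(-1)$ established there, the sequence \eqref{eqn:restricteduniversalbundle} reads
\[
	0 \to \cT_{\PP V^{*}}(-1) \to j^{*}\cU \to \cO_{\PP V^{*}}(-1)^{10} \to 0.
\]
Setting $W := \cT_{\PP V^{*}}(-1)$, $E := j^{*}\cU$ and $Q := \cO_{\PP V^{*}}(-1)^{10}$, the subbundle inclusion $W \hookrightarrow E$ induces exactly the closed embedding $\PP \cT_{\PP V^{*}}(-1) \hookrightarrow P = \PP j^{*}\cU$. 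The crucial structural point I would record first is that this embedding is a morphism over the common base $\PP V^{*}$, so the conormal/normal directions are purely vertical and no base-direction term enters.

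Next I would invoke the general fact: for a short exact sequence $0 \to W \to E \to Q \to 0$ of locally free sheaves on a base $X$, the induced inclusion of projective bundles in the subspace convention $\PP(\cF) = \proj(\sym \cF^{*})$ has normal bundle $\cN_{\PP W/\PP E} \cong \cO_{\PP W}(1) \otimes \pi^{*}Q$, with $\pi : \PP W \to X$ the projection. I would derive this by comparing relative tangent bundles: using the subspace-convention relative Euler description $\cT_{\PP E/X} \cong \cHom(\cO_{\PP E}(-1), \pi^{*}E/\cO_{\PP E}(-1))$, restricting to $\PP W$ and quotienting by $\cT_{\PP W/X} \cong \cHom(\cO_{\PP W}(-1), \pi^{*}W/\cO_{\PP W}(-1))$ leaves $\cHom(\cO_{\PP W}(-1), \pi^{*}Q) = \cO_{\PP W}(1)\otimes \pi^{*}Q$. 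A rank check, $\rank Q = 10 = \dim P - \dim \mathrm{Fl}(V)$, confirms consistency.

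Finally I would substitute the specific data. Here $\cO_{\PP W}(1) = \cO_{\PP \cT_{\PP V^{*}}(-1)}(1)$, and since $\pi$ is the restriction of $p : \rQ \to \rN$ to $\mathrm{Fl}(V)$, one has $\pi^{*}\cO_{\PP V^{*}}(-1) = p^{*}\cO_{\PP V^{*}}(-1)$; hence $\pi^{*}Q = (p^{*}\cO_{\PP V^{*}}(-1))^{10}$ and
\[
	\cN_{\PP \cT_{\PP V^{*}}(-1)/P} \cong \cO_{\PP \cT_{\PP V^{*}}(-1)}(1) \otimes (p^{*}\cO_{\PP V^{*}}(-1))^{10}
	= (p^{*}\cO_{\PP V^{*}}(-1) \otimes \cO_{\PP \cT_{\PP V^{*}}(-1)}(1))^{10},
\]
which is the assertion. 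The one genuine point of care—the only place I expect a sign to go wrong—is the direction of the twist: in the subspace convention one must confirm that the relative Euler sequence produces a \emph{positive} twist $\cO_{\PP W}(1)$ rather than $\cO_{\PP W}(-1)$, which a fiberwise check against the classical sequence $0 \to \cO \to \cO(1)\otimes V \to \cT_{\PP^{n}} \to 0$ settles. Everything else is formal.
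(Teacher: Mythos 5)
Your proof is correct and takes essentially the same route as the paper: both arguments feed the sequence \eqref{eqn:restricteduniversalbundle} (with $A \cong \cO_{\PP V^{*}}$, $B \cong \cO_{\PP V^{*}}(-1)$) into a comparison of relative Euler sequences over the common base $\PP V^{*}$, the paper writing out the resulting $3\times 3$ diagram explicitly where you package it as the standard formula $\cN_{\PP W/\PP E} \cong \cO_{\PP W}(1)\otimes \pi^{*}Q$. Your sign check on the twist in the subspace convention is exactly the point that matters, and it is handled correctly.
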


\begin{proof}
From \eqref{eqn:restricteduniversalbundle} and Euler sequence, we can construct a commutative diagram
\[
	\xymatrix{&& 0 \ar[d] & 0 \ar[d]\\
	0 \ar[r] & \cO_{\PP \cT_{\PP V^{*}}(-1)} \ar@{=}[d] \ar[r] &
	p^{*}\cT_{\PP V^{*}}(-1) \otimes \cO_{\PP \cT_{\PP V^{*}}(-1)}(1)
	 \ar[r] \ar[d] &
	\cT_{\PP \cT_{\PP V^{*}}(-1)} \ar[r] \ar[d] & 0\\
	0 \ar[r] & \cO_{\PP \cT_{\PP V^{*}}(-1)} \ar[r] &
	j^{*}\cU \otimes \cO_{\PP \cT_{\PP V^{*}}(-1)}(1) \ar[d] \ar[r] &
	 \cT_{P} \ar[r] \ar[d] & 0\\
	&& p^{*}\cO_{\PP V^{*}}(-1)^{10} \otimes
	\cO_{\PP \cT_{\PP V^{*}}(-1)}(1) \ar[r] \ar[d] &
	\cN_{\PP \cT_{\PP V^{*}}(-1)/P} \ar[d]\\
	&& 0 & 0.}
\]
The bottom map $p^{*}\cO_{\PP V^{*}}(-1)^{10} \otimes \cO_{\PP \cT_{\PP V^{*}}(-1)}(1) \to \cN_{\PP \cT_{\PP V^{*}}(-1)/P}$ is locally isomorphic, thus 
\[
	\cN_{\PP \cT_{\PP V^{*}}(-1)/P} \cong 
	p^{*}\cO_{\PP V^{*}}(-1)^{10} \otimes 
	\cO_{\PP \cT_{\PP V^{*}}(-1)}(1) \cong 
	(p^{*}\cO_{\PP V^{*}}(-1) \otimes 
	\cO_{\PP \cT_{\PP V^{*}}(-1)}(1))^{10}.
\]
\end{proof}

\begin{corollary}\label{cor:normalbundleFlV}
\[
	c(\cN_{\mathrm{Fl}(V)/\rQ})
	= 1 + 10\xi -4h -75\xi h - 39 h^{2} + 120 \xi h^{2}
\]
\end{corollary}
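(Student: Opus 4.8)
The plan is to read the total Chern class off the short exact sequence \eqref{eqn:normalsequence}, which exhibits $\cN_{\mathrm{Fl}(V)/\rQ}$ as an extension of $p^{*}\cN_{\PP V^{*}/\rN}$ by $\cN_{\PP\cT_{\PP V^{*}}(-1)/P}$. By Whitney's formula this gives
\[
	c(\cN_{\mathrm{Fl}(V)/\rQ}) =
	c(\cN_{\PP\cT_{\PP V^{*}}(-1)/P}) \cdot
	c(p^{*}\cN_{\PP V^{*}/\rN}),
\]
so it suffices to compute each factor inside $\rA^{*}(\mathrm{Fl}(V)) \cong \QQ[h,\xi]/\langle h^{3}, h^{2}+h\xi+\xi^{2}\rangle$ and then multiply them out, reducing throughout with the two defining relations $h^{3}=0$ and $\xi^{2}=-h^{2}-h\xi$.

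For the first factor I would invoke Lemma \ref{lem:normaltoP}, which identifies $\cN_{\PP\cT_{\PP V^{*}}(-1)/P}$ with the tenth power of the line bundle $p^{*}\cO_{\PP V^{*}}(-1)\otimes \cO_{\PP\cT_{\PP V^{*}}(-1)}(1)$, whose first Chern class is $\xi-h$. Hence this factor equals $(1+(\xi-h))^{10}$. Since $\mathrm{Fl}(V)$ is three-dimensional, every class of codimension at least $4$ vanishes; in particular $(\xi-h)^{4}=0$, so only the first four binomial terms survive. A short reduction using the two relations gives $(\xi-h)^{2}=-3h\xi$ and $(\xi-h)^{3}=6h^{2}\xi$, whence the first factor equals $1+10\xi-10h-135h\xi+720h^{2}\xi$.

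The second factor is immediate: the preceding corollary gives $c(\cN_{\PP V^{*}/\rN})=1+6h+21h^{2}$, and because $p$ restricts to the identity on the hyperplane class $h$ of $\PP V^{*}$, the pullback is again $c(p^{*}\cN_{\PP V^{*}/\rN})=1+6h+21h^{2}$. Finally I would multiply the two factors degree by degree, once more normalizing every product into the basis $\{1,h,\xi,h^{2},h\xi,h^{2}\xi\}$ of $\rA^{*}(\mathrm{Fl}(V))$; collecting terms by codimension yields $1$ in degree $0$, $10\xi-4h$ in degree $1$, $-39h^{2}-75\xi h$ in degree $2$, and $120\xi h^{2}$ in degree $3$, which is exactly the asserted total Chern class.

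There is no genuine conceptual obstacle here: the argument is a bookkeeping computation once the two input Chern classes and the ring structure are in hand. The only point that demands care is the consistent use of the relations $h^{3}=0$ and $\xi^{2}=-h^{2}-h\xi$ at every stage, together with the observation that three-dimensionality truncates both the binomial expansion $(1+(\xi-h))^{10}$ and the final product, so that no term of codimension higher than $3$ need be tracked.
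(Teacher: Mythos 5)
Your proof is correct and follows exactly the paper's argument: apply Whitney's formula to the normal bundle sequence \eqref{eqn:normalsequence}, identify the two factors as $(1+\xi-h)^{10}$ via Lemma \ref{lem:normaltoP} and $1+6h+21h^{2}$ from the preceding corollary, and multiply in $\rA^{*}(\mathrm{Fl}(V))$. Your intermediate reductions $(\xi-h)^{2}=-3h\xi$ and $(\xi-h)^{3}=6h^{2}\xi$ and the final collection of terms all check out.
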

\begin{proof}
By Lemma \ref{lem:normaltoP} and \eqref{eqn:normalsequence}, 
\begin{eqnarray*}
	c(\cN_{\mathrm{Fl}(V)/\rQ}) &=&
	c(\cN_{\mathrm{Fl}(V)/P})c(\cN_{\PP V^{*}/\rN})
	= (1+\xi-h)^{10}(1+6h+21h^{2})\\
	&=& 1 + 10\xi -4h -75\xi h - 39 h^{2} + 120 \xi h^{2}.
\end{eqnarray*}
\end{proof}

\begin{corollary}
The class of $[\mathrm{Fl}(V)]$ in $\rA^{*}(\rQ)$ is $(\rho+\frac{1}{3}b_{1})^{10}(-3b_{1}^{2}b_{2}+5b_{1}^{2}c_{2})$.
\end{corollary}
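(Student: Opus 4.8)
The plan is to compute the class by factoring the inclusion $\mathrm{Fl}(V) \hookrightarrow \rQ$ through the intermediate $P := p^{-1}(\PP V^{*}) = \PP(j^{*}\cU)$, obtaining $[\mathrm{Fl}(V)]$ first as a class in $\rA^{*}(P)$ and then pushing it forward to $\rA^{*}(\rQ)$. Write $\iota : P \hookrightarrow \rQ$ for the inclusion; since $p$ is flat and $P = p^{-1}(\PP V^{*})$, we have $\iota_{*}(1) = [P] = p^{*}[\PP V^{*}] = -3b_{1}^{2}b_{2}+5b_{1}^{2}d_{2}$ by Lemma \ref{lem:classesonN}(3) (here $d_{2}=c_{2}(\cF)$ is the class written $c_{2}$ in the statement). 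Thus the problem reduces to computing $[\mathrm{Fl}(V)]$ inside $P$ and recognizing its restriction as coming from an honest class on $\rQ$.

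For the first step I would use \eqref{eqn:restricteduniversalbundle}: $\mathrm{Fl}(V) = \PP\cT_{\PP V^{*}}(-1) = \PP(\cS)$ is the sub-projective-bundle of $P = \PP(j^{*}\cU)$ cut out by the rank-$10$ quotient $j^{*}\cU \twoheadrightarrow B^{10}$ with $B = \cO_{\PP V^{*}}(-1)$. In the paper's convention $\PP(\cU)$ parametrizes lines in $\cU$, so the tautological $\cO_{P}(-1) \hookrightarrow (p|_{P})^{*}j^{*}\cU$ composed with the quotient gives a section of $(p|_{P})^{*}B^{10}\otimes \cO_{P}(1)$ whose zero locus is exactly $\mathrm{Fl}(V)$, in the expected codimension $10$. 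Hence
\[
	[\mathrm{Fl}(V)]_{P} = c_{10}\bigl((p|_{P})^{*}B^{10}\otimes \cO_{P}(1)\bigr) = (\rho - h)^{10}\big|_{P},
\]
using $c_{1}(\cO_{P}(1)) = \rho|_{P}$, $c_{1}((p|_{P})^{*}B) = -h|_{P}$, and the splitting principle for a direct sum of copies of one line bundle. The main place to be careful is this convention bookkeeping: getting $\cO(1)$ versus $\cO(-1)$, hence the sign of $\rho$, wrong would flip the answer, so I would double-check against the tautological subsheaf inclusion dictated by $\PP(\cU) = \proj(\mathrm{Sym}^{\bullet}\cU^{*})$.

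Finally I would convert $h$ into a class restricted from $\rQ$ and push forward. By Lemma \ref{lem:classesonN}(2), $j^{*}b_{1} = -3h$, so along the Cartesian square \eqref{eqn:squarediagram} one has $h|_{P} = -\tfrac{1}{3}b_{1}|_{P}$, and therefore $(\rho - h)^{10}|_{P} = \iota^{*}\bigl((\rho+\tfrac{1}{3}b_{1})^{10}\bigr)$. The projection formula then gives
\[
	[\mathrm{Fl}(V)]_{\rQ} = \iota_{*}\iota^{*}\bigl((\rho+\tfrac{1}{3}b_{1})^{10}\bigr) = (\rho+\tfrac{1}{3}b_{1})^{10}\cdot \iota_{*}(1) = (\rho+\tfrac{1}{3}b_{1})^{10}\bigl(-3b_{1}^{2}b_{2}+5b_{1}^{2}d_{2}\bigr),
\]
which is the claimed expression. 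As an end-of-proof consistency check I would restrict both sides to $\mathrm{Fl}(V)$ and compare $i^{*}[\mathrm{Fl}(V)]$ against the self-intersection $c_{14}(\cN_{\mathrm{Fl}(V)/\rQ})$ supplied by Corollary \ref{cor:normalbundleFlV}, evaluated through the restriction maps of Lemma \ref{lem:restrictiontoFlV}.
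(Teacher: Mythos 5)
Your proposal is correct and follows essentially the same route as the paper: the paper likewise writes $[P]=p^{*}[\PP V^{*}]=-3b_{1}^{2}b_{2}+5b_{1}^{2}d_{2}$ and identifies $[\mathrm{Fl}(V)]$ inside $P$ with the top Chern class of the rank-$10$ bundle $(p^{*}\cO_{\PP V^{*}}(-1)\otimes\cO_{P}(1))^{10}$ coming from Lemma \ref{lem:normaltoP}, then multiplies. You merely make explicit the steps the paper leaves implicit (the tautological section whose zero locus is $\mathrm{Fl}(V)$, the substitution $h=-\tfrac{1}{3}b_{1}$, and the projection formula), and you correctly note that the $c_{2}$ in the statement is the class elsewhere denoted $d_{2}$.
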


\begin{proof}
In $\rQ$, the class of $[P]$ is $-3b_{1}^{2}b_{2}+5b_{1}^{2}c_{2}$ by Lemma \ref{lem:classesonN}. By Lemma \ref{lem:normaltoP}, in $P$, $[\mathrm{Fl}(V)]$ is the zero section of $(\pi^{*}\cO_{\PP V^{*}}(-1) \otimes \cO_{\PP \cT_{\PP V^{*}}(-1)}(1))^{10}$. This implies the result.
\end{proof}

Now we are ready to apply the blow-up formula.
\begin{proposition}
\[
\begin{split}
	\rA^{*}(\rM^{+}) =\;& \QQ[\tau, \rho, b_{1}, b_{2}, d_{2}]/
	\langle b_1^2 d_2-3 d_2^2,b_1^2 b_2-b_2 d_2-3 d_2^2,
	b_1^4+3b_2^2-9b_2 d_2-3d_2^2,2b_1 b_2 d_2-3 b_1 d_2^2,\\
	& 3b_1 b_2^2-7b_1 d_2^2,
	\rho^{12} + 3\rho^{11}b_{1}
	+ 3\rho^{10}(b_{1}^{2} + 2b_{2} - d_{2})
	+\rho^{9}(-b_{1}^{3} + 12b_{1}b_{2} + 2b_{1}d_{2})\\
	&+ 3\rho^{8}(9b_{2}^{2} - 16b_{2}d_{2} + 17d_{2}^{2})
	+ 28\rho^{7}b_{1}d_{2}^{2}+56\rho^{6}d_{2}^{3},\\
	& \tau(b_{2}-d_{2}), \tau(b_{1}^{2}-3d_{2}), 
	\tau(d_{2}-b_{1}\rho+3\rho^{2}), 
	\tau b_{1}b_{2}, \tau d_{2}^{2},\\
	& \tau^{14} + (10\rho + \frac{4}{3}b_{1})\tau^{13}
	+ (25\rho b_{1} - 13b_{2})\tau^{12} + 40 \rho b_{2}\tau^{11}+
	(\rho+\frac{1}{3}b_{1})^{10}(-3b_{1}^{2}b_{2}+5b_{1}^{2}d_{2})
	\rangle
\end{split}
\]
\end{proposition}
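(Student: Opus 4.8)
The plan is to apply Keel's blow-up formula (\cite[Theorem 1 in Appendix]{Kee92}) to the smooth blow-up $q : \rM^{+} \to \rQ$ established in Proposition \ref{prop:mainprop}, whose center is $\mathrm{Fl}(V) \subset \rQ$ of codimension $14 = \dim \rQ - \dim \mathrm{Fl}(V)$. Keel's theorem applies because the restriction $i^{*} : \rA^{*}(\rQ) \to \rA^{*}(\mathrm{Fl}(V))$ is surjective: by Lemma \ref{lem:restrictiontoFlV}, $\xi = i^{*}(\rho)$ and $h = -\tfrac{1}{3}i^{*}(b_{1})$, and these generate $\rA^{*}(\mathrm{Fl}(V)) = \QQ[h,\xi]/\langle h^{3}, h^{2}+h\xi+\xi^{2}\rangle$. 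The formula then presents $\rA^{*}(\rM^{+})$ as $\rA^{*}(\rQ)[\tau]$, with $\tau \in \rA^{1}(\rM^{+})$ the class of the exceptional divisor, modulo three families of relations: the relations already defining $\rA^{*}(\rQ)$; the products $\tau\cdot\theta$ for $\theta$ ranging over a generating set of $\ker i^{*}$; and a single degree-$14$ relation assembled from the Chern classes of $\cN_{\mathrm{Fl}(V)/\rQ}$ and the class $[\mathrm{Fl}(V)]$.

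First I would pin down $\ker i^{*}$. Using the restriction values $i^{*}(b_{1}) = -3h$, $i^{*}(b_{2}) = 3h^{2}$, $i^{*}(d_{2}) = 3h^{2}$, $i^{*}(\rho) = \xi$ of Lemma \ref{lem:restrictiontoFlV}, one checks directly that the five classes $b_{2}-d_{2}$, $b_{1}^{2}-3d_{2}$, $d_{2}-b_{1}\rho+3\rho^{2}$, $b_{1}b_{2}$, $d_{2}^{2}$ all map to $0$: for instance $d_{2}-b_{1}\rho+3\rho^{2}\mapsto 3h^{2}+3h\xi+3\xi^{2}=3(h^{2}+h\xi+\xi^{2})=0$, while $b_{1}b_{2}\mapsto -9h^{3}=0$ and $d_{2}^{2}\mapsto 9h^{4}=0$. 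These produce the five relations $\tau(b_{2}-d_{2}),\dots,\tau d_{2}^{2}$ in the statement. To confirm that they actually \emph{generate} $\ker i^{*}$, rather than merely lie in it, I would compare the graded dimensions of $\rA^{*}(\rQ)/\langle b_{2}-d_{2},\dots,d_{2}^{2}\rangle$ with those of the image $\rA^{*}(\mathrm{Fl}(V))$, whose Poincar\'e polynomial is $1+2t+2t^{2}+t^{3}$; this bookkeeping is most safely carried out in Macaulay2 (\cite{M2}).

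Finally I would assemble the degree-$14$ relation $P(\tau) = \tau^{14} + \tilde c_{1}\tau^{13} + \cdots + \tilde c_{13}\tau + [\mathrm{Fl}(V)]$, where Keel's formula requires $\tilde c_{k} \in \rA^{*}(\rQ)$ to be any lift of $c_{k}(\cN_{\mathrm{Fl}(V)/\rQ})$. Since $\mathrm{Fl}(V)$ is a threefold, $c_{k}(\cN)=0$ for $k\ge 4$, so I may take $\tilde c_{k}=0$ for $4\le k\le 13$; reading off Corollary \ref{cor:normalbundleFlV}, the surviving lifts are $\tilde c_{1}=10\rho+\tfrac{4}{3}b_{1}$, $\tilde c_{2}=25\rho b_{1}-13b_{2}$, and $\tilde c_{3}=40\rho b_{2}$, each checked to restrict to the correct $c_{k}(\cN)$ under $i^{*}$. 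The ambiguity in all these lifts changes $P(\tau)$ only by elements of $\tau\cdot\ker i^{*}$, hence is absorbed by the second family of relations; the one coefficient that must be fixed exactly is the $\tau$-free term, which by the self-intersection computation underlying Keel's theorem equals the class $[\mathrm{Fl}(V)]=(\rho+\tfrac{1}{3}b_{1})^{10}(-3b_{1}^{2}b_{2}+5b_{1}^{2}d_{2})$ computed above. Collecting the three families yields the stated presentation. The main obstacle is exactly this last bookkeeping: certifying minimal generation of $\ker i^{*}$ and fixing the precise sign and shape of the constant term $[\mathrm{Fl}(V)]$ in Keel's relation, for which I would lean on the Macaulay2 verification referenced in the introduction.
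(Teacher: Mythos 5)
Your proposal is correct and follows essentially the same route as the paper: both apply Keel's blow-up formula to $q:\rM^{+}\to\rQ$, verify via Lemma \ref{lem:restrictiontoFlV} that the five listed classes lie in $\ker i^{*}$, certify that they generate it by comparing the Poincar\'e polynomial of the quotient with $P_{t}(\mathrm{Fl}(V))=1+2t+2t^{2}+t^{3}$, and assemble the degree-$14$ relation from Corollary \ref{cor:normalbundleFlV} and the class $[\mathrm{Fl}(V)]$. Your explicit verification of the lifts $\tilde c_{k}$ and the remark that the lifting ambiguity is absorbed into $\tau\cdot\ker i^{*}$ are slightly more detailed than the paper's write-up but substantively identical.
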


\begin{proof}
By the blow-up formula (\cite[Theorem 1 in Appendix]{Kee92}),
\[
	\rA^{*}(\rM^{+}) =\rA^{*}(\rQ)[\tau]/\langle
	\tau \ker i^{*}, \tau^{14} + c_{1}(\cN)\tau^{13} +c_{2}(\cN)\tau^{12} +
	c_{3}(\cN)\tau^{11}+ [\mathrm{Fl}(V)]
	\rangle
\]
where $\cN = \cN_{\mathrm{Fl}(V)/\rQ}$. 

It is straightforward to check that $J := \langle b_{2}-d_{2},b_{1}^{2}-3d_{2},d_{2}-b_{1}\rho+3\rho^{2},b_{1}b_{2},d_{2}^{2}\rangle \subset \ker i^{*}$. Furthermore, by Lemma \ref{lem:restrictiontoFlV}, $i^{*} : \rA^{*}(\rQ) \to \rA^{*}(\mathrm{Fl}(V))$ is surjective, thus $\rA^{*}(\rQ)/\ker i^{*} \cong \rA^{*}(\mathrm{Fl}(V))$ and their Hilbert polynomials are same. We can show that $P_{t}((\rA^{*}(\rQ)/J) = 1 + 2t + 2t^{2} + t^{3} = P_{t}(\mathrm{Fl}(V))$. Thus $\ker i^{*} = J$. \end{proof}

\begin{remark}
Instead of using $\rM^{+} \to \rQ \to \rN$, by applying the blow-up/down computation to the flip $\rM^\infty \dashrightarrow \rM^+$, one may obtain the Chow ring of $\rM^+$ from that of $\rM^\infty$. But this computation seems to be more complicated than that in this paper.
\end{remark}

\section{Chow ring of $\rM$}\label{sec:ChowM}

Recall that the exceptional divisor $E$ of the blow-up $q : \rM^{+} \to \rQ$ has two fibration structures:
\[
	\xymatrix{&E=\mathrm{Fl}(V) \times_{\PP V}\cC_{4}
	\ar[ld]_{t} \ar[rd]^{s}\\
	\PP V^{*}\times \PP V \supset \mathrm{Fl}(V) \ar[rd]^{u}&&
	\cC_{4} \subset \PP V \times |\cO_{\PP^{2}}(4)| \ar[ld]_{v}\\
	&\PP V.}
\]
By contracting $E$ to $\cC_{4}$, we obtain a blow-down map $r : \rM^{+} \to \rM$. In this section, by using this contraction, we compute the Chow ring of $\rM$. 

Consider the following Cartesian diagram:
\begin{equation}\label{eqn:blowupdiagramforM}
	\xymatrix{E \ar[r]^{i} \ar[d]_{r'} & \rM^{+} \ar[d]^{r}\\ 
	\cC_{4} \ar[r]^{j} & \rM}
\end{equation}
We have a morphism of Chow groups $r_{*} : \rA_{*}(\rM^{+}) \to \rA_{*}(\rM)$ and a morphism of Chow rings (so called the refined Gysin map) $r^{*}: \rA^{*}(\rM) \to \rA^{*}(\rM^{+})$ such that $r_{*}r^{*} = \mathrm{id}$ (\cite[Proposition 6.7.(b)]{Ful98}). In particular, $r^{*}$ is an injective morphism and $\rA^{*}(\rM)$ can be regarded as a subring of $\rA^{*}(\rM^{+})$. To find the image $r^{*}$, we will apply the following result. 

\begin{proposition}\cite[Lemma 10.5]{AV90}\label{prop:pullbackformula}
Let $f : \widetilde{X} \to X$ be the blow-up of $n$-dimensional smooth projective variety $X$ along a smooth subvariety $Y$. Let $j : \widetilde{Y} \to \widetilde{X}$ be the exceptional divisor and $g : \widetilde{Y} \to Y$ be the projection. Assume that the numerical equivalence is equivalent to the rational equivalence. Then
\[
	f^{*}(\rA_{i}(X)) = (j_{*}(\ker g_{*})_{n-i})^{\perp}.
\]
\end{proposition}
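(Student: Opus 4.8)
The plan is to prove the two inclusions separately, using the intersection pairing together with the standard structure of the Chow groups of a blow-up. Throughout I write $\iota : Y \to X$ for the inclusion, so that $f \circ j = \iota \circ g$ as maps $\widetilde{Y} \to X$, and I record at the outset the one place the hypothesis is used: because numerical equivalence coincides with rational equivalence (with $\QQ$-coefficients), the intersection pairing
\[
	\rA_{i}(\widetilde{X}) \times \rA_{n-i}(\widetilde{X}) \to \QQ,
	\qquad (\gamma, \delta) \mapsto \deg(\gamma \cdot \delta)
\]
is \emph{perfect} (non-degenerate) for every $i$. I also take from the structure of the blow-up along the regular embedding $Y \subset X$ (\cite[Proposition 6.7]{Ful98}) the splitting $\rA_{*}(\widetilde{X}) = f^{*}\rA_{*}(X) \oplus \ker f_{*}$, which follows formally from $f_{*}f^{*} = \mathrm{id}$, together with the identification $\ker f_{*} = j_{*}(\ker g_{*})$.

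First I would establish the easy inclusion $f^{*}(\rA_{i}(X)) \subseteq (j_{*}(\ker g_{*})_{n-i})^{\perp}$. Fix $\alpha \in \rA_{i}(X)$ and $z \in (\ker g_{*})_{n-i}$. Applying the projection formula for the closed immersion $j$, the functoriality identity $j^{*}f^{*} = (fj)^{*} = (\iota g)^{*} = g^{*}\iota^{*}$, and then the projection formula for $g$, one computes
\[
	\deg\bigl(f^{*}\alpha \cdot j_{*}z\bigr)
	= \deg\bigl(j_{*}(g^{*}\iota^{*}\alpha \cdot z)\bigr)
	= \deg\bigl(\iota^{*}\alpha \cdot g_{*}z\bigr) = 0,
\]
the last equality holding because $z \in \ker g_{*}$. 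Thus $f^{*}\rA_{i}(X)$ pairs trivially with $j_{*}(\ker g_{*})_{n-i}$. Replacing $i$ by $n-i$ and using $\ker f_{*} = j_{*}(\ker g_{*})$, the same computation also yields the companion orthogonality $(\ker f_{*})_{i} \perp f^{*}\rA_{n-i}(X)$, which I will need below.

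For the reverse inclusion I would argue as follows. Take $\beta \in (j_{*}(\ker g_{*})_{n-i})^{\perp} = ((\ker f_{*})_{n-i})^{\perp} \subseteq \rA_{i}(\widetilde{X})$ and write $\beta = f^{*}\alpha + \gamma$ with $\gamma \in (\ker f_{*})_{i}$, using the splitting above. Since $\beta$ and (by Step~1) $f^{*}\alpha$ are both orthogonal to $(\ker f_{*})_{n-i}$, so is $\gamma = \beta - f^{*}\alpha$. On the other hand, the companion orthogonality gives $\gamma \perp f^{*}\rA_{n-i}(X)$. Hence $\gamma$ is orthogonal to the entire space $f^{*}\rA_{n-i}(X) \oplus (\ker f_{*})_{n-i} = \rA_{n-i}(\widetilde{X})$, and perfectness of the pairing forces $\gamma = 0$. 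Therefore $\beta = f^{*}\alpha \in f^{*}\rA_{i}(X)$, completing the proof.

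The routine ingredient is Step~1, a bookkeeping exercise with the projection formula. The step I expect to require the most care is the transition to the reverse inclusion: one must invoke the blow-up structure theorem to know that $\ker f_{*}$ is \emph{exactly} $j_{*}(\ker g_{*})$ — so that the orthogonality established in Step~1 genuinely decouples the $f^{*}$-part from the exceptional part — and then the non-degeneracy of the pairing is used precisely to promote the single inclusion into an equality, with no separate rank count. This is also the only point where the hypothesis that numerical and rational equivalence agree is indispensable: without a perfect pairing, a numerically trivial but rationally nontrivial $\gamma$ could survive, and the right-hand side could be strictly larger than $f^{*}\rA_{i}(X)$.
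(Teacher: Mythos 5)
Your proof is correct. Note that the paper does not prove this statement at all --- it is quoted verbatim from \cite[Lemma 10.5]{AV90} --- so there is no in-paper argument to compare against; what you have written is a sound, self-contained proof and is essentially the standard one. The two pillars you lean on are both legitimate: the splitting $\rA_*(\widetilde{X}) = f^{*}\rA_*(X) \oplus \ker f_{*}$ is formal from $f_{*}f^{*} = \mathrm{id}$, and the identification $\ker f_{*} = j_{*}(\ker g_{*})$ follows from the exact sequence of \cite[Proposition 6.7(e)]{Ful98} together with the key formula $f^{*}\iota_{*} = j_{*}(c_{d-1}(E)\cdot g^{*}(-))$ (one writes $\beta = f^{*}\alpha + j_{*}w$, uses $f_{*}\beta = 0$ to solve for $\alpha$, and checks the resulting class lies in $j_{*}(\ker g_{*})$); it would be worth one sentence making that derivation explicit rather than attributing the identification to Fulton as stated. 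You also correctly isolate the single use of the hypothesis: non-degeneracy of the intersection pairing on $\rA^{*}(\widetilde{X})_{\QQ}$ is exactly what kills the residual class $\gamma$, and without it only the inclusion $f^{*}(\rA_{i}(X)) \subseteq (j_{*}(\ker g_{*})_{n-i})^{\perp}$ survives.
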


Besides, from the blow-up description of $\rM$, the Poincar\'e polynomial of $\rM$ can be obtained. 
\begin{proposition}[\protect{\cite[Corollary 5.2]{CC12}}]\label{prop:ppolyM}
\begin{multline*}
P_{t}(\rM) = 
1+2t+6t^2+10t^3+14t^4+15t^{5}+16t^{6}+16t^{7}+16t^{8} \\
+16t^{9}+16t^{10}+16t^{11}+15t^{12}+14t^{13}+10t^{14}+6t^{15}+2t^{16}+t^{17}.
\end{multline*}
\end{proposition}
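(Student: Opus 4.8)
The plan is to read off $P_t(\rM)$ by working through the chain of spaces $\rN$, $\rQ$, $\rM^+$, $\rM$ in diagram \eqref{eqn:diagramSec3}, applying at each stage the standard projective-bundle and blow-up formulas, which act additively on the ranks $\rank \rA_i$. By Proposition \ref{prop:ppolyN}(1) the Chow and cohomology rings of $\rN$ agree, and the projective-bundle and smooth-blow-up constructions in Sections \ref{sec:relevantmoduli}--\ref{sec:masterspace} propagate this agreement (and the concentration in even degrees) all the way up to $\rM$, so no odd-degree terms ever interfere.

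First I would take $P_t(\rN) = 1 + t + 3t^2 + 3t^3 + 3t^4 + t^5 + t^6$ from Proposition \ref{prop:ppolyN}. Since $\rQ = \PP(\cU)$ is the projectivization of the rank $12$ bundle $\cU$ over $\rN$, the projective-bundle formula (\cite[Example 8.3.4]{Ful98}) gives
\[
	P_t(\rQ) = P_t(\rN)\,(1 + t + \cdots + t^{11}).
\]
Next, by Proposition \ref{prop:mainprop} the map $q : \rM^+ \to \rQ$ is the smooth blow-up of $\rQ$ along $\mathrm{Fl}(V)$, whose exceptional divisor is a $\PP^{13}$-bundle over $\mathrm{Fl}(V)$; hence $\mathrm{Fl}(V)$ has codimension $14$ in $\rQ$, and
\[
	P_t(\rM^+) = P_t(\rQ) + P_t(\mathrm{Fl}(V))\,(t + t^2 + \cdots + t^{13}),
\]
where $P_t(\mathrm{Fl}(V)) = 1 + 2t + 2t^2 + t^3 = \sum_{w \in S_3} t^{\ell(w)}$ is the Poincar\'e polynomial of the complete flag variety of $V \cong \CC^3$. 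Finally, $r : \rM^+ \to \rM$ is the smooth blow-up of $\rM$ along $\cC_4$ with a $\PP^1$-bundle exceptional divisor, so $\cC_4$ has codimension $2$; reading the same formula in reverse gives
\[
	P_t(\rM) = P_t(\rM^+) - t\,P_t(\cC_4),
\]
with $\cC_4$ the $\PP^{13}$-bundle over $\PP V$ of Section \ref{ssec:univcurve}, so that $P_t(\cC_4) = (1 + t + t^2)(1 + t + \cdots + t^{13})$. Substituting the four inputs and expanding the polynomial identity yields the claimed value of $P_t(\rM)$.

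The arithmetic is routine, so the content lies in securing the three structural inputs and, above all, their numerics. The subtle point is that the two exceptional divisors both called $E$ in the diagram are a single divisor $E = \mathrm{Fl}(V) \times_{\PP V} \cC_4$, seen through $q$ as a $\PP^{13}$-bundle over $\mathrm{Fl}(V)$ and through $r$ as a $\PP^1$-bundle over $\cC_4$. Pinning down these two fibration structures is exactly what fixes the codimensions $14$ and $2$ entering the two blow-up formulas; this is the one place where a misidentified fiber dimension would silently falsify the final polynomial, so I expect verifying these two bundle structures (already carried out in Section \ref{sec:masterspace}) to be the main obstacle rather than the bookkeeping.
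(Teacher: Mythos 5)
Your computation is correct: chaining $P_t(\rN)=1+t+3t^2+3t^3+3t^4+t^5+t^6$ through the rank-$12$ projective bundle, the codimension-$14$ blow-up along $\mathrm{Fl}(V)$, and the codimension-$2$ blow-down to $\cC_4$ does reproduce the stated polynomial (and the value at $t=1$ is $13\cdot 12+6\cdot 13-42=192=\chi_{\mathrm{top}}(\rM)$, consistent with Remark \ref{rem:pointclass}). The paper itself offers no proof here --- it simply cites \cite[Corollary 5.2]{CC12}, where the result is obtained from the other side of the diagram, namely from $\rM^{\infty}$ (a $\PP^{11}$-bundle over $\rH(3)$) via the wall-crossing $\rM^{\infty}\dashrightarrow\rM^{+}$ and the contraction $r$; this route needs only the pair wall-crossing of \cite{CC12} and not the identification of $\rQ$ as the second model. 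Your route instead passes through $\rN\to\rQ\to\rM^{+}\to\rM$ and therefore depends on Theorem \ref{thm:mainprop} (that $q$ is a smooth blow-up along $\mathrm{Fl}(V)$ with $\PP^{13}$-bundle exceptional divisor), which is the new technical content of this paper; the payoff is that it is exactly the chain the paper later uses to compute the full ring structure in Sections \ref{sec:ChowofN}--\ref{sec:ChowM}, so your argument doubles as a numerical sanity check on Theorem \ref{thm:ChowM}. Your identification of the two fibration structures on the single exceptional divisor $E=\mathrm{Fl}(V)\times_{\PP V}\cC_4$ (a $\PP^{13}$-bundle over $\mathrm{Fl}(V)$ for $q$, a $\PP^{1}$-bundle over $\cC_4$ for $r$) matches the diagram in the proof of Proposition \ref{prop:mainprop}, so the codimensions $14$ and $2$ are correctly pinned down.
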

Thus it is sufficient to find enough generators in the image, which generate a subring of given Poincar\'e polynomial. 

\begin{lemma}\label{lem:ChowE}
\begin{enumerate}
\item The Chow ring $\rA^{*}(E)$ is isomorphic to $\QQ[k, h, \eta]/\langle {k}^{3}, \eta^{14} - 4k\eta^{13} + 16k^{2}\eta^{12}, h^{2} - kh + k^{2}\rangle$ where $k$ (resp. $h$) is the pull-back of the hyperplane class in $\PP V$ (resp. $\PP V^{*}$), and $\eta$ is the pull-back of $c_{1}(\cO_{\PP(\cE)}(1))$ (for the notation, see Section \ref{ssec:univcurve}). 
\item The kernel of $r'_{*} : \rA_{*}(E) \to \rA_{*}(\cC_{4})$ is isomorphic to $\QQ[k, \eta]/\langle {k}^{3},\eta^{14} - 4k\eta^{13} + 16k^{2}\eta^{2}\rangle$.
\end{enumerate}
\end{lemma}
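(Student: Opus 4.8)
The plan is to use the two fibration structures on the fiber product $E=\mathrm{Fl}(V)\times_{\PP V}\cC_4$ displayed above, reducing both assertions to the projective bundle formula together with the Chow rings of $\cC_1=\mathrm{Fl}(V)$ and $\cC_4$ recorded in Section \ref{ssec:univcurve}.

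For part (1), the starting point is that both $\mathrm{Fl}(V)=\cC_1$ and $\cC_4$ are projective bundles over $\PP V$, say $\PP(\cE')$ and $\PP(\cE)$ for the rank-$2$ ($d=1$) and rank-$14$ ($d=4$) bundles of \eqref{eqn:Chowuniversalcurve}. Hence $E$ is itself a projective bundle over $\mathrm{Fl}(V)$, namely the pullback $\PP(u^*\cE)$ of $\cC_4\to\PP V$ along $u:\mathrm{Fl}(V)\to\PP V$, so that by the projective bundle formula
\[
\rA^*(E)\cong \rA^*(\mathrm{Fl}(V))[\eta]\big/\big\langle\,\eta^{14}-4k\eta^{13}+16k^2\eta^{12}\,\big\rangle.
\]
Since $u^*$ preserves Chern classes and the hyperplane class $k$, the relation is just the pullback of the $d=4$ relation in \eqref{eqn:Chowuniversalcurve}. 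Substituting $\rA^*(\mathrm{Fl}(V))=\QQ[h,k]/\langle k^3,\,h^2-kh+k^2\rangle$ then yields precisely the presentation in (1). The only care needed is in the conventions: confirming that $\eta=c_1(\cO_{\PP(\cE)}(1))$ and $h$ restrict to the asserted hyperplane classes, and that the quadratic relation inherited from $\cC_1$ is $h^2-kh+k^2$ and not its dual.

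For part (2), I would use that $r'=s$ is exactly the $\PP^1$-bundle $E=\PP(v^*\cE')\to\cC_4$ obtained by pulling $\mathrm{Fl}(V)\to\PP V$ back along $v$. By the projective bundle theorem $\rA^*(E)$ is a free $\rA^*(\cC_4)$-module on $\{1,h\}$, so every class is uniquely $\xi=s^*\alpha+(s^*\beta)\,h$ with $\alpha,\beta\in\rA^*(\cC_4)$. The pushforward is governed by $s_*(1)=0$ (a dimension count, since $s_*$ preserves dimension while $\dim E=\dim\cC_4+1$) and $s_*(h)=1$ (the relative hyperplane meets each $\PP^1$-fiber in one reduced point). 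The projection formula then gives $r'_*\xi=\beta$, whence $\ker r'_*=s^*\rA^*(\cC_4)$. As $s^*$ is an injective ring homomorphism, this kernel is isomorphic to $\rA^*(\cC_4)=\QQ[k,\eta]/\langle k^3,\,\eta^{14}-4k\eta^{13}+16k^2\eta^{12}\rangle$; the exponent $\eta^2$ printed in the statement is a typo for $\eta^{12}$.

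I do not expect a genuine obstacle here: once the fiber-product description and the bundles $\cE',\cE$ are in hand, both parts follow formally from the projective bundle formula and the identities $s_*(1)=0$, $s_*(h)=1$. The single delicate point is bookkeeping with the paper's convention $\PP(\cU)=\proj(\sym^\bullet\cU^*)$: I would check once that with this convention $h$ is the relative $\cO(1)$ for $s$ (so that $s_*(h)=1$), and that the inherited relations carry the signs of \eqref{eqn:Chowuniversalcurve}.
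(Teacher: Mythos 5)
Your argument is correct and is essentially the paper's: part (2) is verbatim the paper's projection-formula argument ($r'_*$ kills pullbacks from $\cC_4$ and $r'_*(h\alpha)=\alpha$), and part (1) differs only in that you run the projective bundle formula over $\mathrm{Fl}(V)$, viewing $E=\PP(u^*\cE)$ as a $\PP^{13}$-bundle, whereas the paper runs it over $\cC_4$, viewing $E=\PP(v^*K)$ as a $\PP^1$-bundle for the rank-$2$ kernel $K$ of $\cO_{\PP V}\otimes\rH^0(\cO_{\PP V}(1))\to\cO_{\PP V}(1)$; either leg of the fiber square yields the same presentation. You are also right that the exponent $\eta^{2}$ in part (2) is a typo for $\eta^{12}$, as one sees from \eqref{eqn:Chowuniversalcurve} with $r_4=14$.
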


\begin{proof}
By Section \ref{ssec:univcurve}, $\rA^{*}(\cC_{4}) \cong \QQ[k, \eta]/\langle k^{3}, \eta^{14} - 4k\eta^{13} + 16k^{2}\eta^{2}\rangle$. On $\PP V$, we have a standard exact sequence 
\[
	0 \to K \to \cO_{\PP V}\otimes \rH^{0}(\cO_{\PP V}(1)) \to 
	\cO_{\PP V}(1) \to 0
\]
and $E = \PP v^{*}(K)$. Thus by \cite[Example 8.3.4]{Ful98} again, 
\[
	\rA^{*}(E) \cong \QQ[k, h, \eta]/\langle k^{3},
	\eta^{14} - 4k\eta^{13} + 16k^{2}\eta^{12},
	h^{2} - kh + k^{2}\rangle.
\]
Note that the subring generated by $1, k, \eta$ is in $\ker r'_{*}$, because ${r'}: E \to \cC_{4}$ is a positive dimensional smooth fibration. By projection formula, $r'_{*}(h\alpha) = \alpha$ for $\alpha \in \rA^{*}(\cC_{4})$. Therefore 
\[
	\ker r'_{*}\cong \QQ[k, \eta]/\langle k^{3},
	\eta^{14} - 4k\eta^{13} + 16k^{2}\eta^{2}\rangle.
\]
\end{proof}

\begin{remark}
From another fibration structure $u \circ t : E \to \mathrm{Fl}(V) \to \PP V$, by using Corollary \ref{cor:normalbundleFlV} (note that $\xi = k-h$), we can obtain another presentation
\[
	\rA^{*}(E) \cong \QQ[k, h, \tau]/\langle k^{3}, k^{2}-kh + h^{2},
	\tau^{14} + (10k-14h)\tau^{13} + (-75kh+36h^{2})\tau^{12}
	+ 120kh^{2}\tau^{11}\rangle
\]
where $\tau = c_{1}(\cO_{\PP\cN_{\mathrm{Fl}(V)/\rQ}}(1))$. It is straightforward to check that $\eta$ in Lemma \ref{lem:ChowE} is $\tau + k - h$. 
\end{remark}

Now we are able to compute $\rA^{*}(\rM)$. 
\begin{theorem}\label{thm:ChowM}
\[
\begin{split}
\rA^{*}(\rM) \cong \;& \QQ[\alpha, \beta, x, y, z]/\langle
xz-yz,\beta^2 z-3yz-9z^2,3\alpha^2 z-\alpha\beta z+yz,\beta^2 y-3y^2-9yz,\\
&\beta^2 x-xy-3y^2-3\alpha \beta z-9yz+9z^2, \beta^4+3x^2-9xy-3y^2-54yz-81z^2,\\
&\beta yz+9\alpha z^2-3\beta z^2, 2\beta xy-3\beta y^2-9\alpha yz-27\alpha z^2+9\beta z^2, 3\beta x^2-7\beta y^2-36\alpha yz\\
&-108\alpha z^2+36\beta z^2,
\alpha^{12}+3\alpha^{11}\beta+3\alpha^{10}(\beta^2+2x-y)
+\alpha^9(-\beta^3+12\beta x+2\beta y)\\
&+3\alpha^{8}(9x^{2}-16xy+27y^{2})+28\alpha^7\beta y^2
+56\alpha^6 y^3+201\alpha\beta z^5-19yz^5-613z^6,\\
& 6\alpha^{10}xy-12\alpha^{10}y^2-10\alpha^9 \beta y^2-45\alpha^8 y^3-104\alpha\beta z^6+2yz^6+310z^7\rangle,
\end{split}
\]
where $\alpha, \beta$ are of degree 1 and $x, y, z$ are of degree 2. The inclusion map $r^{*} : \rA^{*}(\rM)\to \rA^{*}(\rM^{+})$ is given by $r^{*}(\alpha) = \rho + \tau$, $r^{*}(\beta) = b_{1} + 3\tau$, $r^{*}(x) = b_{2}-(3\rho-b_{1})\tau$, $r^{*}(y) = d_{2}-(3\rho-b_{1})\tau$, and $r^{*}(z) = \tau(\tau+\rho+b_{1}/3)$. 
\end{theorem}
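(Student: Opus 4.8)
The plan is to realize $\rA^{*}(\rM)$ as a subring of the already-computed ring $\rA^{*}(\rM^{+})$ and then to pin down generators and relations. Since $r : \rM^{+} \to \rM$ is the smooth blow-up of $\rM$ along $\cC_{4}$ with exceptional divisor $E$ (diagram \eqref{eqn:blowupdiagramforM}), the refined Gysin map satisfies $r_{*}r^{*} = \mathrm{id}$, so $r^{*}$ is injective and identifies $\rA^{*}(\rM)$ with the subring $r^{*}(\rA^{*}(\rM)) \subset \rA^{*}(\rM^{+})$. The whole problem thus reduces to (i) describing this image intrinsically inside $\rA^{*}(\rM^{+})$, (ii) exhibiting explicit generators, and (iii) computing the relations among them.

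For step (i) I would invoke Proposition \ref{prop:pullbackformula}: because numerical and rational equivalence coincide on these spaces (Markman's theorem cited in the introduction), the image $r^{*}(\rA_{i}(\rM))$ equals the orthogonal complement $\big(i_{*}(\ker r'_{*})_{17-i}\big)^{\perp}$ with respect to the intersection pairing on the $17$-dimensional variety $\rM^{+}$. Here $\ker r'_{*}$ was already determined in Lemma \ref{lem:ChowE}(2) as the subring of $\rA^{*}(E)$ spanned by the monomials $k^{a}\eta^{b}$; pushing these classes forward along $i$, via the self-intersection formula involving $\cN_{E/\rM^{+}}$ whose Chern data is recorded in Corollary \ref{cor:normalbundleFlV}, yields explicit cycles in $\rA^{*}(\rM^{+})$, and the sought image is exactly what pairs to zero against all of them. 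Since this is a codimension-$2$ blow-up, an equivalent and more convenient reformulation is available: a class of $\rA^{*}(\rM^{+})$ lies in $r^{*}(\rA^{*}(\rM))$ if and only if its restriction $i^{*}$ to $E$ is pulled back from $\cC_{4}$, i.e. lies in the subring generated by $k$ and $\eta$ with no dependence on $h$.

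For step (ii) I would verify that the five classes named in the statement, namely $\rho+\tau$, $b_{1}+3\tau$, $b_{2}-(3\rho-b_{1})\tau$, $d_{2}-(3\rho-b_{1})\tau$, and $\tau(\tau+\rho+\tfrac{1}{3}b_{1})$, satisfy this criterion. Using $i^{*}\rho = \xi$ from Lemma \ref{lem:restrictiontoFlV} together with the identities $\xi = k-h$ and $\eta = \tau + k - h$ from the remark following Lemma \ref{lem:ChowE}, one computes, for instance, $i^{*}(\rho+\tau)=\eta$ and $i^{*}(b_{1}+3\tau)=3(\eta-k)$, both manifestly $h$-free; the remaining three restrictions are checked the same way. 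Defining $\alpha,\beta,x,y,z \in \rA^{*}(\rM)$ as the $r^{*}$-preimages of these five classes then makes the stated formulas for $r^{*}$ true by construction, and realizes the subring they generate inside $r^{*}(\rA^{*}(\rM))$.

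The last and heaviest step (iii) is to compute the kernel $I$ of the ring homomorphism $\QQ[\alpha,\beta,x,y,z] \to \rA^{*}(\rM^{+})$ sending the variables to the five classes above, working inside the explicit presentation of $\rA^{*}(\rM^{+})$ by elimination. One then checks that the Hilbert series of $\QQ[\alpha,\beta,x,y,z]/I$ equals $P_{t}(\rM)$ from Proposition \ref{prop:ppolyM}; since the generated subring is contained in $r^{*}(\rA^{*}(\rM))$, which has this same Hilbert series, matching degree by degree forces the two to coincide, simultaneously establishing that the five classes generate and that $I$ is the full relation ideal. I expect this elimination and the subsequent Gröbner-basis simplification to be the main obstacle: the target ring is large and the naive relation set is unwieldy, so carrying out the computation and reducing it to the displayed presentation is precisely where the Macaulay2 calculation referenced in the introduction is indispensable.
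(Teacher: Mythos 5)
Your proposal is correct and follows essentially the same route as the paper: identify $\rA^{*}(\rM)$ with $r^{*}(\rA^{*}(\rM))\subset\rA^{*}(\rM^{+})$ via Proposition \ref{prop:pullbackformula}, certify the five classes $\rho+\tau$, $b_{1}+3\tau$, $b_{2}-(3\rho-b_{1})\tau$, $d_{2}-(3\rho-b_{1})\tau$, $\tau(\tau+\rho+\tfrac{1}{3}b_{1})$ as lying in the image by restricting to $E$ and using $\ker r'_{*}$ from Lemma \ref{lem:ChowE}, and then conclude by matching the Hilbert series of the subring they generate against $P_{t}(\rM)$ with the relation ideal computed in Macaulay2. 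Your reformulation of the membership criterion (that $i^{*}\gamma$ be $h$-free, i.e.\ pulled back from $\cC_{4}$) is a valid repackaging of the paper's orthogonality check against the generators $k^{2}\eta^{13}$, $k\eta^{13}$, $k^{2}\eta^{12}$ of $\ker r'_{*}$ in the complementary degrees, and changes nothing of substance.
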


\begin{proof}
By Proposition \ref{prop:pullbackformula}, $\gamma \in \rA^{*}(\rM^{+})$ is in $r^{*}(\rA^{*}(\rM))$ if and only if for every $\delta \in \ker s_{*}$, $i_{*}(\delta)\cdot \gamma = 0$. By the projection formula, $i_{*}(\delta \cdot i^{*}(\gamma)) = 0$. Thus if $\delta\cdot i^{*}(\gamma) = 0$, then $\gamma \in r^{*}(\rA^{*}(\rM))$. 

From the description of $\ker r'_{*}$ in Lemma \ref{lem:ChowE}, $(\ker r'_{*})_{2}$ is generated by $k^{2}\eta^{12}$ and $k\eta^{13}$, and $(\ker r'_{*})_{1} = \langle k^{2}\eta^{13}\rangle$. On the other hand, the pull-back $i^{*} : \rA^{*}(\mathrm{M}^{+}) \to \rA^{*}(E)$ is given by $i^{*}(\rho) = \xi = k - h$, $i^{*}(\tau) = \eta - k + h$, $i^{*}(b_{1}) = -3h$, $i^{*}(b_{2}) = 3h^{2}$, and $i^{*}(c_{2}) = 3h^{2}$ by Lemma \ref{lem:restrictiontoFlV}.

For $\rho \in \rA^{*}(\rM^{+})$, $r^{*}r_{*}(\rho) = \rho + n\tau$ for some $n \in \ZZ$. Because $i^{*}(\rho + n \tau) = k - h + n(\eta - k + h) = n\eta - (n-1)(k - h)$. Then $k^{2}\eta^{13}(n\eta - (n-1)(k - h)) = (n-1)hk^{2}\eta^{13} = 0$ only if $n = 1$. So $\rho + \tau \in r^{*}(\rA^{*}(\rM))$. By a similar computation, $b_{1}+3\tau \in r^{*}(\rA^{*}(\rM))$.

Now $r^{*}r_{*}(b_{2}) = b_{2} + a\tau^{2} + b\tau\rho + c\tau b_{1}$ for some $a, b, c \in \QQ$. By intersecting with $k^{2}\eta^{12}$ and $k\eta^{13}$, we can obtain two linear equations:
\begin{eqnarray*}
	3 + 7a - 3b - 12c &=&0,\\
	2a - b - 3c &=&0.
\end{eqnarray*}
By solving this system, we obtain $b_{2} - (3\rho - b_{1})\tau, b_{2}+(3\tau+2b_{1})\tau \in r^{*}(\rA(\rM))$. Then $(b_{2} + (3\rho - b_{1})\tau-b_{2}+(3\tau+2b_{1})\tau )/3= \tau(\tau+\rho+b_{1}/3) \in r^{*}(\rA^{*}(\rM))$, too. By a similar computation, $d_{2} - (3\rho-b_{1})\tau \in r^{*}(\rA^{*}(\rM))$. 

In summary, a subring $R$ generated by $\alpha:= \rho + \tau, \beta:= b_{1} + 3\tau, x:= b_{2}-(3\rho-b_{1})\tau, y:=d_{2} - (3\rho-b_{1})\tau, z:=\tau(\tau + \rho+b_{1}/3)$ is a subring of $r^{*}(\rA^{*}(\rM))$. By using a computer algebra system, one can compute the presentation of $R$ on the statement, and check the fact that Hilbert series of $R$ and $\rA^{*}(\rM)$ are same. Therefore $R = r^{*}(\rA^{*}(\rM)) \cong \rA^{*}(\rM)$. 
\end{proof}

\section{Chern classes and effective cycles}\label{sec:Cherneff}

The aim of this section is twofold. First of all, by computing Chern classes and some effective classes, we want to obtain geometric description of the generators of $\rA^{*}(\rM)$. Secondly, we will use Chern classes of $\rM$ to compute Euler characteristics of line bundles on $\rM$ in the next section. This result can be used to provide some numerical data on the strange duality conjecture of Le Potier. 

\subsection{Chern classes}

By \cite[Equation (3.4)]{Tjo98}, there is an exact sequence 
\[
	0 \to \cO_{\rN} \to \cEnd(\cE) \oplus \cEnd(\cF) \to 
	\cHom(\cF, H \otimes \cE) \to \cT_{\rN} \to 0,
\]
where $\cE$ and $\cF$ are two universal bundles on $\rN$. Thus we can compute the total Chern class of the tangent bundle of $\rN$, which is:
\begin{equation}\label{eqn:ChernclassN}
	c(\cT_{\rN}) = 
	1-3b_1+3b_1^2+5d_2-3b_1^3-4b_1d_2-9b_2^2+27b_2 d_2+4d_2^2
	-\frac{17}{2}b_1d_2^2+\frac{13}{2}d_2^3.
\end{equation}

On $\rQ \cong \PP \cU$, there is an exact sequence (\cite[Example 3.2.11]{Ful98})
\[
	0 \to \cO_{\rQ} \to p^{*}\cU \otimes \cO_{\rQ}(1) 
	\to \cT_{\rQ} \to p^{*}\cT_{\rN} \to 0.
\]
Apply \cite[Example 3.2.2]{Ful98} and \eqref{eqn:ChernclassU}, then we have:
\[
	c(\cT_{\rQ}) = c(p^{*}\cU \otimes \cO_{\rQ}(1))
	c(p^{*}\cT_{\rN}) = 
	\left(\sum_{i\ge 0}\sum_{j=0}^{i}{12-j \choose i-j}
	c_{j}(\cU)\rho^{i-j}\right)p^{*}c(\cT_{\rN}).
\]
For instance, 
\[
	c_{1}(\cT_{\rQ}) = 12\rho, \quad 
	c_{2}(\cT_{\rQ}) = 66\rho^{2} -3\rho b_{1} 
	- 3b_{1}^{2}+6b_{2}+2d_{2}.
\]
\begin{theorem}[\protect{\cite[Theorem 15.4, Example 15.4.1]{Ful98}}]\label{thm:blowupChernclass}
Let $f : \widetilde{Y} \to Y$ be the blow-up of a nonsingular variety $Y$ along a codimension $d$ nonsingular subvariety $X$. Let $j : E \to \widetilde{Y}$ be the inclusion of the exceptional divisor, $g : E \to X$ be the projection, $N = \cN_{X/Y}$, and $\zeta = c_{1}(\cO_{E}(1))$. Then 
\[
	c(\widetilde{Y}) - f^{*}c(Y) = j_{*}(g^{*}c(X) \cdot \alpha)
\]
where 
\[
	\alpha = \sum_{j=0}^{d}\sum_{k=0}^{d-j}
	\left({d-j \choose k}-{d-j \choose k+1}\right)
	\zeta^{k}g^{*}c_{j}(N).
\]
\end{theorem}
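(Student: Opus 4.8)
The plan is to derive the formula from a single fundamental exact sequence comparing $\cT_{\widetilde Y}$ with $f^{*}\cT_{Y}$, combined with the standard rule for the total Chern class of a sheaf pushed forward from a smooth divisor, and then to reduce the whole computation to a combinatorial identity by means of the projection formula.

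First I would establish the key sequence. Since $f$ is an isomorphism off $E$, the differential $df\colon \cT_{\widetilde Y}\to f^{*}\cT_{Y}$ is generically an isomorphism; a computation in the standard blow-up charts (where $Y=\mathrm{Spec}\,\CC[x_{1},\dots,x_{n}]$, $X=\{x_{1}=\cdots=x_{d}=0\}$, and $\widetilde Y$ has coordinates $u,v_{2},\dots,v_{d},x_{d+1},\dots,x_{n}$ with $f(u,v,x)=(u,uv_{2},\dots,uv_{d},x)$) shows that $df$ is injective with $\det(df)=u^{d-1}$ up to a unit. Reading off the cokernel identifies it with $j_{*}Q$, where $Q=g^{*}N/\cO_{E}(-1)$ is the tautological rank $(d-1)$ quotient bundle on $E=\PP(N)$; here $\cO_{E}(-1)\hookrightarrow g^{*}N$ is the tautological subbundle and $\cN_{E/\widetilde Y}=\cO_{E}(-1)$. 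Thus one obtains
\[
0\to \cT_{\widetilde Y}\xrightarrow{df} f^{*}\cT_{Y}\to j_{*}Q\to 0.
\]

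Because $\widetilde Y$ is smooth, the total Chern class is multiplicative in short exact sequences, so $c(\widetilde Y)=f^{*}c(Y)/c(j_{*}Q)$ and hence $c(\widetilde Y)-f^{*}c(Y)=f^{*}c(Y)\bigl(c(j_{*}Q)^{-1}-1\bigr)$. The next step is to compute $c(j_{*}Q)$. For a line bundle $L$ on $E$ the resolution $0\to \widetilde L(-E)\to \widetilde L\to j_{*}L\to 0$ (with $\widetilde L$ restricting to $L$) gives $c(j_{*}L)=(1+\ell)/(1+\ell-e)$, where $e=[E]$ and $\ell$ restricts to $c_{1}(L)$; the splitting principle and exactness of $j_{*}$ then yield $c(j_{*}Q)=\prod_{i}(1+\ell_{i})/(1+\ell_{i}-e)$, the $a_{i}=j^{*}\ell_{i}$ being the Chern roots of $Q$. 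Expanding $c(j_{*}Q)^{-1}-1$ and using $e=j_{*}1$ together with $j^{*}e=c_{1}(\cN_{E/\widetilde Y})=-\zeta$ rewrites it as $j_{*}$ of an explicit class on $E$ built from $\zeta$ and the Chern roots of $Q$.

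It remains to feed this through the projection formula. Since $f\circ j=j_{X}\circ g$, where $j_{X}\colon X\hookrightarrow Y$, we have $j^{*}f^{*}c(Y)=g^{*}j_{X}^{*}c(\cT_{Y})=g^{*}\bigl(c(X)\,c(N)\bigr)$ by the normal-bundle sequence $0\to \cT_{X}\to \cT_{Y}|_{X}\to N\to 0$. The factor $g^{*}c(N)$ exactly cancels the denominator coming from $c(Q)=g^{*}c(N)/(1-\zeta)$, so that $c(X)$ splits off as the front factor $g^{*}c(X)$ and the difference takes the form $j_{*}\bigl(g^{*}c(X)\cdot\alpha\bigr)$ for a class $\alpha$ depending only on $\zeta$ and the $g^{*}c_{j}(N)$. \textbf{The main obstacle} is the final combinatorial identification of $\alpha$: after substituting $c_{l}(Q)=\sum_{p}\zeta^{l-p}g^{*}c_{p}(N)$, the inner sums are of geometric type $\sum_{s}(1+\zeta)^{\,d-j-s}\zeta^{s}$, which collapse (using $(1+\zeta)-\zeta=1$) into the binomial differences $\binom{d-j}{k}-\binom{d-j}{k+1}$. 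One subtlety must be handled with care: the naive computation produces the terms $j=0,\dots,d-1$ together with a leftover multiple of $g^{*}c_{d}(N)$, and invoking the Grothendieck relation $\sum_{j=0}^{d}\zeta^{d-j}g^{*}c_{j}(N)=0$ in $\rA^{*}(\PP(N))$ is exactly what converts this leftover into the missing $j=d$ summand, completing the range to $0\le j\le d$ and matching the stated $\alpha$.
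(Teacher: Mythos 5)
This statement is quoted verbatim from Fulton and the paper supplies no proof of its own, so the only meaningful comparison is with the source's argument; your proposal is essentially a reconstruction of it. The exact sequence $0\to \cT_{\widetilde{Y}}\to f^{*}\cT_{Y}\to j_{*}Q\to 0$ with $Q=g^{*}N/\cO_{E}(-1)$ is precisely Fulton's Lemma in Section 15.4, and the reduction via the projection formula, the relation $c(Q)=g^{*}c(N)\cdot\sum_{i}\zeta^{i}$, and the Grothendieck relation $\sum_{j}\zeta^{d-j}g^{*}c_{j}(N)=0$ to produce the binomial coefficients is exactly how Example 15.4.1 is extracted from Theorem 15.4. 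The one step where your write-up is weaker than the source is the evaluation of $c(j_{*}Q)$: the resolution $0\to\widetilde{L}(-E)\to\widetilde{L}\to j_{*}L\to 0$ presupposes that $L$ extends to a line bundle on $\widetilde{Y}$, and the Chern roots of $Q$ live on a flag bundle over $E$ that does not sit inside any flag bundle over $\widetilde{Y}$, so "the splitting principle and exactness of $j_{*}$" do not literally apply. The conclusion — that $c(j_{*}Q)-1$ equals $j_{*}$ of a universal polynomial in $\zeta=-c_{1}(\cN_{E/\widetilde{Y}})$ and the Chern classes of $Q$ — is correct, but its rigorous justification is Riemann--Roch without denominators (Fulton, Theorem 15.3), which is exactly the companion result Fulton proves in order to run this argument. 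With that substitution your proof is complete and coincides with the cited one.
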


By using this theorem, we are able to compute $c(\cT_{\rM^{+}})$. For example, 
\[
	c_{1}(\cT_{\rM^{+}}) = 12\rho + 13\tau, \quad
	c_{2}(\cT_{\rM^{+}}) = 77\tau^{2}+146\tau\rho + 66\rho^{2}
	- \frac{4}{3}\tau b_{1} - 3\rho b_{1} - 3b_{1}^{2}+6b_{2}+2d_{2}.
\]

One obstacle to apply this theorem to the blow-down $r : \rM^{+} \to \rM$ is that a priori we do not know Chern classes of normal bundle. But without this information, we can compute the first Chern class 
\[
	r^{*}c_{1}(\cT_{\rM}) = c_{1}(\cT_{\rM^{+}}) -\tau 
	= 12(\tau + \rho).
\]
Since $\cC_{4} \hookrightarrow \rM$ is codimension two, Theorem \ref{thm:blowupChernclass} is specialized to (\cite[Example 15.4.3]{Ful98})
\begin{eqnarray*}
	r^{*}c_{2}(\cT_{\rM}) &=& c_{2}(\cT_{\rM^{+}}) 
	+ i_{*}r^{*}c_{1}(\cT_{\cC_{4}}) + [E]^{2} 
	= c_{2}(\cT_{\cM^{+}}) - r^{*}j_{*}[\cC_{4}] 
	+ r^{*}c_{1}(\cT_{\rM})[E]\\
	&=& 77\tau^{2}+146\tau \rho + 66\rho^{2}-\frac{4}{3}\tau b_{1}
	- 3\rho b_{1} - 3b_{1}^{2}+6b_{2}+2d_{2} - r^{*}[\cC_{4}]
	-\tau 12(\tau + \rho).
\end{eqnarray*}
By Proposition \ref{prop:BNlocus} which we will prove later (Note that in the proof of Proposition \ref{prop:BNlocus}, we use $r^{*}c_{1}(\cT_{\rM})$ only.), $r^{*}[\cC_{4}] = \tau(\tau + \rho + \frac{1}{3}b_{1})$. Therefore we obtain the following result. 
\begin{lemma}\label{lem:twoChernclasses}
\[
\begin{split}
	r^{*}c_{1}(\cT_{\rM}) =&\; 12(\tau + \rho) = 12\alpha,\\
	r^{*}c_{2}(\cT_{\rM}) =&\; 64\tau^{2}+133\tau\rho+66\rho^{2}
	-\frac{5}{3}\tau b_{1}-3\rho b_{1}-3b_{1}^{2}+6b_{2}+2d_{2}\\
	=&\; 66\alpha^{2}-3\alpha \beta - 3\beta^{2}+6x+2y+34z.
\end{split}
\]
\end{lemma}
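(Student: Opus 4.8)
The plan is to apply the blow-up Chern class formula (Theorem \ref{thm:blowupChernclass}) ``in reverse'' to the contraction $r : \rM^{+} \to \rM$, whose center is the codimension-two Brill--Noether locus $\cC_{4}$. The class $c(\cT_{\rM^{+}})$ is already in hand from the two preceding projective-bundle and blow-up computations, so the problem is to recover $r^{*}c(\cT_{\rM})$ from it in the two lowest degrees. The apparent obstacle is circularity: Fulton's formula expresses $c(\cT_{\rM^{+}})$ through $c(\cT_{\rM})$ \emph{and} the Chern classes of the normal bundle $\cN_{\cC_{4}/\rM}$, and the latter is unavailable without already knowing $\cT_{\rM}$. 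The observation that dissolves this difficulty is that in codimension two, and only up to $c_{2}$, every correction term can be rewritten through the codimension, the exceptional class $[E]$, and the single class $r^{*}[\cC_{4}]$, with the genuine normal-bundle data cancelling.

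First I would compute $c_{1}$. For a blow-up along a codimension-two center the degree-one part of the correction $c(\cT_{\rM^{+}}) - r^{*}c(\cT_{\rM})$ in Theorem \ref{thm:blowupChernclass} is simply $(1-2)[E] = -[E]$, with no dependence on $\cN_{\cC_{4}/\rM}$; hence $r^{*}c_{1}(\cT_{\rM}) = c_{1}(\cT_{\rM^{+}}) + [E]$. Comparing $c_{1}(\cT_{\rM^{+}}) = 12\rho + 13\tau$ with $c_{1}(\cT_{\rQ}) = 12\rho$ through the codimension-fourteen blow-up $q : \rM^{+} \to \rQ$ fixes the sign convention $[E] = -\tau$, so $r^{*}c_{1}(\cT_{\rM}) = c_{1}(\cT_{\rM^{+}}) - \tau = 12(\tau + \rho)$, which is $12\alpha$ after substituting the generator $\alpha = \rho + \tau$.

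Next I would treat $c_{2}$ via the codimension-two specialization of the blow-up formula (\cite[Example 15.4.3]{Ful98}):
\[
	r^{*}c_{2}(\cT_{\rM}) = c_{2}(\cT_{\rM^{+}})
	+ i_{*}{r'}^{*}c_{1}(\cT_{\cC_{4}}) + [E]^{2}.
\]
To remove the center-tangent and self-intersection terms I would invoke adjunction and the blow-up push-pull formula. From the normal-bundle sequence, $j^{*}c_{1}(\cT_{\rM}) = c_{1}(\cT_{\cC_{4}}) + c_{1}(\cN_{\cC_{4}/\rM})$, so $i_{*}{r'}^{*}c_{1}(\cT_{\cC_{4}}) = r^{*}c_{1}(\cT_{\rM})\cdot[E] - i_{*}{r'}^{*}c_{1}(\cN_{\cC_{4}/\rM})$; and from $[E]^{2} = -i_{*}\zeta$ (with $\zeta$ the tautological class on $E$) together with the push-pull identity $r^{*}j_{*}[\cC_{4}] = i_{*}({r'}^{*}c_{1}(\cN_{\cC_{4}/\rM}) + \zeta)$, the two residual normal-bundle contributions cancel, leaving $r^{*}c_{2}(\cT_{\rM}) = c_{2}(\cT_{\rM^{+}}) + r^{*}c_{1}(\cT_{\rM})\cdot[E] - r^{*}[\cC_{4}]$. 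This is exactly where Proposition \ref{prop:BNlocus} enters: it supplies $r^{*}[\cC_{4}] = \tau(\tau + \rho + \tfrac{1}{3}b_{1})$, and since its own proof uses only $r^{*}c_{1}(\cT_{\rM})$, which is already known, no circularity is incurred. Substituting this, together with $r^{*}c_{1}(\cT_{\rM}) = 12\alpha$, $[E] = -\tau$, and the explicit $c_{2}(\cT_{\rM^{+}})$, yields the first closed form $64\tau^{2} + 133\tau\rho + 66\rho^{2} - \tfrac{5}{3}\tau b_{1} - 3\rho b_{1} - 3b_{1}^{2} + 6b_{2} + 2d_{2}$.

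Finally, to land on the stated presentation I would substitute the generator definitions $\alpha = \rho + \tau$, $\beta = b_{1} + 3\tau$, $x = b_{2} - (3\rho - b_{1})\tau$, $y = d_{2} - (3\rho - b_{1})\tau$, and $z = \tau(\tau + \rho + \tfrac{1}{3}b_{1})$, and verify the identity $64\tau^{2} + 133\tau\rho + \cdots = 66\alpha^{2} - 3\alpha\beta - 3\beta^{2} + 6x + 2y + 34z$ by a monomial-by-monomial comparison; this is an exact equality of polynomials, so no ring relation is needed, and a one-line Macaulay2 check would confirm it. I expect the main obstacle to be organizational rather than computational: carrying out the rewriting of \cite[Example 15.4.3]{Ful98} so that the unavailable Chern classes of $\cN_{\cC_{4}/\rM}$ are traded for the independently computed class $r^{*}[\cC_{4}]$ is precisely what makes the otherwise circular-looking application of the blow-up formula legitimate.
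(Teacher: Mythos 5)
Your proposal is correct and follows essentially the same route as the paper: compute $r^{*}c_{1}(\cT_{\rM})=c_{1}(\cT_{\rM^{+}})-\tau$ directly from the codimension-two blow-up formula, then use Fulton's Example 15.4.3 rewritten via adjunction and push-pull as $r^{*}c_{2}(\cT_{\rM})=c_{2}(\cT_{\rM^{+}})-r^{*}j_{*}[\cC_{4}]+r^{*}c_{1}(\cT_{\rM})[E]$, substitute $r^{*}[\cC_{4}]=\tau(\tau+\rho+\tfrac{1}{3}b_{1})$ from Proposition \ref{prop:BNlocus} (noting, as the paper does, that its proof uses only $r^{*}c_{1}(\cT_{\rM})$, so there is no circularity), and finish with the polynomial identity in $\alpha,\beta,x,y,z$. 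Your numerics check out ($77-1-12=64$, $146-1-12=133$, $-\tfrac{4}{3}-\tfrac{1}{3}=-\tfrac{5}{3}$), so the only difference from the paper is that you spell out the adjunction/push-pull cancellation that the paper compresses into one line.
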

\begin{lemma}
For $j : \cC_{4} \to \rM$, 
\[
	j^{*}(\rho+\tau) =  \eta, j^{*}(b_{1}+3\tau) = 3(\eta-k), 
	j^{*}(b_{2}-(3\rho-b_{1})\tau) = j^{*}(d_{2}-3(\rho-b_{1})\tau)
	= -3k\eta, 
\]
\[
	j^{*}(\tau(\tau+\rho+\frac{1}{3}b_{1})) = 
	\eta^{2}-k\eta+k^{2}.
\]
In particular, $j^{*}: \rA^{*}(\rM) \to \rA^{*}(\cC_{4})$ is surjective. 
\end{lemma}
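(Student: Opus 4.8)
The plan is to read off $j^{*}$ from the commutative square \eqref{eqn:blowupdiagramforM}, in which $r \circ i = j \circ r'$, together with the explicit description of $r^{*}$ already obtained in Theorem \ref{thm:ChowM}. By functoriality of the pullback on Chow rings of smooth varieties, for every $\gamma \in \rA^{*}(\rM)$ one has $r'^{*}j^{*}(\gamma) = i^{*}r^{*}(\gamma)$. Since $r' : E \to \cC_{4}$ is the $\PP^{1}$-bundle structure of the exceptional divisor (the fibre over $(C,p)$ being the pencil of lines through $p$), the map $r'^{*}$ is injective and identifies $\rA^{*}(\cC_{4}) = \QQ[k,\eta]/\langle k^{3}, \eta^{14}-4k\eta^{13}+16k^{2}\eta^{12}\rangle$ with the subring of
\[
	\rA^{*}(E) = \QQ[k,h,\eta]/\langle k^{3},\, \eta^{14}-4k\eta^{13}+16k^{2}\eta^{12},\, h^{2}-kh+k^{2}\rangle
\]
spanned by the monomials free of $h$; indeed $\rA^{*}(E)$ is free of rank two over $\rA^{*}(\cC_{4})$ with basis $\{1,h\}$. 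Thus it suffices to compute $i^{*}r^{*}(\gamma)$ for each generator $\gamma \in \{\alpha,\beta,x,y,z\}$, verify that no $h$ survives, and read off $j^{*}(\gamma)$ as the resulting polynomial in $k,\eta$.

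First I would record the restriction $i^{*} : \rA^{*}(\rM^{+}) \to \rA^{*}(E)$ on generators, namely $i^{*}(\rho) = k-h$, $i^{*}(\tau) = \eta-k+h$, $i^{*}(b_{1}) = -3h$, and $i^{*}(b_{2}) = i^{*}(d_{2}) = 3h^{2}$, as established in the proof of Theorem \ref{thm:ChowM} through Lemma \ref{lem:restrictiontoFlV}. Substituting the formulas $r^{*}(\alpha)=\rho+\tau$, $r^{*}(\beta)=b_{1}+3\tau$, $r^{*}(x)=b_{2}-(3\rho-b_{1})\tau$, $r^{*}(y)=d_{2}-(3\rho-b_{1})\tau$, $r^{*}(z)=\tau(\tau+\rho+\tfrac{1}{3}b_{1})$ and simplifying, the $h$-terms cancel by means of the conic relation $h^{2}=kh-k^{2}$ coming from $\mathrm{Fl}(V)=\PP\cT_{\PP V^{*}}(-1)$. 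For instance $i^{*}r^{*}(\alpha)=(k-h)+(\eta-k+h)=\eta$, and $i^{*}r^{*}(z)=(\eta-k+h)(\eta-h)=\eta^{2}-k\eta+(kh-h^{2})=\eta^{2}-k\eta+k^{2}$; the generators $x$ and $y$ are handled identically since $i^{*}(b_{2})=i^{*}(d_{2})=3h^{2}$, and $\beta$ is immediate. This yields the five stated values. (I note in passing that the notation $j^{*}(\rho+\tau)$ etc.\ in the statement is shorthand for $j^{*}$ applied to the class of $\rA^{*}(\rM)$ whose $r^{*}$-image is $\rho+\tau$.)

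Finally, surjectivity of $j^{*}$ is immediate: $\rA^{*}(\cC_{4})$ is generated by $k$ and $\eta$, and $j^{*}(\alpha)=\eta$ together with $j^{*}(3\alpha-\beta)=3\eta-3(\eta-k)=3k$ exhibit both generators in the image. I do not expect a genuine geometric obstacle here, since the two inputs (the presentation of $r^{*}$ in Theorem \ref{thm:ChowM} and the restriction $i^{*}$ to $E$) are already in hand; the only delicate point is bookkeeping. The symbols $i$ and the classes $k,h,\eta,\xi$ are reused from the $\rQ$-to-$\mathrm{Fl}(V)$ setting, so one must confirm that on $E$ the identifications $i^{*}(\rho)=\xi=k-h$ and $i^{*}(\tau)=\eta-k+h$ are the correct ones and that $r'^{*}$ genuinely selects the $h$-free part. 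A sign or index slip at this stage would propagate through all five computations, but the essential mechanism is simply the cancellation forced by $h^{2}=kh-k^{2}$.
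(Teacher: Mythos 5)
Your proposal is correct and follows essentially the same route as the paper, which also reduces the computation to the restriction $i^{*}$ on $E$ (via Lemma \ref{lem:restrictiontoFlV} and the identity $r'^{*}j^{*}=i^{*}r^{*}$ on the Cartesian square) and then simplifies using the relation $h^{2}-kh+k^{2}=0$ in $\rA^{*}(E)$. Your write-up merely makes explicit the injectivity of $r'^{*}$ and the identification of $\rA^{*}(\cC_{4})$ with the $h$-free part, which the paper leaves implicit.
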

\begin{proof}
It is a straightforward computation using Lemma \ref{lem:restrictiontoFlV} and some relations in $\rA^{*}(E)$, including $k = h+\xi$, $\eta = \tau+k-h$, and $k^{2}-k\xi+\xi^{2} = h^{2}+h\xi+\xi^{2}=0$. 
\end{proof}
The following lemma is an immediate consequence of \eqref{eqn:twoChernclassesCd} and above two lemmas. 
\begin{lemma}
Let $\cN = \cN_{\cC_{4}/\rM}$ be the normal bundle. Then 
\[
	c(\cN) = 1 +(k-2\eta) + (k^{2}-k\eta + \eta^{2}).
\]
\end{lemma}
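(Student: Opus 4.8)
The plan is to use the fact that $\cC_4 \hookrightarrow \rM$ is a smooth subvariety of codimension two, so that the normal bundle $\cN = \cN_{\cC_4/\rM}$ sits in the short exact sequence
\[
	0 \to \cT_{\cC_4} \to j^{*}\cT_{\rM} \to \cN \to 0.
\]
By the Whitney sum formula this reads $c(\cN) = j^{*}c(\cT_{\rM})/c(\cT_{\cC_4})$, and since $\cN$ has rank two I only need the degree-one and degree-two parts. Reading off coefficients degree by degree gives
\[
	c_{1}(\cN) = j^{*}c_{1}(\cT_{\rM}) - c_{1}(\cT_{\cC_4}), \qquad
	c_{2}(\cN) = j^{*}c_{2}(\cT_{\rM}) - c_{2}(\cT_{\cC_4}) - c_{1}(\cT_{\cC_4})\,c_{1}(\cN),
\]
so the whole computation reduces to assembling four classes that are already available in the excerpt.

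First I would record the Chern classes of $\cC_4$ itself by specializing Equation \eqref{eqn:twoChernclassesCd} at $d = 4$ (hence $r_{4} = 14$), which yields $c_{1}(\cT_{\cC_4}) = -k + 14\eta$ and $c_{2}(\cT_{\cC_4}) = 7k^{2} - 10k\eta + 91\eta^{2}$. Next I would take the expressions of Lemma \ref{lem:twoChernclasses}, namely $c_{1}(\cT_{\rM}) = 12\alpha$ and $c_{2}(\cT_{\rM}) = 66\alpha^{2} - 3\alpha\beta - 3\beta^{2} + 6x + 2y + 34z$, and apply $j^{*}$ using the lemma immediately preceding, which records $j^{*}$ on the generators: $j^{*}\alpha = \eta$, $j^{*}\beta = 3\eta - 3k$, $j^{*}x = j^{*}y = -3k\eta$, and $j^{*}z = \eta^{2} - k\eta + k^{2}$. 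This substitution is purely mechanical and, after collecting in the basis $\{\eta^{2}, k\eta, k^{2}\}$ of $\rA^{2}(\cC_4)$, I expect $j^{*}c_{1}(\cT_{\rM}) = 12\eta$ and $j^{*}c_{2}(\cT_{\rM}) = 64\eta^{2} + 5k\eta + 7k^{2}$.

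Finally I would combine. Plugging the $\cC_4$-data and these pullbacks into the two displayed formulas should produce $c_{1}(\cN) = 12\eta - (-k + 14\eta) = k - 2\eta$, and then $c_{2}(\cN) = \eta^{2} - k\eta + k^{2}$, giving exactly $c(\cN) = 1 + (k - 2\eta) + (k^{2} - k\eta + \eta^{2})$. Note that the defining relations of $\rA^{*}(\cC_4)$, namely $k^{3} = 0$ and the degree-$14$ relation, never intervene, since every class in sight has degree at most two.

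Since every ingredient is already computed earlier, there is no conceptual obstacle; the difficulty is entirely bookkeeping. The main source of slip will be applying $j^{*}$ to the quadratic terms of $c_{2}(\cT_{\rM})$ --- in particular the cross term $\alpha\beta$ and the square $\beta^{2}$, whose pullbacks mix $k\eta$ and $k^{2}$ contributions --- and then tracking signs through the Whitney division. A convenient internal check is that the final $c_{2}(\cN)$ must reproduce $\eta^{2} - k\eta + k^{2}$ exactly; any arithmetic error in the $\beta^{2}$ term would already spoil the $k^{2}$-coefficient.
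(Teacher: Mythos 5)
Your proposal is correct and is exactly the argument the paper intends: the paper states the lemma as ``an immediate consequence'' of Equation \eqref{eqn:twoChernclassesCd} and the two preceding lemmas, which amounts to the normal bundle sequence $0 \to \cT_{\cC_4} \to j^{*}\cT_{\rM} \to \cN \to 0$ and the Whitney formula applied to precisely the classes you assemble. Your intermediate values ($c(\cT_{\cC_4}) = 1 + (-k+14\eta) + (7k^{2}-10k\eta+91\eta^{2})$ and $j^{*}c(\cT_{\rM}) = 1 + 12\eta + (64\eta^{2}+5k\eta+7k^{2}) + \cdots$) and the final division all check out.
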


By applying Theorem \ref{thm:blowupChernclass} again, we obtain $r^{*}c(\cT_{\rM})$. Since $r^{*}$ is an injective morphism, theoretically the computation of an expression of the Chern class in terms of $\alpha, \beta, x, y, z$ is a linear algebra problem, but the actual computation is very cumbersome. David Swinarski kindly made a Macaulay2 code to find such an expression.

\begin{proposition}\label{prop:Chernclass}
The total Chern class is
\[
\begin{split} 
&\;c(\cT_{\rM}) = 1+12\alpha+66 \alpha^2-3 \alpha \beta-3 \beta^2+6 x+2 y+34 z\\
&\;+220 \alpha^3-33 \alpha^2 \beta-33 \alpha \beta^2-4 \beta^3+60 \alpha x-6 \beta x+30 \alpha y+22 \beta y+414 \alpha z+22 \beta z\\
&\;+495 \alpha^4-165 \alpha^3 \beta-162 \alpha^2 \beta^2-36 \alpha \beta^3+270 \alpha^2 x-72 \alpha \beta x+9 x^2+195 \alpha^2 y+225 \alpha \beta y+18 x y\\
&\;-41 y^2+877 \alpha \beta z-774 y z+163 z^2\\
&\;+792 \alpha^5-495 \alpha^4 \beta-465 \alpha^3 \beta^2-144 \alpha^2 \beta^3+720 \alpha^3 x-378 \alpha^2 \beta x+54 \alpha x^2+740 \alpha^3 y+1038 \alpha^2 \beta y\\
&\;+258 \alpha x y-450 \alpha y^2+\beta y^2-123 \alpha y z+16667 \alpha z^2-(7537/3) \beta z^2\\
&\;+924 \alpha^6-990 \alpha^5 \beta-855 \alpha^4 \beta^2-339 \alpha^3 \beta^3+1260 \alpha^4 x-1152 \alpha^3 \beta x+108 \alpha^2 x^2+1845 \alpha^4 y+2843 \alpha^3 \beta y\\
&\;+1464 \alpha^2 x y-2205 \alpha^2 y^2-5 \alpha \beta y^2+(59/2) y^3-(4063/3) y^2 z-12306 y z^2+18894 z^3\\
&\;+792 \alpha^7-1386 \alpha^6 \beta-1026 \alpha^5 \beta^2-531 \alpha^4 \beta^3+1512 \alpha^5 x-2268 \alpha^4 \beta x-9 \alpha^3 x^2+3204 \alpha^5 y+5112 \alpha^4 \beta y\\
&\;+4611 \alpha^3 x y-6407 \alpha^3 y^2-87 \alpha^2 \beta y^2+201 \alpha y^3-17941 \alpha y z^2+27317 \alpha z^3-(952/3) \beta z^3\\
&\;+495 \alpha^8-1386 \alpha^7 \beta-756 \alpha^6 \beta^2-612 \alpha^5 \beta^3+1260 \alpha^6 x-3024 \alpha^5 \beta x-459 \alpha^4 x^2+3990 \alpha^6 y+6282 \alpha^5 \beta y\\
&\;+9231 \alpha^4 x y-12342 \alpha^4 y^2-(775/2) \alpha^3 \beta y^2+615 \alpha^2 y^3+(39845/6) y^2 z^2-27565 y z^3+26133 z^4\\
&\;+220 \alpha^9-990 \alpha^8 \beta-234 \alpha^7 \beta^2-588 \alpha^6 \beta^3+720 \alpha^7 x-2772 \alpha^6 \beta x-1080 \alpha^5 x^2+3600 \alpha^7 y\\
&\;+5292 \alpha^6 \beta y+12648 \alpha^5 x y-16656 \alpha^5 y^2-(1757/2) \alpha^4 \beta y^2+(2403/2) \alpha^3 y^3\\
&\;-(93083/2) \alpha y z^3-(118563/2) \alpha z^4+(52961/2) \beta z^4\\
&\;+66 \alpha^{10}-495 \alpha^9 \beta+135 \alpha^8 \beta^2-522 \alpha^7 \beta^3+270 \alpha^8 x-1728 \alpha^7 \beta x-1512 \alpha^6 x^2+2340 \alpha^8 y+2934 \alpha^7 \beta y\\
&\;+12516 \alpha^6 x y-16170 \alpha^6 y^2-1104 \alpha^5 \beta y^2+(3783/2) \alpha^4 y^3+9131 y^2 z^3-15564 y z^4+14298 z^5\\
&\;+12 \alpha^{11}-165 \alpha^{10} \beta+195 \alpha^9 \beta^2-414 \alpha^8 \beta^3+60 \alpha^9 x-702 \alpha^8 \beta x-1566 \alpha^7 x^2+1070 \alpha^9 y+918 \alpha^8 \beta y\\
&\;+9426 \alpha^7 x y-11346 \alpha^7 y^2-644 \alpha^6 \beta y^2+2628 \alpha^5 y^3-16382 \alpha y z^4-14284 \alpha z^5+(24554/3) \beta z^5\\
&\;-36 \alpha^{11} \beta+99 \alpha^{10} \beta^2-255 \alpha^9 \beta^3-180 \alpha^9 \beta x-1296 \alpha^8 x^2+330 \alpha^{10} y+35 \alpha^9 \beta y+5724 \alpha^8 x y\\
&\;-5688 \alpha^8 y^2+99 \alpha^7 \beta y^2+2814 \alpha^6 y^3-(3587/3) y^2 z^4-6357 y z^5+6466 z^6\\
&\;+36 \alpha^{11} \beta^2-99 \alpha^{10} \beta^3-765 \alpha^9 x^2+60 \alpha^{11} y-99 \alpha^{10} \beta y+2775 \alpha^9 x y-1745 \alpha^9 y^2+576 \alpha^8 \beta y^2\\
&\;+2301 \alpha^7 y^3+6287 \alpha y z^5+28841 \alpha z^6-(26296/3) \beta z^6\\
&\;-36 \alpha^{11} \beta^3-297 \alpha^{10} x^2-48 \alpha^{11} \beta y+1353 \alpha^{10} y^2+(2995/2) \alpha^9 \beta y^2+(13617/2) \alpha^8 y^3\\
&\;-(10889/2) y^2 z^5-1698 y z^6+1333 z^7\\
&\;-108 \alpha^{11} x^2+696 \alpha^{11} y^2-(8915/2) \alpha^9 y^3+4078 \alpha y z^6+8370 \alpha z^7-3253 \beta z^7\\
&\;+202 y z^7-619 z^8+
(64/3) \beta z^8.
\end{split}
\]
\end{proposition}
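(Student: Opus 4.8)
The plan is to obtain $c(\cT_{\rM})$ by pulling it back to $\rM^{+}$, where every ingredient is already computable, and then to descend the answer using the injectivity of $r^{*}$. First I would apply the blow-up Chern class formula of Theorem \ref{thm:blowupChernclass} to the smooth blow-up $r : \rM^{+} \to \rM$ with center the codimension-two subvariety $\cC_{4}$. Writing $j : E \to \rM^{+}$ for the inclusion of the exceptional divisor, $g = r' : E \to \cC_{4}$ for the projection, $N = \cN_{\cC_{4}/\rM}$ for the normal bundle, and $\zeta = c_{1}(\cO_{E}(1))$ for the relative hyperplane class, the formula reads
\[
r^{*}c(\cT_{\rM}) = c(\cT_{\rM^{+}}) - j_{*}\big(g^{*}c(\cC_{4}) \cdot A\big),
\]
where $A = \sum_{j=0}^{2}\sum_{k=0}^{2-j}\big(\binom{2-j}{k}-\binom{2-j}{k+1}\big)\zeta^{k}g^{*}c_{j}(N)$.

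Every term on the right-hand side is in hand. The total Chern class $c(\cT_{\rM^{+}})$ was already produced from the blow-up $\rM^{+}\to\rQ$ by the same theorem; the normal bundle satisfies $c(N) = 1 + (k-2\eta) + (k^{2}-k\eta+\eta^{2})$; and $c(\cC_{4})$, the total Chern class of the tangent bundle of the universal quartic, is determined by the Euler sequence of Section \ref{ssec:univcurve}. Since the restriction map $i^{*} : \rA^{*}(\rM^{+}) \to \rA^{*}(E)$ is known explicitly and $\zeta$ together with the generators of $\rA^{*}(E)$ is expressible through the relations established in Section \ref{sec:ChowM}, the push-forward $j_{*}$ is evaluated by the projection formula and the class $[E]$. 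Carrying this out degree by degree yields $r^{*}c(\cT_{\rM})$ as a completely explicit element of $\rA^{*}(\rM^{+})$, consistent with the low-degree computations of $r^{*}c_{1}$ and $r^{*}c_{2}$ in Lemma \ref{lem:twoChernclasses}.

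The second step is to translate this element back to $\rA^{*}(\rM)$. By Theorem \ref{thm:ChowM} the image of the injective map $r^{*}$ is exactly the subring generated by $r^{*}\alpha = \rho+\tau$, $r^{*}\beta = b_{1}+3\tau$, $r^{*}x = b_{2}-(3\rho-b_{1})\tau$, $r^{*}y = d_{2}-(3\rho-b_{1})\tau$, and $r^{*}z = \tau(\tau+\rho+b_{1}/3)$. Since $r^{*}c(\cT_{\rM})$ lies in this subring, in each degree $i$ it is a \emph{unique} $\QQ$-linear combination of the degree-$i$ monomials in these five classes; finding that combination—by expanding both the computed class and the relevant monomials in a fixed basis of $\rA^{i}(\rM^{+})$ and solving the resulting linear system—produces the desired expression of $c_{i}(\cT_{\rM})$ in $\alpha,\beta,x,y,z$.

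The hard part is not conceptual but the sheer scale of the bookkeeping: $\rM$ has dimension $17$, so the Chern classes run all the way up to $c_{17}$, and in the intermediate degrees both $\rA^{i}(\rM^{+})$ and the spaces of degree-$i$ monomials in the generators are large, making the inversion systems far too big to solve by hand. For this reason I would delegate the whole computation to Macaulay2 (\cite{M2}). The solvability of the linear system in every degree is itself the decisive validation: had the blow-up formula been misapplied, the computed class would fail to lie in the image of $r^{*}$ and some system would be inconsistent, so a uniform solution across all degrees confirms both that $r^{*}c(\cT_{\rM})$ descends to $\rA^{*}(\rM)$ and that the final presentation is correct.
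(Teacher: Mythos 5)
Your proposal follows the paper's proof essentially verbatim: apply the blow-up Chern class formula of Theorem \ref{thm:blowupChernclass} to $r:\rM^{+}\to\rM$ using $c(\cT_{\rM^{+}})$, $c(\cT_{\cC_{4}})$ and $c(\cN_{\cC_{4}/\rM})$, and then solve, degree by degree with Macaulay2, the linear-algebra problem of writing the resulting element of the subring $r^{*}(\rA^{*}(\rM))\subset\rA^{*}(\rM^{+})$ in the generators $\alpha,\beta,x,y,z$. The only point of logical order worth flagging is that $c(\cN_{\cC_{4}/\rM})$ is not ``in hand'' independently of the statement being proved: in the paper it is deduced from $r^{*}c_{1}(\cT_{\rM})$ and $r^{*}c_{2}(\cT_{\rM})$ (Lemma \ref{lem:twoChernclasses}), which must first be obtained via the special low-codimension forms of the blow-up formula that do not require the normal bundle, whereas you present those two classes as mere consistency checks rather than as the prerequisite for the normal bundle computation.
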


\begin{remark}\label{rem:pointclass}
From the Poincar\'e polynomial (Proposition \ref{prop:ppolyM}), $\chi_{\mathrm{top}}(\rM) = 192$. By Gauss-Bonnet theorem, 
\[
	\int_{\rM} c_{17}(\cT_{\rM}) = \chi_{\mathrm{top}}(\rM) = 192.
\]
Therefore the point class in $\rA^{17}(\rM)$ is $\beta z^{8}/9$. 
\end{remark}

\subsection{Effective cycles}

In this section, we compute some effective cycle classes on $\rM$. 

The first natural effective cycle is the Brill-Noether locus. The blow-up center $\cC_{4} = \{F\in \rM|\;\dim \rH^{0}(F) = 2\}$ is a codimension two Brill-Noether locus. 

\begin{proposition}\label{prop:BNlocus}
For $r : \rM^{+} \to \rM$, 
\[
	r^{*}[\cC_{4}] = \tau(\tau + \rho + \frac{1}{3}b_{1}) = z.
\]
\end{proposition}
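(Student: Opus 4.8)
The plan is to exploit the injectivity of $r^{*} : \rA^{*}(\rM) \to \rA^{*}(\rM^{+})$ established before Theorem \ref{thm:ChowM}, so that it suffices to verify the identity after pulling back, i.e. to prove $r^{*}[\cC_{4}] = \tau(\tau + \rho + \frac{1}{3}b_{1})$ in $\rA^{*}(\rM^{+})$, the right-hand side being exactly $r^{*}(z)$ by Theorem \ref{thm:ChowM}. Since $r$ is an isomorphism away from the exceptional divisor $E$ and $r(E) = \cC_{4}$, the class $r^{*}[\cC_{4}]$ is supported on $E$, so I would write $r^{*}[\cC_{4}] = i_{*}\alpha$ for a divisor class $\alpha \in \rA^{1}(E)$. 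Using the blow-up square \eqref{eqn:blowupdiagramforM} with $g := r' : E \to \cC_{4}$ a $\PP^{1}$-bundle (as $\cC_{4}$ has codimension two), the standard pullback formula for the class of the center of a codimension-two blow-up (a specialization of the results in \cite[Chapter 6]{Ful98}) gives $\alpha = \zeta + g^{*}c_{1}(\cN_{\cC_{4}/\rM})$, where $\zeta = c_{1}(\cO_{E}(1))$ for this $\PP^{1}$-bundle structure. The decisive point is that this formula involves only $c_{1}(\cN_{\cC_{4}/\rM})$; the $c_{2}$ contributions cancel in its derivation (one solves $g_{*}\alpha = 1$ together with $-\zeta\,\alpha = g^{*}c_{2}(\cN_{\cC_{4}/\rM})$ inside $\rA^{*}(\cC_{4})[\zeta]/(\zeta^{2}+g^{*}c_{1}\zeta + g^{*}c_{2})$, and $c_{2}$ drops out). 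Here one must respect the paper's subspace convention $\PP(\cN) = \proj(\sym^{\bullet}\cN^{*})$, which forces $\cN_{E/\rM^{+}} = \cO_{E}(-1)$ and hence the $+$ sign in front of $g^{*}c_{1}(\cN_{\cC_{4}/\rM})$.

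The crux is then to evaluate $i_{*}\alpha$ using \emph{only} $c_{1}$-level data, so as to avoid any circular dependence on $c_{2}(\cT_{\rM})$, which is itself being computed via this proposition in Lemma \ref{lem:twoChernclasses}. First, at the level of canonical classes the codimension-two blow-up formula gives $[E] = r^{*}c_{1}(\cT_{\rM}) - c_{1}(\cT_{\rM^{+}}) = 12(\rho+\tau) - (12\rho + 13\tau) = -\tau$; combined with $i^{*}[E] = -\zeta$ this yields $\zeta = i^{*}\tau$, hence $i_{*}\zeta = i_{*}i^{*}\tau = \tau\,[E] = -\tau^{2}$. Second, I would extract the first Chern class of the normal bundle purely from first Chern classes via $c_{1}(\cN_{\cC_{4}/\rM}) = j^{*}c_{1}(\cT_{\rM}) - c_{1}(\cT_{\cC_{4}})$. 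Since $r^{*}c_{1}(\cT_{\rM}) = 12(\rho+\tau)$ we have $j^{*}c_{1}(\cT_{\rM}) = 12\eta$, while \eqref{eqn:twoChernclassesCd} with $d = 4$, $r_{4} = 14$ gives $c_{1}(\cT_{\cC_{4}}) = -k + 14\eta$; therefore $c_{1}(\cN_{\cC_{4}/\rM}) = 12\eta - (-k + 14\eta) = k - 2\eta$, with no $c_{2}$ input.

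To finish I would transport $g^{*}k$ and $g^{*}\eta$ into restrictions of classes on $\rM^{+}$ through the Cartesian identity $i^{*}r^{*} = g^{*}j^{*}$ together with the restriction lemma $j^{*}\alpha = \eta$, $j^{*}\beta = 3(\eta - k)$. This gives $g^{*}\eta = i^{*}(\rho + \tau)$, and solving, $g^{*}k = i^{*}(\rho - \frac{1}{3}b_{1})$, whence $g^{*}c_{1}(\cN_{\cC_{4}/\rM}) = i^{*}(-\rho - \frac{1}{3}b_{1} - 2\tau)$ and $i_{*}g^{*}c_{1}(\cN_{\cC_{4}/\rM}) = [E]\cdot(-\rho - \tfrac{1}{3}b_{1} - 2\tau) = (-\tau)(-\rho - \tfrac{1}{3}b_{1} - 2\tau) = \tau\rho + \tfrac{1}{3}\tau b_{1} + 2\tau^{2}$. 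Adding the two contributions, $r^{*}[\cC_{4}] = i_{*}\zeta + i_{*}g^{*}c_{1}(\cN_{\cC_{4}/\rM}) = -\tau^{2} + \tau\rho + \tfrac{1}{3}\tau b_{1} + 2\tau^{2} = \tau(\tau + \rho + \tfrac{1}{3}b_{1}) = r^{*}(z)$, and injectivity of $r^{*}$ yields $[\cC_{4}] = z$.

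The main obstacle is neither the bookkeeping nor the transport step, but pinning down the blow-up pullback formula in exactly the right normalization: the precise coefficient and sign of $g^{*}c_{1}(\cN_{\cC_{4}/\rM})$ depend on the $\PP$-bundle convention and on the identification $\zeta = i^{*}\tau$. Equally important is to route the entire computation through $c_{1}$ only — extracting $c_{1}(\cN_{\cC_{4}/\rM})$ from $c_{1}(\cT_{\rM})$ and the known $c_{1}(\cT_{\cC_{4}})$ rather than from the full Chern class of the normal bundle — since the $c_{2}$ of that normal bundle lies downstream of the very statement being proved, and using it would be circular.
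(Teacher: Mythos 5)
Your argument is correct and is essentially the paper's own proof: both rest on the codimension-two case of Fulton's blow-up formula $r^{*}[\cC_{4}] = i_{*}\bigl(\zeta + g^{*}c_{1}(\cN_{\cC_{4}/\rM})\bigr)$, extract $c_{1}(\cN_{\cC_{4}/\rM}) = k - 2\eta$ from first Chern classes alone to avoid circularity with $c_{2}(\cT_{\rM})$, transport classes through $i^{*}r^{*} = g^{*}j^{*}$ using $j^{*}\alpha = \eta$, $j^{*}\beta = 3(\eta-k)$, and use $[E] = -\tau$ to land on $\tau(\tau+\rho+\tfrac{1}{3}b_{1})$. The only cosmetic difference is that you rederive the pullback formula from the Grothendieck relation on the $\PP^{1}$-bundle $E \to \cC_{4}$ (correctly observing that $c_{2}$ of the normal bundle cancels), whereas the paper cites \cite[Proposition 6.7(a)]{Ful98} directly.
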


\begin{proof}
Consider the blow-up diagram \eqref{eqn:blowupdiagramforM}:
\[
	\xymatrix{E \ar[r]^{i} \ar[d]_{r'} & \rM^{+} \ar[d]^{r}\\ 
	\cC_{4} \ar[r]^{j} & \rM.}
\]
By \cite[Proposition 6.7.(a)]{Ful98}, $r^{*}[\cC_{4}] = i_{*}(c_{1}(t^{*}\cN) + c_{1}(\cO_{\PP \cN}(1))) $ where $\cN := \cN_{\cC_{4}/\rM}$. Then 
\[
\begin{split}
	r^{*}[\cC_{4}] 
	&= i_{*}(c_{1}({r'}^{*}(\cN))+c_{1}(\cO_{\PP \cN}(1)))
	= i_{*}({r'}^{*}(k-2\eta)+i^{*}c_{1}(\cO(-E)))\\
	&= i_{*}({r'}^{*}j^{*}((-\frac{1}{3}b_{1}+\rho)-2(\rho+\tau))
	-i^{*}c_{1}(\cO(E)))\\	
	&= i^{*}(i^{*}r^{*}((-\frac{1}{3}b_{1}+\rho)-2(\rho+\tau)))
	-i_{*}i^{*}c_{1}(\cO(E))\\
	&=-\tau((-\frac{1}{3}b_{1}+\rho)-2(\rho+\tau))-(-\tau)^{2}
	=\tau(\tau+\rho+\frac{1}{3}b_{1}).
\end{split}
\]
\end{proof}

The class $\alpha$ can be interpreted in the following way. 
\begin{proposition}\label{prop:alphaclass}
Fix a point $p$ on $\PP^{2}$. Let $S$ be the locus of $F\in \rM$ such that $p \in \mathrm{supp}(F)$. Then,
\[
	S = \alpha.
\]
\end{proposition}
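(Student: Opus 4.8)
The plan is to identify the divisor class $\alpha$ with the pullback of the hyperplane class under the Fitting map $\pi : \rM \to |\cO_{\PP^{2}}(4)|$, and then to recognize $S$ as the preimage of a hyperplane, so that the proposition becomes a pullback computation on Chow groups.

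First I would pin down $\alpha$ cohomologically. By Lemma \ref{lem:twoChernclasses} we have $r^{*}c_{1}(\cT_{\rM}) = 12\alpha$, and since $r^{*}$ is injective (Theorem \ref{thm:ChowM}) this forces $c_{1}(\cT_{\rM}) = 12\alpha$, i.e. $-K_{\rM} = 12\alpha$. On the other hand, in the proof of Lemma \ref{lem:QisBridgeland} we recalled that $-K_{\rM} = 12\pi^{*}\cO_{|\cO_{\PP^{2}}(4)|}(1)$ (\cite[Lemma 3.1]{Woo13}). Working with rational coefficients and dividing by $12$, I conclude
\[
	\alpha = \pi^{*}\cO_{|\cO_{\PP^{2}}(4)|}(1),
\]
the pullback of the hyperplane class on $|\cO_{\PP^{2}}(4)| \cong \PP^{14}$.

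Next I would describe $S$ geometrically. The Fitting map sends $F$ to its support quartic, so $F \in \rM$ satisfies $p \in \mathrm{supp}(F)$ precisely when that support quartic passes through $p$; that is, $\pi(F)$ lies in the hyperplane $H_{p} := \{C \in |\cO_{\PP^{2}}(4)| \mid p \in C\}$, which represents the class $\cO_{|\cO_{\PP^{2}}(4)|}(1)$. Hence $S = \pi^{-1}(H_{p})$ set-theoretically, and it remains only to compare cycle classes: I want $[S] = \pi^{*}[H_{p}] = \alpha$.

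The main point, and the only place requiring genuine care, is to verify that the effective Cartier divisor $\pi^{*}H_{p}$ is reduced, so that $[\pi^{*}H_{p}] = [S]$ rather than a positive multiple of $[S]$. I would argue this in two steps. First, $S$ is irreducible: $H_{p}\cong\PP^{13}$ is irreducible, and the generic fibre of $\pi$ over $H_{p}$ is a torsor under $\mathrm{Jac}(C)$ for a smooth quartic $C$, hence an irreducible abelian variety of dimension $3$ (matching $\dim S = 16 = 13 + 3$). Second, over the dense open locus of smooth quartics through $p$ the morphism $\pi$ is smooth, so the pullback of the reduced divisor $H_{p}$ remains generically reduced along $S$; an effective Cartier divisor that is generically reduced on an irreducible support is reduced. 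This gives $\pi^{*}H_{p} = [S]$ and therefore $[S] = \pi^{*}\cO_{|\cO_{\PP^{2}}(4)|}(1) = \alpha$. I expect the smoothness (transversality) of $\pi$ near $S$ to be the hardest ingredient; once $\alpha$ is identified with $\pi^{*}\cO(1)$ in the first step, the remainder is formal.
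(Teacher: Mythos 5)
Your proposal is correct and follows essentially the same route as the paper, whose entire proof is the one-line chain $12\alpha = -K_{\rM} = 12\pi^{*}\cO_{|\cO_{\PP^{2}}(4)|}(1) = 12S$ citing Lemma \ref{lem:twoChernclasses} and \cite[Lemma 3.1]{Woo13}. The only difference is that you explicitly justify the final equality $\pi^{*}[H_{p}] = [S]$ (irreducibility of $S$ via the Jacobian fibres and generic smoothness of $\pi$ along $S$, hence multiplicity one), a point the paper asserts without comment; this is a worthwhile, and correct, addition rather than a deviation.
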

\begin{proof}
By Lemma \ref{lem:twoChernclasses} and \cite[Lemma 3.1]{Woo13}, for the Fitting map $\pi : \rM \to |\cO_{\PP^{2}}(4)|$, 
\[
	12\alpha = -K_{\rM} = 12\pi^{*}(\cO_{|\cO_{\PP^{2}}(4)|}(1))
	= 12S. 
\]
\end{proof}

The next two effective cycles are obtained from the cokernel of the structure map $\cO_{\PP^{2}} \stackrel{s}{\to} F$. Note that for $F \in \rM \setminus (\cC_{4}\cup r(W))$ (see Definition \ref{def:EandW}), $\dim\rH^{0}(F) = 1$ and the cokernel $Q_{F}$ of $\cO \stackrel{s}{\to} F$ has a finite support.
\begin{definition}
\begin{enumerate}
\item Let $O$ be the closure of the locus of $F \in \rM$ such that $Q_{F}$ contains a fixed point. 
\item Let $L$ be the closure of the locus of $F \in \rM$ such that $Q_{F}$ meets a fixed line. 
\end{enumerate}
\end{definition}

In the remaining of this section, we will compute their numerical classes. 

We may construct some natural effective cycles on $\rN$ from Schubert calculus. For an embedding $\iota : \rN \hookrightarrow \mathrm{Gr}(3,\mathrm{Sym}^{2}V^{*})$ (Remark \ref{rem:setmapn}), $\cE \cong \iota^{*}\cS$ where $\cS$ is the tautological subbundle (\cite[Section 3.2]{Tjo98}). On the other hand, the Chern classes of $\cS$ and that of the universal quotient bundle $\cQ$ can be described in terms of Schubert cycles as the following:
\[
	c_{i}(\cS) = (-1)^{i}\sigma_{1,1,\cdots, 1}, 
	\quad c_{i}(\cQ) = \sigma_{i}.
\]
Thus $b_{1} = -\sigma_{1}$, $b_{2} = \sigma_{1,1}$, and $b_{3} = -\sigma_{1,1,1}$. Also we have
\[
	\sigma_{1,1}+\sigma_{2} = \sigma_{1}^{2}, \quad
	\sigma_{1,1,1} + \sigma_{1}(\sigma_{1,1}-\sigma_{2}) + \sigma_{3} = 0
\]
from $c(\cS)c(\cQ) = 1$.

We may describe Chern classes of $\cQ$ as the following way. For a fixed 3-dimensional subspace $W$ of $\mathrm{Sym}^{2}V^{*}$, $\sigma_{1}(W)$ is the locus of subspaces with nontrivial intersection with $W$. Let $W$ be a net of quadrics which has 3 non-linear base points $p_{1}, p_{2}, p_{3}$. Then for any $W' \in \sigma_{1}(W) \cap (\rN - \PP V^{*})$, there is a conic $C \in W' \cap W$. Then the net $W'$ defines three points on $C$. Thus $W'$ parameterizes three points $q_{1}, q_{2}, q_{3}$ such that all of $p_{i}, q_{j}$ lie on a conic. Evidently this is a codimension 1 condition since for every set of 5 points there is a unique conic passing through them.

Similarly, for a two dimensional subspace $T \subset \mathrm{Sym}^{2}V^{*}$, $\sigma_{2}(T)$ is the locus of $W'$ such that $W' \cap T$ is nontrivial. $T$ defines a pencil of conics, which provides 4 points $p_{1}, \cdots, p_{4}$. If $W' \in \sigma_{2}(T)$, then a conic $C \in W'$ contains all of $p_{i}$'s. Thus $\sigma_{2}(T) \cap (\rN - \PP V^{*})$ is the locus of three points $q_{1}, q_{2}, q_{3}$ such that there is a conic containing all of $p_{i}$ and $q_{j}$.

Finally, $\sigma_{3}$ can be regarded as the locus of 3 points lying on a fixed conic. 

Recall that there is a contraction map $t : \rH(3) \to  \rN$ (Proposition \ref{prop:hilbcon}). In \cite{ELB89}, Elencwajg and Le Barz provided explicit generators and the intersection pairing of $\rA^{*}(\rH(3))$. 

\begin{definition}[\protect{\cite[I, II]{ELB89}}]\label{def:cyclesonH}
\begin{enumerate}
\item The class $A$ is a divisor of $\rH(3)$ parametrizing subschemes which lies on a line. Note that $A$ is the exceptional divisor of $t$. 
\item The divisor $H$ is the locus of subschemes whose supports meet a fixed line. 
\item The codimension two class $h$ is the locus of subschemes whose supports contains a fixed point. 
\item The codimension two class $a$ is the locus of subschemes aligned with a fixed point. 
\item The codimension two class $p$ is the locus of subschemes that two points lies on a fixed line. 
\end{enumerate}
\end{definition}

The canonical divisor $K_{\rH(3)}$ is $-3H$. Since $K_{\rN} = 3b_{1}$, $t_{*}(H) = -b_{1}$, $t^{*}(-b_{1}) = H+A$ by adjunction. Therefore $-b_{1}$ is precisely the closure of the locus of three points whose supports meet a fixed line. Thus $-\beta = -b_{1}-3\tau \in \rA^{*}(\rM)$ is the closure of the locus parametrizing $F \in \rM$ such that $Q_{F}$ meets a fixed line, which is precisely $L$. 

Now the computation of intersection numbers of $t^{*}(\sigma_{2})$ with a basis of $\rA^{4}(\rH(3))$ is straightforward. For instance, $H^{2}h \cdot t^{*}(\sigma_{2}) = t_{*}(H^{2}h)\cdot \sigma_{2}$ is the number of triples $\{p_{1}, p_{2}, p_{3}\}$ of points on $\PP^{2}$ such that $p_{1}$ is fixed, $p_{2}$ (resp. $p_{3}$) lies on a line $\ell_{1}$ (resp. $\ell_{2}$), and $\{p_{i}\}$ and general four points $q_{1}, \cdots, q_{4}$ lies on a conic. Since there is only one conic $C$ that passes through all $q_{j}$'s and $p_{1}$, two remaining points $p_{2}$ and $p_{3}$ must be on the intersection of $(\ell_{1} \cup \ell_{2}) \cap C$. Therefore $H^{2}h \cdot t^{*}(\sigma_{2}) = 4$. From similar geometric argument, we can obtain 
\[
	h^{2}\cdot t^{*}(\sigma_{2}) = ha \cdot t^{*}(\sigma_{2}) = 
	H^{2}\cdot t^{*}(\sigma_{2}) = 0, hp\cdot t^{*}(\sigma_{2}) = 1.
\]
We leave the computation to the reader. Then from the intersection numbers in \cite[p.112]{ELB89}, we obtain 
\[
	t^{*}(\sigma_{2}) = H^{2} + HA - 2h + a - p.
\]
From $\sigma_{2} = b_{1}^{2} - b_{2}$, $t^{*}(-b_{1}) = H + A$, and $A^{2} + HA = 3a$ (\cite[p.112]{ELB89}), $t^{*}(b_{2}) = 2a + 2h + p$. 

On the other hand, from the blow-up formula of Chow ring (\cite[Theorem 1 in Appendix]{Kee92}), 
\[
	\rA^{*}(\rH(3)) = \rA^{*}(\rN)[A]/\langle A(b_{1}^{2}-3d_{2}), 
	A(b_{2}-d_{2}), A^{4}+c_{1}(\cN)A^{3}+c_{2}(\cN)A^{2}+
	c_{3}(\cN)A+[\PP V^{*}]\rangle.
\]
So the dimension of the kernel of $\cdot A : \rA_{4}(\rH(3)) \to \rA_{3}(\rH(3))$ is two and it is generated by $t^{*}(b_{1}^{2}-3d_{2})$ and $t^{*}(b_{2}-d_{2})$. On the other hand, from the relations in \cite[p.112]{ELB89}, the kernel is generated by $a+h$ and $H^{2}+HA - 3h - 3p$. Therefore we are able to obtain $t^{*}(d_{2}) = a+h+p$. In particular, $h = t^{*}(b_{2}-d_{2})$ and $b_{2}-d_{2} = t_{*}(h)$. Therefore on $\rM$, $x-y = b_{2}-d_{2}$ is the closure of the locus of $F$ such that $Q_{F}$ contains a fixed point, which is $O$. 

In summary, we have the following additional relations. 
\begin{proposition}\label{prop:relation}
\[
	L = -\beta, \quad O = x-y. 
\]
\end{proposition}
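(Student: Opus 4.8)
The two identities are statements about codimension-one effective classes on $\rM$, and the plan is to compute each one by transporting it to $\rN$, where Schubert calculus and the contraction $t : \bH(3) \to \rN$ are available. The geometric bridge is the morphism $w : \rM^+ \to \rN$ of Lemma \ref{lem:rigidmap}: for a general pair $(s,F)$ with $F = \cO_C(p+q+r)$, the cokernel $Q_F$ of $\cO_{\PP^{2}}\stackrel{s}{\to}F$ is the reduced length-three scheme $\{p,q,r\}$, and $w$ sends $(s,F)$ to $I_{\{p,q,r\}}(2) \in \rN$. Thus, over the open locus where $r : \rM^+ \to \rM$ is an isomorphism, the divisor $L$ (resp. $O$) is carried to the locus in $\rN$ of three-point schemes meeting a fixed line (resp. containing a fixed point). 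So I would (i) compute these two loci as classes on $\rN$, and (ii) pull back to $\rM^+$ and match against $r^*(-\beta)$ and $r^*(x-y)$, using that $r^*$ is injective (Theorem \ref{thm:ChowM}).

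For the line-meeting class I would work on $\bH(3)$, where the divisor $H$ of Definition \ref{def:cyclesonH} is exactly the locus of subschemes meeting a fixed line and $A$ is the collinear divisor contracted by $t$. Since $K_{\bH(3)} = -3H$ and $K_{\rN} = 3b_1$, adjunction for the smooth blow-down $t$ gives $t^*(-b_1) = H + A$, so $t_*(H) = -b_1$ and the line-meeting locus on $\rN$ is the class $-b_1$. Comparing with the formula $r^*(\beta) = b_1 + 3\tau$ of Theorem \ref{thm:ChowM} then yields $L = -\beta$. Here the $3\tau$ discrepancy between $r^*(-\beta) = -b_1 - 3\tau$ and the naive transported class $-b_1$ is precisely the correction coming from the exceptional divisor $E$ of $r$ (equivalently, the multiplicity of the closure of $L$ along the Brill--Noether center $\cC_4$), and keeping track of it is the delicate point of this case.

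For the point-containing class the key computation is $t^*(b_2)$ and $t^*(d_2)$. Using the embedding $\rN \hookrightarrow \mathrm{Gr}(3,\mathrm{Sym}^{2}V^{*})$ of Remark \ref{rem:setmapn} and the dictionary $b_1 = -\sigma_1$, $b_2 = \sigma_{1,1}$, I would interpret the Schubert conditions $\sigma_1, \sigma_2, \sigma_3$ as conic-incidence loci and compute the products of $t^*(\sigma_2)$ with a basis of $\rA^4(\bH(3))$ by elementary point counts (typical nonzero values being $H^2 h \cdot t^*(\sigma_2) = 4$ and $h p \cdot t^*(\sigma_2) = 1$, the rest vanishing). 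Reading these against the Elencwajg--Le Barz intersection table gives $t^*(\sigma_2) = H^2 + HA - 2h + a - p$, hence $t^*(b_2) = 2a + 2h + p$; a parallel analysis of the two-dimensional kernel of multiplication by $A$ on $\rA_4(\bH(3))$ pins down $t^*(d_2) = a + h + p$. Together these show that the point-containing class $h$ pushes forward to $t_*(h) = b_2 - d_2$ on $\rN$. Because $r^*(x - y) = b_2 - d_2$ carries no $\tau$-correction, injectivity of $r^*$ immediately yields $O = x - y$.

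The main obstacle is the Schubert computation of $t^*(b_2)$ and $t^*(d_2)$: it requires both the correct geometric translation of the pulled-back Schubert conditions on $\bH(3)$ and careful use of the Elencwajg--Le Barz pairing, together with the linear algebra identifying the kernel of multiplication by $A$. The secondary subtlety, most visible in the $L$ case, is to verify that passing to closures over the exceptional loci of $r$ introduces exactly the multiple of $\tau$ predicted by the pullback formulas of Theorem \ref{thm:ChowM} --- so that the statements hold on the nose as $L = -\beta$ and $O = x - y$ rather than up to an uncontrolled exceptional term.
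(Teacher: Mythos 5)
Your proposal is correct and follows essentially the same route as the paper: identify $L$ via the adjunction computation $t^*(-b_1)=H+A$ on $\bH(3)$ and match against $r^*(\beta)=b_1+3\tau$, and identify $O$ via the Schubert-calculus computation of $t^*(\sigma_2)$, $t^*(b_2)=2a+2h+p$, $t^*(d_2)=a+h+p$ against the Elencwajg--Le Barz pairing, giving $t_*(h)=b_2-d_2=r^*(x-y)$. The one point you flag but do not fully close --- that the $3\tau$ term in $r^*(-\beta)$ is exactly the exceptional correction for the closure of $L$ --- is treated at the same level of detail in the paper itself, so this is not a departure from its argument.
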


\begin{remark}\label{rem:effnef}
The effective cone $\mathrm{Eff}(\rM)$ is generated by $\alpha$ and $-\beta$ (\cite[Corollary 2.3]{CC15}). The nef cone $\mathrm{Nef}(\rM)$ is generated by $\alpha$ and $3\alpha-\beta$ (\cite[Theorem 1.2]{CC15}). 
\end{remark}


\section{Euler characteristic of line bundles on $\rM$}\label{sec:Eulerchar}

For $c \in \mathrm{K}(\PP^{2})$, let $\rM(c)$ be the moduli space of semistable sheaves of topological type $c$. For notational simplicity, suppose that the stability coincides with semistability, thus there is a universal family $\cF$ over $\rM(c)$. On $\mathrm{K}(\PP^{2})$, there is a quadratic form $\langle v, w\rangle := \chi(v\cdot w)$. Also there is a homomorphism
\begin{equation}\label{eqn:FMtransform}
	\lambda:c^{\perp}\subset \mathrm{K}(\PP^2)\lr 
	\mathrm{Pic}(\rM(c)), \quad 
	w\mapsto \mathrm{det}(\pi_{1 !}(\cF\otimes \pi_{2}^*(w)))
\end{equation}
where $\pi_{1} :\bM(c)\times \PP^2\lr \rM(c)$ and $\pi_{2}:\rM(c)\times \PP^2\lr \PP^2$ are the projection maps. 

The \emph{strange duality} of theta divisors for the projective plane is formulated as the following, in terms of \emph{generalized K-theoretic Donaldson numbers} (\cite[Section 1.2]{GNY09}). For $E \in \mathrm{Pic}(\rM)$, the genearlized K-theoretic Donaldson number of $E$ is defined by 
\[
	\chi(\rM, E) = \int_{\rM}ch(E)td(\rM).
\]

Let $c, v \in \mathrm{K}(\PP^{2})$ such that $\chi(c\cdot v) = 0$. Let $\rM(c)$ (resp. $\rM(v)$) be the moduli space of semistable sheaves with topological type $c$ (resp. $v$). The (strong) strange duality conjecture is that there is a strange duality isomorphism 
\[
	SD : \rH^{0}(\rM(c), \lambda(-v))^{*} 
	\stackrel{\cong}{\longrightarrow} \rH^{0}(\rM(v), \lambda(-c)).
\]
A weak strange duality conjecture is that the dimension of two relevant K-theoretic Donaldson numbers $\chi(\rM(c), \lambda(-v))$, $\chi(\rM(v), \lambda(-c))$ are same. 

By using the Chern class formula in Proposition \ref{prop:Chernclass}, in principle, we are able to compute all K-theoretic Donaldson numbers for $\rM$. This provides numerical data for strange duality conjectures. 

For $F \in \rM$, $c := ch(F) = (r(F), c_{1}(F), ch_{2}(F)) = (0, 4, -5)$. In this case, $\lambda$ in \eqref{eqn:FMtransform} is an isomorphism (\cite[Theorem 3.5]{LP93a}). The domain $c^{\perp}$ is $\{(-4k, k, -k/2+m)\;|\;k, m \in \ZZ\}$. Moreover, it is straightforward to check that $[\cO_{x}] = (0, 0, 1)$ and $-4[\cO_{\PP^{2}}]+[h] = (-4, 1, -1/2)$ for a point $x \in \PP^{2}$ and a line $h \subset \PP^{2}$, are two generators of $c^{\perp}$. Furthermore, $\lambda([\cO_{x}]) = \alpha$, $\lambda(-4[\cO_{\PP^{2}}]+[h]) = -3\alpha-\beta$ (\cite[Proposition 2.5]{CC15}). Therefore $\lambda((-4k, k, -k/2+m)) = (-3k+m)\alpha - k \beta$. 

By Bogomolov's inequality $2rc_{2}-(r-1)c_{1}^{2} \ge 0$ (\cite[Theorem 3.4.1]{HuLe10}) for semistable sheaves on a surface, to guarantee $\rM(v) \ne \emptyset$, $m \ge 5k/8$. Note that $\mathrm{Eff}(\rM)$ is generated by $\alpha, -\beta$ and $\mathrm{Nef}(M)$ is generated by $\alpha, 3\alpha-\beta$ (Remark \ref{rem:effnef}). $\alpha$ is not big because it is a pull-back of the hyperplane class in $|\cO_{\PP^{2}}(4)|$, but $3\alpha-\beta$ is big. By Kawamata-Viehweg vanishing theorem and $K_{\rM} = -12\alpha$, if $m \ge 6k-12$ and $k \ge 1$, then $\chi(E) = \rH^{0}(E)$. In particular, since $-K_{\rM}$ lies on an extremal ray of the nef cone, every big and nef divisor has vanishing higher cohomologies. 

Table \ref{tbl:Donaldsonnumber} gives some examples. The black numbers are equal to $\rH^{0}(E)$, due to Kawamata-Viehweg vanishing theorem. The grey numbers are Euler characteristics to whom we are unable to apply the vanishing theorem. 

\begin{table}[!ht]
\begin{tabular}{|c||r|r|r|r|r|}\hline
$m \setminus k$ & 1& 2 & 3 & 4 & 5\\ \hline \hline
1 & 0 & & & &\\ \hline
2 & 0 & 0 & \gray -2163 & &\\ \hline
3 & 20 & 0 &\gray -295 & \gray -3601488 &\\ \hline
4 & 315 & 0 &\gray  -21 & \gray -1326276 &\gray -463995675 \\ \hline
5 & 2643 & 0 &\gray 0 & \gray -430920 &\gray -238005495 \\ \hline
6 & 15681 & 148 & 0 & \gray -120840 &\gray -114238355 \\ \hline
7 & 73725 & 2310 & 0 & \gray -28224 &\gray -51151287\\ \hline
8 & 292020 & 19194 & 0 & \gray -5172 &\gray -21242133\\ \hline
9 & 1012180 & 112774 & 664 & \gray -664 &\gray -8110245 \\ \hline
10 & 3149964 & 525330 & 10059 & \gray -45 &\gray -2811753 \\ \hline
11 & 8965188 & 2063286 & 81375 & \gray 0 & \gray -869877\\ \hline
12 & 23659750 & 7098210 & 466731 & 2206 & \gray -234213 \\ \hline
13 & 58517910 & 21947166 & 2127240 & 32508 & \gray -52857 \\ \hline
14 & 136793730 & 62121942 & 8191071 & 256557 & \gray -9405\\ \hline
15 & 304306698 & 163198260 & 27675571 & 1439060 &\gray 4822 \\ \hline
16 & 647838516 & 402149484 & 84174255 & 6427680 &\gray 86316\\ \hline
17 & 1326076500 & 937334100 & 234701775 & 24298824&\gray 667842\\ \hline
18 & 2620208300 & 2080498556 & 608157605 & 80728712 &3676190\\ \hline
19 & 5014575000 & 4421930724 & 1479876465 & 241765632&16140585\\ \hline
20 & 9322330905 & 9041293260 & 3409833831 & 664574400&60064605\\ \hline
\end{tabular}
\medskip
\caption{K-theoretic Donaldson numbers of classes $(-3k+m)\alpha - k\beta$}\label{tbl:Donaldsonnumber}
\end{table}

\begin{remark}\label{rem:conjfor4}
If $k = 0$, we have a closed formula
\begin{equation}\label{eqn:Donaldsonpoly}
	\chi(\rM,m\alpha) = {m+11 \choose m}.
\end{equation}
Note that $\chi(\rM,m\alpha)$ is a polynomial of degree at most 17, so to prove \eqref{eqn:Donaldsonpoly}, it is sufficient to compare the values when $0 \le m \le 17$.
\end{remark}

\begin{conjecture}
Let $\alpha_{d} := \pi^{*}\cO_{|\cO_{\PP^{2}}(d)|}(1)$ for the Fitting map $\pi : \rM(d, 1) \to |\cO_{\PP^{2}}(d)|$. Then
\[
	\chi(\rM(d,1), m\alpha_{d}) = {m+3d-1 \choose m}
	= {m+\dim |\cO_{\PP^{2}}(d)|-g \choose m},
\]
where $g$ is the dimension of a general fiber of $\pi$. 
\end{conjecture}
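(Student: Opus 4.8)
The plan is to reduce the computation of $\chi(\rM(d,1), m\alpha_d)$ to the determination of a single class in the Grothendieck group of the base $B := |\cO_{\PP^2}(d)| \cong \PP^{N_d}$, where $N_d = \tfrac{d(d+3)}{2}$. Since $\alpha_d = \pi^*\cO_B(1)$ is pulled back from $B$, the projection formula gives
\[
	\chi(\rM(d,1), m\alpha_d) = \chi\big(B,\; \cO_B(m)\otimes R\pi_*\cO_{\rM(d,1)}\big),
\]
so the left-hand side is a polynomial in $m$ of degree at most $N_d$ depending only on the class $[R\pi_*\cO_{\rM(d,1)}] \in K(B)$. Because the Euler form on $K(\PP^{N_d})$ is non-degenerate, the values $\chi(\cO_B(m)\otimes\xi)$ determine $\xi$, so the target identity $\chi(\rM(d,1), m\alpha_d) = \binom{m+3d-1}{m}$ is \emph{equivalent} to the K-theoretic statement
\[
	[R\pi_*\cO_{\rM(d,1)}] = [\cO_\Lambda] \in K(B), \qquad \Lambda \cong \PP^{3d-1}\subset B \text{ linear}.
\]
Here $N_d - (3d-1) = \tfrac{(d-1)(d-2)}{2} = g$, the arithmetic genus of a smooth degree $d$ curve, which is exactly the dimension of a general fibre of $\pi$ (a $\mathrm{Pic}^{g}$); this matches the shape $\dim B - g$ in the statement. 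Thus the entire problem is to compute $R\pi_*\cO_{\rM(d,1)}$.

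First I would analyse $\pi$ over the open locus $U\subset B$ of smooth curves, where $\pi$ is a smooth family whose fibres are torsors under abelian $g$-folds. There $R\pi_*\cO_{\rM}|_U \cong \lambda_{-1}(\cH) = \sum_q (-1)^q \wedge^q \cH$ with $\cH := R^1\pi_*\cO_\rM|_U$ the rank $g$ Hodge bundle, so $\mathrm{ch}(R\pi_*\cO_\rM)|_U$ is concentrated in codimension $\ge g$ with leading term $(-1)^g c_g(\cH)$; this already forces the polynomial to have degree $\le 3d-1$ and pins down its top coefficient. Equivalently, using Proposition \ref{prop:stablepairs}(3), over $U$ one may replace $\rM$ by $\rM^\infty=\mathrm{Hilb}^g(\cC/B)$ for the universal curve $\cC\subset\PP^2\times B$, which over $U$ is the relative symmetric product $\mathrm{Sym}^{g}_{\cC/B}$, and apply Macdonald's description of the cohomology of symmetric products. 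Since $\alpha_d$ is pulled back from $B$ it is unchanged by the flips and divisorial contractions relating $\rM^\infty$ and $\rM$, and I would verify that these birational modifications, whose centres lie over proper closed subsets of $B$, do not alter the Euler characteristic $\chi(\cdot,\pi^*\cO_B(m))$.

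The essential step, and the \textbf{main obstacle}, is to upgrade this generic picture to the global identity over \emph{all} of $B$, including the discriminant of singular, reducible, and non-reduced curves, where the compactified Jacobian fibres degenerate and the sheaves $R^q\pi_*\cO$ may fail to be locally free. I would attack this through relative Serre duality combined with the autoduality of the compactified Jacobian fibration, which should constrain $R\pi_*\cO_\rM$ to be self-dual up to a twist and a shift and thereby rigidify the discriminant contributions; an alternative is to degenerate $\cC\to B$ to a controlled (nodal or totally degenerate) family and invoke the decomposition-theorem/perverse description of Jacobian fibrations of planar curves. A clean model for this structure is $d=3$: there $\rM(3,1)=\cC_3$ is the incidence divisor of bidegree $(d,1)$ in $\PP^2\times B$, and resolving $0\to\cO(-3,-1)\to\cO\to\cO_{\cC_3}\to 0$ followed by $R\pi_*$ yields $[R\pi_*\cO_{\cC_3}]=[\cO_B]-[\cO_B(-1)]=[\cO_H]$ for a hyperplane $H$, hence $\chi=\binom{m+9}{9}-\binom{m+8}{9}=\binom{m+8}{8}$, exactly the predicted value.

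The cases $d\le 2$ (where $\pi$ is an isomorphism and $3d-1=N_d$) and $d=4$ (via Proposition \ref{prop:Chernclass} and the interpolation of Remark \ref{rem:conjfor4}) serve as further base checks. Indeed, once the structural degree bound $\deg_m\chi(\rM(d,1),m\alpha_d)\le 3d-1$ is established, each fixed $d$ can in principle be closed by comparing only $3d$ values of $m$ rather than $N_d+1$; but a proof uniform in $d$ genuinely requires the global K-theoretic identity $[R\pi_*\cO_{\rM(d,1)}]=[\cO_\Lambda]$, and carrying out the duality or degeneration analysis over the discriminant is where the real work lies.
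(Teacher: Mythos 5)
You should first be aware that this statement is posed as a \emph{conjecture} in the paper: the authors give no proof for general $d$, only a verification for $d\le 4$ ($d\le 2$ because $\rM(d,1)\cong|\cO_{\PP^2}(d)|$ with $3d-1=\frac{d(d+3)}{2}$ there, $d=3$ via the universal cubic, and $d=4$ by combining the total Chern class of Proposition \ref{prop:Chernclass} with Hirzebruch--Riemann--Roch and the finite interpolation of Remark \ref{rem:conjfor4}). So there is nothing in the paper to compare your argument against, and your proposal must stand on its own. Its solid parts are genuinely valuable: the projection-formula reduction showing the conjecture is \emph{equivalent} to $[R\pi_*\cO_{\rM(d,1)}]=[\cO_{\PP^{3d-1}}]$ in $\mathrm{K}(|\cO_{\PP^2}(d)|)$ is correct (the classes $[\cO_B(-i)]$ are a basis of $\mathrm{K}(\PP^{N_d})$ and the Hilbert polynomial determines the class), the identity $N_d-(3d-1)=g$ is right, and your $d=3$ computation from the Koszul resolution of the bidegree $(3,1)$ incidence divisor correctly reproduces $\binom{m+8}{8}$.

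Beyond the gap you yourself flag (the analysis over the discriminant), there is a second, unacknowledged gap that invalidates your intermediate claims. You assert that the smooth-fibre computation $R\pi_*\cO_\rM|_U\cong\lambda_{-1}(\cH)$ ``already forces the polynomial to have degree $\le 3d-1$ and pins down its top coefficient,'' and later that this bound would let one close each fixed $d$ by checking only $3d$ values of $m$. This does not follow. The base is $B=\PP^{N_d}$ and $U$ is the complement of the discriminant hypersurface $D$; since $[D]$ is a positive multiple of the hyperplane class, every positive-codimension cycle class on $B$ is supported on $D$, i.e.\ $\rA^{j}(U)=0$ for all $j\ge 1$. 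Hence the statement that $\mathrm{ch}(R\pi_*\cO_\rM)$ restricted to $U$ is concentrated in codimension $\ge g$ carries no information about $\mathrm{ch}_j$ on $B$ for $1\le j\le g-1$: it only gives $\mathrm{ch}_0=0$, i.e.\ degree $\le N_d-1$. The degree bound $\deg_m\le 3d-1$ is therefore exactly as hard as controlling the contributions supported on $D$ --- which is precisely the part you defer to ``duality or degeneration analysis.'' In short: the K-theoretic reformulation and the checks for $d\le 3$ are correct and consistent with the paper's evidence, but nothing in the proposal gets past codimension one in the base, no shortcut via a degree bound is available, and the conjecture remains open for $d\ge 5$.
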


This conjecture is true for $d \le 4$.


\bibliographystyle{alpha}

\begin{thebibliography}{ABCH13}

\bibitem[ABCH13]{ABCH13}
Daniele Arcara, Aaron Bertram, Izzet Coskun, and Jack Huizenga.
\newblock The minimal model program for the {H}ilbert scheme of points on
  {$\Bbb{P}^2$} and {B}ridgeland stability.
\newblock {\em Adv. Math.}, 235:580--626, 2013.

\bibitem[AV90]{AV90}
D.~Avritzer and I.~Vainsencher.
\newblock {${\rm Hilb}^4{\bf P}^2$}.
\newblock In {\em Enumerative geometry ({S}itges, 1987)}, volume 1436 of {\em
  Lecture Notes in Math.}, pages 30--59. Springer, Berlin, 1990.

\bibitem[Bel08]{Bel08a}
Prakash Belkale.
\newblock The strange duality conjecture for generic curves.
\newblock {\em J. Amer. Math. Soc.}, 21(1):235--258 (electronic), 2008.

\bibitem[BMW14]{BMW14}
Aaron Bertram, Cristian Martinez, and Jie Wang.
\newblock The birational geometry of moduli spaces of sheaves on the projective
  plane.
\newblock {\em Geom. Dedicata}, 173:37--64, 2014.

\bibitem[CC12]{CC12}
Jinwon Choi and Kiryong Chung.
\newblock Moduli spaces of $α$-stable pairs and wall-crossing on
  $\mathbb{P}^2$.
\newblock arXiv:1210.2499. to appear J. Math. Soc. Jap., 2012.

\bibitem[CC15]{CC15}
Jinwon Choi and Kiryong Chung.
\newblock The geometry of the moduli space of one-dimensional sheaves.
\newblock {\em Sci. China Math.}, 58(3):487--500, 2015.

\bibitem[CCK05]{CCK05}
Insong Choe, Jaeyoo Choy, and Young-Hoon Kiem.
\newblock Cohomology of the moduli space of {H}ecke cycles.
\newblock {\em Topology}, 44(3):585--608, 2005.

\bibitem[CHW14]{CHW14}
Izzet Coskun, Jack Huizenga, and Matthew Woolf.
\newblock The effective cone of the moduli space of sheaves on the plane.
\newblock arXiv:1401.1613, 2014.

\bibitem[CK11]{CK11}
Kiryong Chung and Young-Hoon Kiem.
\newblock Hilbert scheme of rational cubic curves via stable maps.
\newblock {\em Amer. J. Math.}, 133(3):797--834, 2011.

\bibitem[CKK14]{CKK14}
Jinwon Choi, Sheldon Katz, and Albrecht Klemm.
\newblock The refined {BPS} index from stable pair invariants.
\newblock {\em Comm. Math. Phys.}, 328(3):903--954, 2014.

\bibitem[Dan00]{Dan00}
Gentiana Danila.
\newblock Sections du fibr{\'e} d{\'e}terminant sur l'espace de modules des
  faisceaux semi-stables de rang 2 sur le plan projectif.
\newblock {\em Ann. Inst. Fourier (Grenoble)}, 50(5):1323--1374, 2000.

\bibitem[Dan02]{Dan02}
Gentiana Danila.
\newblock R{\'e}sultats sur la conjecture de dualit{\'e} {\'e}trange sur le
  plan projectif.
\newblock {\em Bull. Soc. Math. France}, 130(1):1--33, 2002.

\bibitem[DM11]{DM11}
Jean-Marc Dr{{\'e}}zet and Mario Maican.
\newblock On the geometry of the moduli spaces of semi-stable sheaves supported
  on plane quartics.
\newblock {\em Geom. Dedicata}, 152:17--49, 2011.

\bibitem[ELB89]{ELB89}
G.~Elencwajg and P.~Le~Barz.
\newblock L'anneau de {C}how des triangles du plan.
\newblock {\em Compositio Math.}, 71(1):85--119, 1989.

\bibitem[ES89]{ES89}
Geir Ellingsrud and Stein~Arild Str{\o}mme.
\newblock On the {C}how ring of a geometric quotient.
\newblock {\em Ann. of Math. (2)}, 130(1):159--187, 1989.

\bibitem[FGI{\etalchar{+}}05]{FGetal05}
Barbara Fantechi, Lothar G{{\"o}}ttsche, Luc Illusie, Steven~L. Kleiman, Nitin
  Nitsure, and Angelo Vistoli.
\newblock {\em Fundamental algebraic geometry}, volume 123 of {\em Mathematical
  Surveys and Monographs}.
\newblock American Mathematical Society, Providence, RI, 2005.
\newblock Grothendieck's FGA explained.

\bibitem[FN72]{FN71}
Akira Fujiki and Shigeo Nakano.
\newblock Supplement to ``{O}n the inverse of monoidal transformation''.
\newblock {\em Publ. Res. Inst. Math. Sci.}, 7:637--644, 1971/72.

\bibitem[Fra15]{Fra15}
H.~Franzen.
\newblock Chow rings of fine quiver moduli are tautologically presented.
\newblock {\em Math. Z.}, 279(3-4):1197--1223, 2015.

\bibitem[Ful98]{Ful98}
William Fulton.
\newblock {\em Intersection theory}, volume~2 of {\em Ergebnisse der Mathematik
  und ihrer Grenzgebiete. 3. Folge. A Series of Modern Surveys in Mathematics
  [Results in Mathematics and Related Areas. 3rd Series. A Series of Modern
  Surveys in Mathematics]}.
\newblock Springer-Verlag, Berlin, second edition, 1998.

\bibitem[GNY09]{GNY09}
Lothar G{{\"o}}ttsche, Hiraku Nakajima, and K{\=o}ta Yoshioka.
\newblock {$K$}-theoretic {D}onaldson invariants via instanton counting.
\newblock {\em Pure Appl. Math. Q.}, 5(3, Special Issue: In honor of Friedrich
  Hirzebruch. Part 2):1029--1111, 2009.

\bibitem[GS]{M2}
Daniel~R. Grayson and Michael~E. Stillman.
\newblock Macaulay2, a software system for research in algebraic geometry.
\newblock Available at http://www.math.uiuc.edu/Macaulay2/.

\bibitem[Har77]{Har77}
Robin Hartshorne.
\newblock {\em Algebraic geometry}.
\newblock Springer-Verlag, New York, 1977.
\newblock Graduate Texts in Mathematics, No. 52.

\bibitem[He98]{He98}
Min He.
\newblock Espaces de modules de syst{\`e}mes coh{\'e}rents.
\newblock {\em Internat. J. Math.}, 9(5):545--598, 1998.

\bibitem[HL10]{HuLe10}
Daniel Huybrechts and Manfred Lehn.
\newblock {\em The geometry of moduli spaces of sheaves}.
\newblock Cambridge Mathematical Library. Cambridge University Press,
  Cambridge, second edition, 2010.

\bibitem[HST01]{HST01}
Shinobu Hosono, Masa-Hiko Saito, and Atsushi Takahashi.
\newblock Relative {L}efschetz action and {BPS} state counting.
\newblock {\em Internat. Math. Res. Notices}, (15):783--816, 2001.

\bibitem[Kat08]{Kat08}
Sheldon Katz.
\newblock Genus zero {G}opakumar-{V}afa invariants of contractible curves.
\newblock {\em J. Differential Geom.}, 79(2):185--195, 2008.

\bibitem[Kee92]{Kee92}
Sean Keel.
\newblock Intersection theory of moduli space of stable {$n$}-pointed curves of
  genus zero.
\newblock {\em Trans. Amer. Math. Soc.}, 330(2):545--574, 1992.

\bibitem[Kin94]{Kin94}
A.~D. King.
\newblock Moduli of representations of finite-dimensional algebras.
\newblock {\em Quart. J. Math. Oxford Ser. (2)}, 45(180):515--530, 1994.

\bibitem[Kir84]{Kir84}
Frances~Clare Kirwan.
\newblock {\em Cohomology of quotients in symplectic and algebraic geometry},
  volume~31 of {\em Mathematical Notes}.
\newblock Princeton University Press, Princeton, NJ, 1984.

\bibitem[KL12]{KL12}
Young-Hoon Kiem and Jun Li.
\newblock Categorification of donaldson-thomas invariants via perverse sheaves.
\newblock arXiv:1212.6444, 2012.

\bibitem[Liu15]{Liu15}
Wanmin Liu.
\newblock Bayer-macr{\`\i} decomposition on bridgeland moduli spaces over
  surfaces.
\newblock arXiv:1501.06397, 2015.

\bibitem[Lo13]{Lo13}
Jason Lo.
\newblock Moduli of {PT}-semistable objects {II}.
\newblock {\em Trans. Amer. Math. Soc.}, 365(9):4539--4573, 2013.

\bibitem[LP93a]{LP93a}
J.~Le~Potier.
\newblock Faisceaux semi-stables de dimension {$1$} sur le plan projectif.
\newblock {\em Rev. Roumaine Math. Pures Appl.}, 38(7-8):635--678, 1993.

\bibitem[LP93b]{LP93b}
Joseph Le~Potier.
\newblock Syst{\`e}mes coh{\'e}rents et structures de niveau.
\newblock {\em Ast{\'e}risque}, (214):143, 1993.

\bibitem[LQZ03]{LQZ03}
Wei-Ping Li, Zhenbo Qin, and Qi~Zhang.
\newblock Curves in the {H}ilbert schemes of points on surfaces.
\newblock In {\em Vector bundles and representation theory ({C}olumbia, {MO},
  2002)}, volume 322 of {\em Contemp. Math.}, pages 89--96. Amer. Math. Soc.,
  Providence, RI, 2003.

\bibitem[Mar07]{Mar07}
Eyal Markman.
\newblock Integral generators for the cohomology ring of moduli spaces of
  sheaves over {P}oisson surfaces.
\newblock {\em Adv. Math.}, 208(2):622--646, 2007.

\bibitem[MO07]{MO07}
Alina Marian and Dragos Oprea.
\newblock The level-rank duality for non-abelian theta functions.
\newblock {\em Invent. Math.}, 168(2):225--247, 2007.

\bibitem[PT09]{PT09}
R.~Pandharipande and R.~P. Thomas.
\newblock Curve counting via stable pairs in the derived category.
\newblock {\em Invent. Math.}, 178(2):407--447, 2009.

\bibitem[Sim94]{Sim94}
Carlos~T. Simpson.
\newblock Moduli of representations of the fundamental group of a smooth
  projective variety. {I}.
\newblock {\em Inst. Hautes {\'E}tudes Sci. Publ. Math.}, (79):47--129, 1994.

\bibitem[Tjo98]{Tjo98}
Erik~N. Tjotta.
\newblock Rational curves on the space of determinantal nets of conics.
\newblock arXiv:9802037, 1998.

\bibitem[Woo13]{Woo13}
Matthew Woolf.
\newblock Nef and effective cones on the moduli space of torsion sheaves on the
  projective plane.
\newblock arXiv:1305.1465, 2013.

\bibitem[Yua12]{Yua12b}
Yao Yuan.
\newblock Determinant line bundles on moduli spaces of pure sheaves on rational
  surfaces and strange duality.
\newblock {\em Asian J. Math.}, 16(3):451--478, 2012.

\end{thebibliography}
\newcommand{\etalchar}[1]{$^{#1}$}

\end{document}